\newtheorem{thm}{Theorem}[section]
\newtheorem{corollary}[thm]{Corollary}
\newtheorem{lemma}[thm]{Lemma}
\newtheorem{proposition}[thm]{Proposition}
\theoremstyle{definition}
\newtheorem{definition}{Definition}
\newtheorem{example}{Example}
\theoremstyle{remark}
\DeclareMathOperator{\Hom}{Hom}
\DeclareMathOperator{\Ker}{Ker}
\DeclareMathOperator{\wgdim}{w.gl.dim}
\DeclareMathOperator{\gdim}{gl.dim}
\DeclareMathOperator{\wdim}{w.dim}
\DeclareMathOperator{\pdim}{p.dim}
\DeclareMathOperator{\Ann}{Ann}
\DeclareMathOperator{\Nil}{Nil}
\DeclareMathOperator{\Ze}{Z}
\DeclareMathOperator{\m}{\frak{m}}
\DeclareMathOperator{\End}{End}
\newcommand{\field}[1]{\mathbb{#1}}
\newcommand{\Z}{\field{Z}}
\begin{document}

\title[Weak global dimension of Pr\"ufer-like rings]{Weak global dimension of Pr\"ufer-like rings}

\author{K. Adarbeh}
\address{Department of Mathematics and Statistics, KFUPM, Dhahran 31261, KSA}
\email{khalidwa@kfupm.edu.sa}

\author[S. Kabbaj]{S. Kabbaj $^{(1)}$}\thanks{$^{(1)}$ Corresponding author.}
\address{Department of Mathematics and Statistics, KFUPM, Dhahran 31261, KSA}
\email{kabbaj@kfupm.edu.sa}

\date{\today}

\subjclass[2010]{13F05, 13B05, 13C13, 16D40, 16B50, 16D90}

\keywords{Pr\"ufer domain, arithmetical ring, chained ring, fqp-ring, Gaussian ring, Pr\"ufer ring, semihereditary ring, quasi-projective module, weak global dimension, trivial ring extension.}

\begin{abstract}
In 1969, Osofsky proved that a chained ring (i.e., local arithmetical ring) with zero divisors has infinite weak global dimension; that is, the weak global dimension of an arithmetical ring is 0, 1, or $\infty$. In 2007, Bazzoni and Glaz studied the homological aspects of Pr\"ufer-like rings, with a focus on Gaussian rings. They proved that Osofsky's aforementioned result is valid in the context of coherent Gaussian rings (and, more generally, in coherent Pr\"ufer rings). They closed their paper with a conjecture sustaining that ``the weak global dimension of a Gaussian ring is 0, 1, or $\infty$." In 2010, the authors of \cite{BKM} provided an example of a Gaussian ring which is neither arithmetical nor coherent and has an infinite weak global dimension. In 2011, the authors of \cite{AJK} introduced and investigated the new class of fqp-rings which stands strictly between the two classes of arithmetical rings and Gaussian rings. Then, they proved the Bazzoni-Glaz conjecture for fqp-rings.

This paper surveys a few recent works in the literature on the weak global dimension of Pr\"ufer-like rings making this topic accessible and appealing to a broad audience. As a prelude to this, the first section of this paper provides full details for Osofsky's proof of the existence of a module with infinite projective dimension on a chained ring. Numerous examples -arising as trivial ring extensions- are provided to illustrate the concepts and results involved in this paper.
\end{abstract}
\maketitle


\begin{section}{Introduction}\label{I}

\noindent All rings considered in this paper are commutative with identity
element and all modules are unital. Let $R$ be a ring and $M$ an
$R$-module. The weak (or flat) dimension (resp., projective
dimension) of $M$, denoted $\wdim_{R}(M)$ (resp., $\pdim_{R}(M)$),
measures how far $M$ is from being a flat (resp., projective)
module. It is defined as follows: Let $n$ be an integer $\geq0$.
We have $\wdim_{R}(M)\leq n$ (resp., $\pdim_{R}(M)\leq n$) if there is a
flat (resp., projective) resolution $$0\rightarrow E_{n}
\rightarrow E_{n-1} \rightarrow ... \rightarrow E_{1} \rightarrow
E_{0} \rightarrow M \rightarrow 0.$$ If $n$ is the
least such integer, $\wdim_{R}(M)=n$ (resp., $\pdim_{R}(M)=n$). If no such resolution exists,
$\wdim_{R}(M)=\infty$ (resp., $\pdim_{R}(M)=\infty$). The weak
global dimension (resp., global dimension) of $R$, denoted by
$\wgdim(R)$ (resp., $\gdim(R)$), is the supremum of $\wdim_{R}(M)$
(resp., $\pdim_{R}(M)$), where $M$ ranges over all (finitely
generated) R-modules. For more details on all these notions, we refer the
reader to \cite{B,G1,Ro}.

A ring $R$ is called coherent if every finitely
generated ideal of $R$ is finitely presented; equivalently, if
$(0:a)$ and $I \cap J$ are finitely generated for every $a \in R$
and any two finitely generated ideals $I$ and $J$ of $R$ \cite{G1}. Examples
of coherent rings are Noetherian rings, Boolean algebras, von
Neumann regular rings, and semihereditary rings.

Gaussian rings belong to the class of Pr\"ufer-like rings which has recently received much attention
from commutative ring theorists. A ring $R$ is called Gaussian if
for every $f, g \in R[X]$, one has the content ideal equation
$c(fg) = c(f)c(g)$ where $c(f)$, the content of $f$, is the ideal
of $R$  generated by the coefficients of $f$ \cite{T}. The ring
$R$ is said to be a chained ring (or valuation ring) if its
lattice of ideals is totally ordered by inclusion; and $R$ is
called arithmetical if $R_{\m}$ is a chained ring for each maximal
ideal $m$ of $R$ \cite{Fu,J}. Also $R$ is called semihereditary if
every finitely generated ideal of $R$ is projective \cite{CE}; and
$R$ is Pr\"ufer if every finitely generated regular ideal of $R$
is projective \cite{BS,Gr}. In the domain context, all these
notions coincide with the concept of Pr\"ufer domain. Glaz, in
\cite{G3}, constructs examples which show that all these notions
are distinct in the context of arbitrary rings. More examples, in
this regard, are provided via trivial ring extensions \cite{AJK,BKM}.

The following diagram of implications puts the notion of Gaussian
ring in perspective within the family of Pr\"ufer-like rings
\cite{BG,BG2,AJK}:\bigskip

\begin{center}
Semihereditary ring\\
$\Downarrow$\\
Ring with weak global dimension $\leq 1$\\
$\Downarrow$\\
Arithmetical ring\\
$\Downarrow$ \\
fqp-Ring\\
$\Downarrow$\\
Gaussian ring\\
$\Downarrow$\\
Pr\"ufer ring
\end{center}

In 1969, Osofsky proved that a local arithmetical ring (i.e.,
chained ring) with zero divisors has infinite weak global
dimension \cite{Os}. In view of \cite[Corollary 4.2.6]{G1}, this
results asserts that the weak global dimension of an arithmetical
ring is 0, 1, or $\infty$.

In 2007, Bazzoni and Glaz proved that if $R$ is a coherent
Pr\"ufer ring (and, a fortiori, a Gaussian ring), then $\wgdim(R)$
= 0, 1, or $\infty$ \cite[Proposition 6.1]{BG2}. And also they
proved that if $R$ is a Gaussian ring admitting a maximal ideal
$\m$ such that the nilradical of the localization $R_{m}$ is a
nonzero nilpotent ideal. Then $\wgdim(R)$ = $\infty$ \cite[Theorem 6.4]{BG2}. At the end of the paper, they conjectured that ``the weak global dimension of a Gaussian ring is 0, 1, or
$\infty$" \cite{BG2}. In two preprints \cite{DT1,DT2}, Donadze and Thomas claim to prove this conjecture (see the end of Section \ref{G}).

In 2010, the authors of \cite{BKM} proved that if $(A,\m)$ is a
local ring, $E$ is a nonzero $\frac{A}{\m}$-vector space, and
$R:=A\ltimes~E$ is the trivial extension of $A$ by $E$, then:
\begin{itemize}
\item $R$ is a total ring of quotients and hence a Pr\"ufer ring.
\item $R$ is Gaussian if and only if $A$ is Gaussian.
\item $R$ is arithmetical if and only if $A:=K$ is a field and $\dim_{K}E=1$.
\item $\wgdim(R)\gvertneqq 1$. If, in addition, $\m$ admits a minimal generating set, then\break $\wgdim(R)=\infty$.
\end{itemize}
As an application, they provided an example of a Gaussian ring
which is neither arithmetical nor coherent and has an infinite
weak global dimension \cite[Example 2.7]{BKM}; which widened the
scope of validity of the above conjecture beyond the class of
coherent Gaussian rings.

In 2011, the authors of \cite{AJK} investigated the correlation
of fqp-rings with well-known Pr\"ufer conditions; namely, they
proved that the class of fqp-rings stands between the two classes
of arithmetical rings and Gaussian rings \cite[Theorem 3.1]{AJK}.
They also examined the transfer of the fqp-property to trivial
ring extensions in order to build original examples of fqp-rings.
Also they generalized Osofsky's result (mentioned above) and
extended Bazzoni-Glaz's result on coherent Gaussian rings by
proving that the weak global dimension of an fqp-ring is equal to
0, 1, or  $\infty$ \cite[Theorem 3.11]{AJK}; and then they
provided an example of an fqp-ring that is neither arithmetical
nor coherent \cite[Example 3.9]{AJK}.

Recently, several papers have appeared in the literature investigating the weak global dimension of various settings subject to Pr\"ufer conditions.  This survey paper plans to track and study these works dealing with this topic from the very origin; that is, 1969 Osofsky's proof of the existence of a module with infinite projective dimension on a local arithmetical ring. Precisely, we will examine all main results published in \cite{AJK,BKM,BG2,G2,Os}.

Our goal is to make this topic accessible and appealing to a broad audience; including graduate students. For this purpose, we present complete proofs of all main results via ample details and simplified arguments along with exact references. Further, numerous examples -arising as trivial ring extensions- are provided to illustrate the concepts and results involved in this paper. We assume familiarity with the basic tools used in the homological aspects of commutative ring theory, and any unreferenced material is standard as in \cite{AM,B,CE,G1,H,L,Ro,V}.
\end{section}

\begin{section}{Weak global dimension of arithmetical rings}\label{A}

\noindent In this section, we provide a detailed proof for Osofsky's Theorem
that the weak global dimension of an arithmetical ring with zero
divisors is infinite. In fact, this result enables one to state
that the weak global dimension of an arithmetical ring is $0$,
$1$, or $\infty$. We start by recalling some basic definitions.

\begin{definition}\rm \label{A1} Let $R$ be a ring and $M$ an $R$-module. Then:

\begin{enumerate}
\item The weak dimension of $M$, denoted by $\wdim(M)$, measures
how far $M$ is from being flat. It is defined as follows: Let $n$
be a positive integer. We have $\wdim(M)\leq n$ if there is a flat
resolution
$$0\rightarrow E_{n} \rightarrow E_{n-1} \rightarrow ...
\rightarrow E_{1} \rightarrow E_{0} \rightarrow M \rightarrow 0.$$
If no such resolution exists, $\wdim(M)=\infty$; and if $n$ is the
least such integer, $\wdim(M)=n$.

\item The weak global dimension of $R$, denoted by $\wgdim(R)$, is
the supremum of $\wdim(M)$, where $M$ ranges over all (finitely
generated) R-modules.\end{enumerate}
\end{definition}

\begin{definition}\rm \label{A2} Let $R$ be a ring. Then:
\begin{enumerate}
\item $R$ is said to be a chained ring (or valuation ring) if its
lattice of ideals is totally ordered by inclusion.

\item $R$ is called an arithmetical ring if if $R_{\m}$ is a chained
ring for each maximal ideal $m$ of $R$. \end{enumerate}
\end{definition}

Fields and $\Z_{(p)}$, where $\Z$ is the ring of integers and $p$
is a prime number, are examples of chained rings. Also,
$\Z/n^{2}\Z$ is an arithmetical ring for any positive integer $n$.
For more examples, see {\cite {BKM}}. For a ring $R$, let $\Ze(R)$
denote the set of all zero divisors of $R$.

Next we give the main theorem of this section.

\begin{thm}\label{A3}
Let $R$ be an arithmetical ring. Then $\wgdim(R)=$ 0, 1, or
$\infty$.
\end{thm}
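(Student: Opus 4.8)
The plan is to reduce the statement to the local case and then invoke Osofsky's theorem. Recall first the standard fact (see \cite[Corollary 4.2.6]{G1}) that for any ring $R$ one has $\wgdim(R) = \sup\{\wgdim(R_{\m}) : \m \in \Max(R)\}$, since weak dimension localizes well (flatness is a local property and $\Tor$ commutes with localization). Thus it suffices to control $\wgdim(R_{\m})$ for each maximal ideal $\m$. By definition of an arithmetical ring, each such $R_{\m}$ is a chained ring, so the whole problem is reduced to showing that a chained ring has weak global dimension $0$, $1$, or $\infty$.

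Next I would split into the domain and non-domain cases for the local ring $V := R_{\m}$. If $V$ is a chained domain (i.e.\ a valuation domain), then $V$ is a Pr\"ufer domain, hence every finitely generated ideal is projective, hence flat, and a classical computation gives $\wgdim(V) \le 1$; so $\wgdim(V) \in \{0,1\}$ here, with $0$ occurring exactly when $V$ is a field. If instead $V$ has a nonzero zero divisor, this is precisely the hypothesis of Osofsky's theorem: a chained ring with zero divisors, and the conclusion there is that $\wgdim(V) = \infty$. Since this is the main theorem whose detailed proof occupies the rest of the section, I would at this point simply cite it (it is the result being set up), i.e.\ Osofsky \cite{Os}, which supplies a $V$-module of infinite weak dimension (in fact infinite projective dimension, which is stronger). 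Combining the three cases, $\wgdim(V) \in \{0,1,\infty\}$.

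Finally, assembling: taking the supremum over all $\m \in \Max(R)$ of quantities each lying in $\{0,1,\infty\}$ again yields a value in $\{0,1,\infty\}$, which is exactly the claim. The only genuine mathematical content — and the main obstacle — is Osofsky's theorem that a chained ring with zero divisors has infinite weak global dimension; everything else is bookkeeping via localization and the elementary valuation-domain estimate. Since the present theorem is stated precisely as the motivating consequence of that result, the proof here is essentially the observation that Theorem~\ref{A3} follows once Osofsky's theorem (proved in the remainder of this section) and the localization formula for $\wgdim$ are in hand.
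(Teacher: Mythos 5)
Your proposal is correct and follows essentially the same route as the paper: reduce to the local (chained) case via the localization formula for $\wgdim$, dispose of the valuation-domain case with the bound $\wgdim \leq 1$, and invoke Osofsky's theorem for chained rings with zero divisors. The only cosmetic difference is that the paper inserts one further reduction (localizing at $\Ze(R)$ so that $\Ze(R)=\m$) before stating Osofsky's theorem in the precise form it proves, but this is absorbed into your citation of that result.
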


To prove this theorem we make the following reductions:

\noindent (1) We may assume that $R$ is a chained ring since $\wgdim(R)$ is
the supremum of $\wgdim(R_{ \m})$ for all  maximal ideal $ \m$ of
$R$\ {\cite [Theorem 1.3.14 (1)]{G1}}.

\noindent (2) We may assume that $R$ is a chained ring with zero divisors.
Then we prove that $\wgdim(R)=\infty$ since, if $R$ is a valuation
domain, then  $\wgdim(R)\leq 1$ by {\cite [Corollary 4.2.6] {G1}}.

\noindent (3) Finally, we may assume that  $(R, \m)$ is a chained ring with
zero divisors such that $\Ze(R) =\m$,  since $\Ze(R)$ is a prime
ideal, $\Ze(R_{\Ze(R)})=\Ze(R)R_{\Ze(R)}$, and
\\ $\wgdim(R_{\Ze(R)}) \leq \wgdim(R)$.

So our task is reduced to prove the following theorem.

\begin{thm}[{\cite[Theorem]{Os}}]\label{A4}
Let $(R, \m)$ be a chained ring with zero divisors such that
$\Ze(R) =\m$. Then $\wgdim(R)=\infty$.
\end{thm}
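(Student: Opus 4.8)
The plan is to exhibit an explicit $R$-module whose weak dimension is infinite, following Osofsky's original construction. Since $R$ is a chained ring with $\Ze(R)=\m$, every element of $\m$ is a zero divisor, so for each $a\in\m$ the annihilator $(0:a)$ is nonzero; moreover, because the ideals are totally ordered, annihilators and principal ideals interact very rigidly. First I would fix a nonunit $0\ne a_0\in\m$ and produce, by induction, a sequence $(a_n)_{n\ge 0}$ together with a descending (or suitably nested) chain of principal ideals so that at each stage the relevant syzygy does not become flat. The key local fact to extract and prove up front is a structural lemma about chained rings: for $a,b\in R$ one has $a\mid b$ or $b\mid a$, hence $(0:a)$ is comparable to $(a)$, and a principal ideal $(a)$ is flat (equivalently $a$ is not a zero divisor) — so in our situation \emph{no} nonzero proper principal ideal is flat. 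This is the engine that prevents the flat resolution from terminating.

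The core of the argument is then to build an infinite flat (in fact free) resolution
\[
\cdots \longrightarrow R \xrightarrow{\ \cdot b_2\ } R \xrightarrow{\ \cdot b_1\ } R \longrightarrow M \longrightarrow 0
\]
for a carefully chosen module $M$ (a quotient $R/(a)$ or a colimit of such), where the connecting maps are multiplications by elements $b_i\in\m$ chosen so that $(0:b_i)=(b_{i+1})$ at every step. The existence of such a sequence is where the hypothesis $\Ze(R)=\m$ is used decisively: one must check that the chain condition on ideals lets us solve $(0:b_i)=(b_{i+1})$ with $b_{i+1}$ again a nonunit and nonzero, so the process never stabilizes at a projective/flat term. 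I would then show this resolution is non-split and minimal in the appropriate sense, so that computing $\Tor$ against $R/\m$ (or against a suitable cyclic module) yields a nonzero group in every homological degree, giving $\wdim_R(M)=\infty$ and hence $\wgdim(R)=\infty$.

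The main obstacle I anticipate is the bookkeeping needed to guarantee that the annihilator sequence genuinely does not terminate: a priori one could have $(0:b)=(b)$ or the chain of annihilators could stabilize, in which case the resolution would be finite and $M$ flat. Ruling this out requires exploiting the chained hypothesis carefully — comparing $(0:b)$ with $(b^2)$, $(b)$, etc., and using that $\Ze(R)=\m$ so that products of nonunits remain zero divisors. A secondary technical point is handling the case where $R$ is not Noetherian, so that $\m$ need not be finitely generated and principal ideals need not be closed under the operations one wants; here one passes to a localization or works with a well-chosen countable subfamily of elements of $\m$. Once the non-terminating annihilator chain is in hand, the rest — assembling the free resolution and reading off that each $\Tor_i^R(M, R/\m)\ne 0$ — is routine homological algebra.
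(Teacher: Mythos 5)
There is a genuine gap. Your plan is essentially the easy half of Osofsky's argument: when some annihilator $(0:x)$ is principal, say $(0:x)=yR$, one shows $(0:y)=xR$ and obtains the periodic free resolution $\cdots \to R\xrightarrow{\,\cdot y\,}R\xrightarrow{\,\cdot x\,}R\to xR\to 0$ with non-flat syzygies $xR$, $yR$ (non-flat because they are finitely presented, annihilated by a nonzero element, and finitely presented flat modules over a local ring are free). This is Lemma 2 of Osofsky and it works exactly as you describe. But your construction hinges on being able to ``solve $(0:b_i)=(b_{i+1})$'' at every stage, and in a non-Noetherian chained ring this equation may have \emph{no} solution: it is entirely possible that $(0:r)$ is infinitely generated (equivalently, non-cyclic) for \emph{every} nonzero $r\in\m$, in which case no multiplication map $R\xrightarrow{\,\cdot b\,}R$ has the right kernel and the entire ``resolution by multiplications'' scheme collapses. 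Your proposed fixes do not address this: localizing produces another chained ring with the same defect, and restricting to a countable subfamily of $\m$ does not make any annihilator principal. You have identified the wrong obstacle (stabilization of the chain) rather than the real one (non-principality of the annihilators).

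The paper handles precisely this case with the hardest part of the proof (Osofsky's Lemma 4, the paper's Lemma \ref{A9}): assuming $(0:r)$ is infinitely generated for all $0\neq r\in\m$, one takes $0\neq a\in I=\{x: x^2=0\}$, $b\in\m\setminus I$, a free module $N$ on two generators $y,y'$, and the module $M=(y-y'b)R+y(0:a)$. One then proves a structural decomposition $M=M_1\cup M_2\cup M_3$ (the union of cyclic modules on regular elements, a strictly increasing chain $yu_iR$, and a remainder), shows that the kernel $K$ of a suitable free cover of $M$ inherits the same two properties, and deduces via the finitely-generated flatness criterion that $M$ is not flat; iterating yields an infinite flat resolution none of whose syzygies is flat. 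Nothing in your outline substitutes for this construction, so as written the proof only covers the case where some annihilator is cyclic.
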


To prove this theorem we first prove the following lemmas.
Throughout, let $(R, \m)$ be a chained ring with $\Ze(R) =\m$, $M$
an $R$-module, $I=\{x \in R \mid x^{2}=0\}$, and for $x\in M$,
$(0:x)=\{y \in R \mid yx=0\}$. One can easily check that $I$ is a
nonzero ideal since $R$ is a chained ring with zero divisors.

\begin{lemma}[{\cite[Lemma 1]{Os}}] \label{A5} $I^{2}=0$, and for all $x\notin R$, $x\notin I$
$\Rightarrow$ $(0:x)\subseteq I$. \end{lemma}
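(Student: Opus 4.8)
The plan is to exploit the defining property of a chained ring: for any two elements $a,b\in R$ the principal ideals $(a)$ and $(b)$ are comparable, so either $a\in(b)$ or $b\in(a)$, i.e. $a=rb$ or $b=ra$ for some $r\in R$. Both assertions will follow by applying this dichotomy to suitable pairs and then squaring.

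For the first assertion, I would first verify that $I$ is an ideal and that the product of any two of its elements is $0$. Given $x,y\in I$, comparability of $(x)$ and $(y)$ lets us assume, after possibly interchanging $x$ and $y$, that $x=ry$ for some $r\in R$; then $xy=ry^{2}=0$ since $y^{2}=0$. Consequently $(x+y)^{2}=x^{2}+2xy+y^{2}=0$, so $x+y\in I$, and $(rx)^{2}=r^{2}x^{2}=0$, so $rx\in I$ for every $r\in R$. Thus $I$ is an ideal, and since every product of two of its elements vanishes, $I^{2}=0$.

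For the second assertion, let $x\in R$ with $x\notin I$, i.e. $x^{2}\neq 0$, and take $y\in(0:x)$, so $xy=0$; the goal is $y^{2}=0$. By comparability of $(x)$ and $(y)$ there are two cases. If $x=sy$ for some $s\in R$, then $x^{2}=s(xy)=0$, contradicting $x\notin I$, so this case is impossible. Hence $(y)\subseteq(x)$, say $y=rx$, and then $y^{2}=r(xy)=0$, so $y\in I$. Therefore $(0:x)\subseteq I$.

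There is essentially no serious obstacle here; the only points needing a little care are that the printed hypothesis ``$x\notin R$'' should be read as ``$x\in R$'' (a typo), and that, before invoking $I^{2}$, one should confirm that $I$ is genuinely an ideal, as done above. Note also that the standing hypothesis $\Ze(R)=\m$ is not used in this lemma; what does use the structure is the prior remark that a chained ring with zero divisors has $I\neq 0$, which has already been recorded.
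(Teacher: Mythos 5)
Your proof is correct and follows essentially the same route as the paper: both parts rest on comparability of principal ideals in a chained ring, deducing $ab\in a^{2}R$ or $b^{2}R$ for the first claim and ruling out $x\in yR$ to get $y\in xR$, hence $y^{2}=0$, for the second. The extra check that $I$ is an ideal is handled in the paper by a remark just before the lemma, and your reading of ``$x\notin R$'' as a typo for ``$x\in R$'' is right.
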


\begin{proof}
To prove that $I^{2}=0$, it suffices to prove that $ab=0$ for all
$a,b \in I$. So let $a, b \in I$. Then either $a\in bR$ or $b\in
aR$, so that $ab\in a^{2}R =0$ or $ab\in b^{2}R =0$.

 Now let $x\in R\setminus I$ and  $y\in (0:x)$. Then either $x\in yR$ or $y\in xR$. But $x\in yR$ implies that $x^{2}\in
xyR =0$, absurd. Therefore $y\in xR$, so that $y^{2}\in xyR=0$.
Hence $y\in I$. \end{proof}

\begin{lemma}[{\cite[Lemma 2]{Os}}] \label{A6} Let $0\neq x\in \Ze(R)$ such that $(0:x)=yR$. Then\\
$\wgdim(R)=\infty$.\end{lemma}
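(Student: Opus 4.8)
The plan is to produce one module of infinite flat dimension, namely $R/xR$, which immediately gives $\wgdim(R)=\infty$. The mechanism is a two-periodic free resolution of $R/xR$, so the real work is ideal-theoretic: I first record that since $0\neq x\in\Ze(R)$ and $(0:x)=yR$, we must have $y\neq 0$ (otherwise $x$ would be regular) and $y\in\m$ (otherwise $(0:x)=R$ and $x=0$); moreover $xy=0$ yields the trivial inclusion $xR\subseteq(0:y)$.

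The key step --- and the one I expect to be the main obstacle --- is to upgrade this to the equality $(0:y)=xR$; that is, the annihilator of the generator of $(0:x)$ is again principal, generated by $x$. I would argue by contradiction. If $xR\subsetneq(0:y)$, choose $z\in(0:y)\setminus xR$; since $R$ is chained, $z\notin xR$ forces $xR\subsetneq zR$, so $x=zw$ with $w$ a non-unit, hence $w\in\m$. Because $x\in zR$ we get $(0:z)\subseteq(0:x)=yR$, and $zy=0$ gives $yR\subseteq(0:z)$, so $(0:z)=yR$. Now compare $w$ and $y$ in the totally ordered ideal lattice. If $y\in wR$, write $y=we$; then $0=zy=(zw)e=xe$, so $e\in(0:x)=yR$, say $e=yf$, and $y=we=(wf)y$ forces $y(1-wf)=0$ with $1-wf$ a unit, whence $y=0$ --- impossible. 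Thus $w\in yR$, say $w=yg$; but then $x=zw=(zy)g=0$ --- also impossible. Hence $(0:y)=xR$. (One could instead try to funnel this through Lemma~\ref{A5}, but the direct comparison above looks shortest.)

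Armed with $(0:x)=yR$ and $(0:y)=xR$, I would then write down the free resolution
\[
\cdots\longrightarrow R\stackrel{y}{\longrightarrow}R\stackrel{x}{\longrightarrow}R\stackrel{y}{\longrightarrow}R\stackrel{x}{\longrightarrow}R\longrightarrow R/xR\longrightarrow 0,
\]
whose maps are alternately multiplication by $x$ and by $y$: exactness holds because $\Ima(\cdot x)=xR=\Ker(R\to R/xR)$, $\Ker(\cdot x)=(0:x)=yR=\Ima(\cdot y)$, and $\Ker(\cdot y)=(0:y)=xR=\Ima(\cdot x)$, after which the pattern repeats. Tensoring with $R/\m$ annihilates every differential, since $x,y\in\m$, so $\Tor_n^R(R/xR,R/\m)\cong R/\m\neq 0$ for all $n\geq 0$. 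As a module of finite flat dimension $n$ would have $\Tor_{n+1}^R(-,N)$ identically zero, this forces $\wdim_R(R/xR)=\infty$, hence $\wgdim(R)=\infty$. (Note that already $n=1$ exhibits $R/xR$ as non-flat; the genuine content of the lemma is the principality $(0:y)=xR$ that supplies the second half of the periodicity.)
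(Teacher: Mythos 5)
Your proof is correct and follows essentially the same route as the paper: the heart in both cases is the chained-ring comparability argument establishing $(0:y)=xR$ (your case analysis on $w$ versus $y$ is a minor repackaging of the paper's argument with $j$ and $k$, which gets $y\in jR$ directly from $(0:z)\subseteq jR$), followed by the same two-periodic free resolution. The only difference is the endgame: the paper certifies that the syzygies $xR$ and $yR$ are not flat via ``finitely presented flat $\Rightarrow$ projective $\Rightarrow$ free over a local ring, yet annihilated by a nonzero element of $\m$,'' whereas you compute $\Tor_n^R(R/xR,R/\m)\cong R/\m\neq 0$ for all $n$ directly from the periodic resolution; both are standard and equally valid.
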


\begin{proof}
We first prove that $(0:y)=xR$. The inclusion $(0:y)$$\supseteq$
$xR$ is trivial since $xy=0$. Now to prove the other inclusion let
$z\in (0:y)$. Then either $z=xr$ for some $r\in R$ and in this
case we are done, or $x=zj$ for some $j\in R$. We may assume $j\in
\m$. Otherwise, $j$ is a unit and then we return to the first
case. Since $x\neq 0$, $j\notin (0:z)$, so $jR\nsubseteq (0:z)$
which implies $(0:z) \subseteq jR$, and hence  $y=jk$ for some $k
\in \m$. But then $0=zy=zjk=xk$, so $k \in (0:x) = yR$, and hence
$k=yr$ for some $r\in R$. Hence $y=kj=yrj$, and as $j\in \m$ we
have the equality $y=y(1-rj)(1-rj)^{-1}=0$, which contradicts the
fact that  $x$ is a zero divisor. Hence $z\in xR$, and therefore
$(0:y)=xR$.

Now let $m_{x}$ (resp., $m_{y}$) denote the multiplication by $x$
(resp., $y$). Since $(0:x)=yR$ and $(0:y)=xR$ we have the
following infinite flat resolution of $xR$ with syzygies $xR$ and
$yR$:

$$\begin{array}{ccccccccccccccc}
...\longrightarrow R\overset{m_{y}}{\longrightarrow
}R\overset{m_{x}}{\longrightarrow} R \overset{m_{y}}
{\longrightarrow}... \overset{m_{y}} {\longrightarrow }R
\overset{m_{x}}{\longrightarrow} xR {\longrightarrow} 0

\end{array}$$
We claim that $xR$ and $yR$ are not  flat. Indeed, recall that a
projective module over a local ring is free \cite{Ro}. So no
projective module is annihilated by $x$ or $y$. Since  $xR$ is
annihilated by $y$ and $yR$ is annihilated by $x$, both $xR$ and
$yR$ are not projective. Further,  $xR$ and $yR$ are finitely
presented in view of the exact sequence $0\rightarrow yR
\rightarrow R \rightarrow xR \rightarrow 0$. It follows that $xR$
and $yR$ are not flat (since a finitely presented  flat module is
 projective {\cite[Theorem 3.61]{Ro}}).
\end{proof}

\begin{corollary}[{\cite[Corollary]{Os}}] \label{A7} If $I= \m$, then I is cyclic and $R$ has infinite
weak global dimension.
\end{corollary}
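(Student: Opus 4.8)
The plan is to first pin down the structure of $\m$ under the hypothesis $I=\m$, and then invoke Lemma \ref{A6}. Since $I=\m$, Lemma \ref{A5} gives $\m^{2}=I^{2}=0$. I claim this forces $\m$ to be principal, generated by any of its nonzero elements. Indeed, fix $0\neq x\in\m$ and suppose $xR\subsetneq\m$; then there is $y\in\m\setminus xR$, and since $R$ is chained the ideals $xR$ and $yR$ are comparable, so $x\in yR$, say $x=ry$ with $r\in R$. If $r$ were a unit we would get $xR=yR$, a contradiction; hence $r\in\m=I$. But then $x=ry\in I^{2}=0$, contradicting $x\neq0$. Therefore $\m=xR$ for every $0\neq x\in\m$; in particular $I=\m$ is cyclic, and it is nonzero since $R$ is a chained ring with zero divisors.

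Next I would compute $(0:x)$ for such a generator $x$. On one hand $x^{2}=0$ gives $xR\subseteq(0:x)$. On the other hand $x\neq0$, so $(0:x)\neq R$, i.e. $(0:x)\subseteq\m=xR$. Hence $(0:x)=xR$. Now $0\neq x\in\m=\Ze(R)$ and $(0:x)=xR$ is principal, so Lemma \ref{A6} (applied with $y:=x$) yields $\wgdim(R)=\infty$, which completes the proof.

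There is no real obstacle here: the only point requiring a small argument is the principality of $\m$, and that follows immediately from $\m^{2}=0$ together with the total ordering of ideals. In effect, the corollary is just the observation that the hypothesis $I=\m$ is exactly the situation in which Lemma \ref{A6} applies with $x=y$. One could alternatively phrase the principality step by noting that $\m^{2}=0$ makes $\m$ a vector space over the field $R/\m$ whose $R$-submodules --- being precisely the ideals of $R$ contained in $\m$ --- are totally ordered, forcing $\dim_{R/\m}\m\le 1$; but the direct computation above avoids any change of viewpoint.
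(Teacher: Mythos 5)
Your proof is correct and follows essentially the same route as the paper: use $\m^{2}=I^{2}=0$ together with the chain condition to show $\m=xR$ for any nonzero $x\in\m$, verify $(0:x)=xR$, and invoke Lemma \ref{A6} with $y=x$. The only cosmetic difference is that you justify $(0:x)\subseteq\m$ by locality of $R$ where the paper uses $\Ze(R)=\m$; both are valid under the standing hypotheses.
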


\begin{proof}
Assume that $I = \m$. Then $\m^{2} =0$. Now let $0\neq a \in \m$.
We claim that  $\m= aR$. Indeed, let $b\in \m$. Since $R$ is a
chained ring, either $b = ra$ for some $r\in R$ and in this case
we are done, or $a=rb$ for some $r\in R$. In the later  case,
either $r$ is a unit and then $b=r^{-1}a \in aR$, or $r\in \m$
which implies  $a=rb=0$, which contradicts the assumption $a\neq
0$. Thus  $\m= aR$, as claimed. Moreover, we have $(0:a) = aR$.
Indeed, $(0:a) \supseteq aR$ since $a \in I$; if $x\in (0:a)$,
then $x\in \Ze(R) = \m = aR$. Hence $(0:a) = aR$. It follows that
$R$ satisfies the conditions of Lemma \ref{A6} and hence the weak
global dimension of $R$ is $\infty$.
\end{proof}

Throughout, an element $x$ of  an $R$- module $M$ is said to be
regular if $(0:x)=0$.

\begin{lemma}[{\cite[Lemma 3]{Os}}] \label{A8}  Let $F$ be a free module and $x \in F$. Then $x$ is
contained in $zR$ for some regular element $z$ of  $F$.
\end{lemma}

\begin{proof}
Let $\{y_{\alpha}\}$ be a basis for $F$ and let $x:=
\displaystyle \sum \limits_{ i=1}^{n}y_{i}r_{i}\in F$, where
$r_{i} \in R$. Since $R$ is a chained ring, there is $j \in \{1,2,
..., n\}$ such that $\displaystyle \sum \limits_{ i=1}^{n}
r_{i}R\subseteq r_{j}R$. So that for each $i \in \{1,2, ..., n\}$,
$r_{i} = r_{j}s_{i}$ for some $s_{i}\in R$ with $s_{j} = 1$. Hence
$\displaystyle x = r_{j}(\sum \limits_{i=1}^{n} (y_{i}s_{i}))$. We claim that $\displaystyle z:=\sum \limits_{i=1}^{n} y_{i}s_{i}$
is regular. Suppose not and let $t\in R$ such that $\displaystyle t(\sum
\limits_{i=1}^{n} y_{i}s_{i}) = 0$. Then $ts_{i} = 0$ for all  $i
\in\{1,2, ..., n\}$. In particular $t=ts_{j} = 0$, absurd.
Therefore $z$ is regular and $x=r_{j}z$, as desired.
\end{proof}

Note, for convenience, that in the proof of Theorem~\ref{A4} (below) we will prove the existence of a module $M$ satisfying the conditions (1) and (2) of the next lemma; which will allow us to construct -via iteration- an infinite flat resolution of $M$. 

\begin {lemma}[{\cite[Lemma 4]{Os}}] \label{A9}Assume that $(0:r)$ is infinitely
generated for all $0\neq r\in \m$. Let $M$ be an $R$-submodule of
a free module $N$ such that:

$(1)$  $M= M_{1}\bigcup M_{2} \bigcup M_{3}$, where $M_{1} =
\displaystyle \bigcup _{\substack{
           x\in M\\
            x\ regular}} xR$, $ M_{2}=
\displaystyle \bigcup \limits_{i=0}^{\infty}yu_{i}R$, with $y$
regular in $N$, $u_{i}R \subsetneqq u_{i+1}R$, and $yu_{i}$ is not
in $M_{1}$, and $M_{3} = \displaystyle \sum  v_{j}R$.

 $(2)$ $yu_{0}R \displaystyle \cap xR$ is infinitely generated for some regular $x\in M$.

Let $F$ be a free $R$-module with basis $\{y_{x}\ |\ x\ regular
\in M\} \cup \{z_{i}\ |\ i\in\omega\} \cup \{w_{j}\}$, and let
$v:F\longrightarrow N$ be the map defined by: $v(y_{x})= x$,
$v(z_{i}) = yu_{i}$, and    $v(w_{j})= v_{j}$. Then $K=Ker(v)$ has
properties $(1), (2)$, and $M$ is not flat. \end {lemma}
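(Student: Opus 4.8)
The plan is to reinterpret the statement as a short exact sequence and then transport the data $(1)$, $(2)$ along it. First, $\Ima(v)$ is an $R$-submodule of $N$ containing $v(y_x)=x$ for every regular $x\in M$ (hence containing $M_1=\bigcup xR$), containing every $v(z_i)=yu_i$ (hence containing $M_2$), and containing every $v(w_j)=v_j$ (hence containing $M_3$); so $\Ima(v)\supseteq M_1\cup M_2\cup M_3=M$, and since the reverse inclusion is clear, $v$ is onto $M$ and
$$0\longrightarrow K\longrightarrow F\overset{v}{\longrightarrow}M\longrightarrow 0$$
is exact with $F$ free. This is precisely the exact sequence that, reapplied to $K$ and then iterated, assembles into the infinite flat resolution of $M$ announced just before the Lemma; it remains to prove the three assertions about $K$ and $M$.

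The step I expect to be the main obstacle is the non-flatness of $M$, which I would prove by contradiction. Observe first that $yu_0\notin M_1$ forces $u_0$ to be a nonzero non-unit of $R$: if $u_0=0$ then $yu_0=0\in M_1$, and if $u_0$ were a unit then $y=u_0^{-1}(yu_0)\in M$ would be a regular element of $N$ lying in $M$, whence $yu_0\in yR\subseteq M_1$. Hence, by the standing hypothesis, $(0:u_0)$ is infinitely generated, and $(0:yu_0)=(0:u_0)$ because $y$ is regular. Now suppose $M$ is flat. For any $0\neq u\in(0:u_0)$, the relation $u\cdot(yu_0)=0$ together with the equational criterion for flatness gives $yu_0\in(0:u)M$, which, since $R$ is chained, collapses to $yu_0=sm$ with $s\in(0:u)$ and $m\in M$; if $m\in M_1$, writing $m=m't$ with $m'$ regular in $M$ gives $yu_0=m'(st)\in M_1$, a contradiction. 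Ruling out the cases $m\in M_2\cup M_3$ is the delicate point, and this is where property $(2)$ must enter: the equational criterion applied to the relations $rx=yu_0t$ that witness the (infinitely generated) intersection $yu_0R\cap xR$ of the regular cyclic submodule $xR$ with $yu_0R$, using that $x$ is regular so that the collapsed terms remain regular, constrains $yu_0R\cap xR$ to be finitely generated — which is incompatible with property $(2)$.

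It remains to verify that $K$ again satisfies $(1)$ and $(2)$; this is a lengthy but essentially bookkeeping matter of exhibiting the right decomposition $K=K_1\cup K_2\cup K_3$. By definition $K_1$ is the union of $\xi R$ over the regular $\xi\in K$. For $K_2$ one needs a strictly increasing $\omega$-chain of cyclic submodules $YU_iR\subseteq K$ with $Y$ regular in $F$ and $YU_i\notin K_1$; such a chain is produced from the basis vector $z_0$ (which is regular in $F$, and $v(z_0U)=yu_0U=0$ exactly when $U\in(0:u_0)$) together with a strictly increasing $\omega$-chain of principal ideals inside the infinitely generated ideal $(0:u_0)$, after adjusting representatives so that these elements avoid $K_1$. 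The basis vectors $w_j$ and all remaining relations are placed in $K_3=\sum V_jR$, and one checks that every element of $K$ indeed lands in $K_1\cup K_2\cup K_3$. Finally, property $(2)$ for $K$ follows by producing a regular element $\xi\in K$ — arising from a syzygy between $y_x$ and $z_0$ attached to the infinitely generated intersection $yu_0R\cap xR$ of $M$ — for which the intersection of $\xi R$ with the initial term of $K_2$ is again infinitely generated, the infinite generation tracing back to that of $(0:u_0)$.
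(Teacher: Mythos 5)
Your skeleton (the exact sequence $0\to K\to F\to M\to 0$, then verify $(1)$, $(2)$ for $K$ and non-flatness of $M$) matches the paper, but the two genuinely hard steps are left open, and in both cases the route you sketch would fail. For non-flatness, your case analysis on where the element $m$ with $yu_{0}=sm$ lands cannot be closed: $M_{3}=\sum v_{j}R$ carries no hypotheses at all (in the iteration it is literally $K\setminus(K_{1}\cup K_{2})$), so the case $m\in M_{3}$ yields no contradiction, and your final sentence --- that the equational criterion ``constrains $yu_{0}R\cap xR$ to be finitely generated'' --- is an assertion, not an argument. The paper proceeds differently: it applies the flatness criterion \cite[Theorem 3.57]{Ro} to the specific element $(z_{0}-y_{x}u_{0}t)r_{0}$ of $K$ to obtain a map $\theta:F\to K$ fixing it, and then shows $\theta$ would force an element of the form $z_{0}-u_{0}ty_{x}+pq$ (with $q$ regular in $F$ and $r_{0}p=0$) into $K$, which was already ruled out in the course of verifying property $(1)$ for $K$; the non-flatness proof is thus parasitic on the $K_{1}$-avoidance argument, not on a direct analysis of $M$.

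The construction of $K_{2}$ is the other gap. You take $Y=z_{0}$ and a strictly increasing chain $U_{i}$ inside $(0:u_{0})$, deferring to an unspecified ``adjustment'' the requirement $z_{0}U_{i}\notin K_{1}$. This is not bookkeeping: with $u_{0}=u_{1}m'$, the element $z_{0}-z_{1}m'$ is regular in $K$, and $z_{0}U=(z_{0}-z_{1}m')U$ whenever $U\in(0:m')\subseteq(0:u_{0})$; since $(0:m')$ is itself infinitely generated, a chain chosen in $(0:u_{0})$ can easily lie entirely in $(0:m')$, in which case every $z_{0}U_{i}$ lands in $K_{1}$ and property $(1)$ fails. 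The paper instead extracts $t$ from a relation $yu_{0}r=xs\neq 0$ witnessing property $(2)$ of $M$ (using that a regular element of a free module over this $R$ has a unit coefficient), sets $y'=z_{0}-y_{x}u_{0}t$ and $u_{i}'=r_{i}$ where $yu_{0}r_{i}$ generate $yu_{0}R\cap xR$, and proves $y'r\notin K_{1}$ by a lengthy contradiction that traces back to $yu_{0}\notin M_{1}$; it is exactly this choice that also makes property $(2)$ for $K$ computable, via $(z_{0}-z_{1}m')R\cap(z_{0}-y_{x}u_{0}t)r_{0}R=z_{0}(0:m')$. Your proposal never uses the hypothesis $yu_{i}\notin M_{1}$, which is the engine of the whole proof, so the missing verifications cannot be supplied along the lines you indicate.
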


\begin{proof}
First the map $v$ exists by {\cite[Theorem 4.1]{L}}. (1) By (2),
there exist $r, s \in R$ such that $yu_{0}r = xs \neq 0$. Here
$r\in \m$; otherwise, $yu_{0} = xsr^{-1} \in M_{1}$,
contradiction. Since $\Ze(R) =\m$, the expression for any regular
element in terms of a basis for $N$ has one coefficient a unit.
Indeed, let $(n_{\alpha})_{\alpha\in \Delta}$ be a basis for $N$
and $z$ a regular element in $N$ with $z= \displaystyle \sum
\limits _{i=0}^{i=k} c_{i}n_{i}$ where $c_{i}\in R$. As $R$ is a
chained ring, there exists $j\in \{0, ... , k\}$ such that for all
$i \in \{0, ... , k\}$, there exists  $d_{i} \in R$ with $c_{i}=
c_{j}d_{i}$ and $d_{j}=1$. We claim that $c_{j}$ is a unit. Suppose not.
Then $c_{j}\in Z(R)$. So there is a nonzero $d \in R$ with
$dc_{j}=0$, and hence $ \displaystyle dz= dc_{j} \sum \limits
_{i=0}^{i=k}d_{i}n_{i} =0$. This is absurd since $z$ is regular.

 Now, let $\displaystyle x=\sum _{\substack{
           i \in I\\
            I\ finite}}
a_{i}n_{i}$ and $\displaystyle y=\sum  _{\substack{
           i \in I\\
            I\ finite}}
b_{i}n_{i}$. Then $b_{i}u_{0}r = a_{i}s$ for all $i\in I$. Let
$i_{0} \in I$ such that $a_{i_{0}}$ is a unit. So $s= u_{0}rt$,
where  $t = b_{i_{0}}a_{i_{0}}^{-1} \in R$. Note that
$b_{i_{0}}\neq 0$ since $xs \neq 0$. Clearly, $z_{0}-y_{x}u_{0}t$
is regular in $F$ (since $z_{0}, y_{x}$ are part of the basis of
$F$), is not in $K$ (otherwise, $v(z_{0}-y_{x}u_{0}t)=0$ yields
$yu_{0}=xu_{0}t$, which contradicts $(1)$), and
$(z_{0}-y_{x}u_{0}t)r\in K$. We claim that $(z_{0}-y_{x}u_{0}t)r$
is not in $\displaystyle K_{1} := \bigcup  _{\substack{
           x^{\prime} \in K\\
            x^{\prime}\ regular}}
x^{\prime}R$. Suppose not and  assume that $r(z_{0}-u_{0}ty_{x})=
r^{\prime}x^{\prime}$ with $r^{\prime}\in R$ and  $x^{\prime}$
regular in $K$. Then $r^{\prime} \neq 0$ since $r \neq 0$ and as
$x^{\prime} \in K \subseteq F$, there are $a, b, a_{i}\in R$ such
that $x^{\prime}= az_{0}-by_{x}+ x^{\prime\prime}$, where
$\displaystyle x^{\prime\prime} = \sum _{\substack{
           y_{x} \neq f_{i}\\
           z_{0} \neq f_{i}}} a_{i}f_{i}$. Thus $r=r^{\prime}a$,
           $ru_{0}t=r^{\prime}b$,
 and $r^{\prime}x^{\prime\prime}=0$. Since $x^{\prime}$ is regular
in $F$ and $r^{\prime}x^{\prime\prime}=0$, $a$ or $b$ is unit. We
claim that $a$ is always a unit. Indeed, if $b$ is a unit, then
$r(1 - ab^{-1}u_{0}t)=0$, so if $a \in \m $, then $(1 -
ab^{-1}u_{0}t)$ is a unit which implies  $r=0$, absurd. So
$a^{-1}x^{\prime}= z_{0}-a^{-1}by_{x}+ a^{-1}x^{\prime\prime}$,
$r^{\prime}=a^{-1}r$, and $ru_{0}t=ra^{-1}b$ which implies $z_{0}-
u_{0}ty_{x}+(u_{0}t -
a^{-1}b)y_{x}+a^{-1}x^{\prime\prime}=a^{-1}x^{\prime} \in K$. By
Lemma \ref{A8} $(u_{0}t - a^{-1}b)y_{x}+a^{-1}x^{\prime\prime} =
pq$, fore some  $q$ regular in $F$ and $p\in R$. But clearly since
$r=r^{\prime}a$, $ru_{0}t=r^{\prime}b$, and
$r^{\prime}x^{\prime\prime}=0$, then  $rpq=0$. Hence $rp=0$. It
follows that $(z_{0} - y_{x}u_{0}t + qp)\in K$, where $q$ is
regular in $F$ and $ p\in (0:r)$. Thus by applying $v$ we obtain
$yu_{0}-xu_{0}t + pv(q) = 0$. But $R$ is a chained ring, so  $p$
and $u_{0}t$ are comparable and since $u_{0}tr \neq 0$,
$p=u_{0}th$ for some $h\in R$. Hence $yu_{0} = ( x -
hv(q))u_{0}t$, we show that $( x - hv(q))$ is regular in $M$ which
contradicts property $(1)$. First clearly $( x - hv(q))\in M$
since $x, v(q) \in M$. Now suppose that $a( x - hv(q))=0$ for some
$a\in \m$. Either $u_{0}t=a^{\prime}a$ for some $a^{\prime}\in R$,
this yields $yu_{0}=( x - hv(q))aa^{\prime}=0$ also impossible, or
$a=u_{0}tm$ for some $m\in R$, and this yields
$mu_{0}y=(x-hv(q))a=0$, so $mu_{0}=0$ as $y$ is regular, and hence
$a=mu_{0}t=0$. We conclude that $( x - hv(q))$ is regular in $M$
and hence $yu_{0} \in M_{1}$, the desired contradiction.

Last, let $yu_{0}R$ $\cap$ $xR$ $=$ $\langle x_{0}, x_{1},
...,x_{n}, ... \rangle$, where $$\langle x_{0}, x_{1}, ... ,x_{i}
\rangle \subsetneqq \langle x_{0}, x_{1}, ...,x_{i}, x_{i+1}
\rangle.$$ For any integer $i\geq 0$, let $x_{i}=yu_{0} r_{i}$ for
some $r_{i}\in R$. It is clear that $r_{0}R$ $\subsetneqq$
$r_{1}R$ $\subsetneqq$ ... $\subsetneqq$ $r_{i}R$ $\subsetneqq$
$r_{i+1}R$ $\subsetneqq$ ... . Now, let $y^{\prime} := z_{0} -
y_{x}u_{0}t$, $u_{i}^{\prime} := r_{i}$ for each $i\in
\mathbb{N}$. Then $K=K_{1}\bigcup K_{2} \bigcup K_{3}$, where
$\displaystyle K_{1} := \bigcup _{\substack{
           x^{\prime} \in K\\
            x^{\prime}\ regular}}x^{\prime}R$, $K_{2}:= \bigcup \limits
_{i=0}^{\infty}y^{\prime}u_{i}^{\prime}R$ with $y^{\prime}$
regular in $F$ and $u_{i}^{\prime}R \subsetneqq
u_{i+1}^{\prime}R$, and $\displaystyle K_{3}:= K\setminus
(K_{1}\bigcup K_{2})$. Thus $K$ satisfy Property (1).

(2) Since $u_{0}R\subsetneqq u_{1}R$, $u_{0} = u_{1}m^{\prime}$
for some $m^{\prime}\in \m$. Hence $x^{\prime}:= z_{0} -
z_{1}m^{\prime}$ is regular in $K$ since $v(x^{\prime})= v(z_{0} -
z_{1}m^{\prime})=yu_{0} - yu_{1}m^{\prime}=  0$ and $z_{0}, z_{1}$
are basis elements. We claim that $(z_{0} - z_{1}m^{\prime})R$
$\cap$ $(z_{0}-y_{x}u_{0}t)r_{0}R = z_{0}(0:m^{\prime})$. Indeed,
since $z_{0}, z_{1}, y_{x}$ are basis elements, then $(z_{0} -
z_{1}m^{\prime})R$ $\cap$ $(z_{0}-y_{x}u_{0}t)r_{0} \subseteq
z_{0}R$. Also $(z_{0} - z_{1}m^{\prime})R \cap z_{0}R =
z_{0}(0:m^{\prime})$. For, let $ l\in (z_{0} - z_{1}m^{\prime})R
\cap z_{0}R$. Then $l= (z_{0} -z_{1}m^{\prime})a =
z_{0}a^{\prime}$ for some $a, a^{\prime} \in R$. Hence $a =
a^{\prime}$ and $am^{\prime} = 0$, whence $l = az_{0}$ with
$am^{\prime} = 0$. So $l \in z_{0}(0:m^{\prime})$. The reverse
inclusion is straightforward. Consequently, $(z_{0} -
z_{1}m^{\prime})R$ $\cap$ $(z_{0}-y_{x}u_{0}t)r_{0}R$ $\subseteq$
$z_{0}(0:m^{\prime})$. To prove the reverse inclusion, let $k \in
(0:m^{\prime})$. Then either $k=r_{0}k^{\prime}$  or $r_{0}
=kk^{\prime}$, for some $k^{\prime} \in R$. The second case is
impossible since $r_{0}u_{0}\neq 0$. Hence $z_{0}k=
(z_{0}-y_{x}u_{0}t)r_{0}k^{\prime} \in (z_{0}-y_{x}u_{0}t)r_{0}R
$. Further,  $z_{0}k \in (z_{0} - z_{1}m^{\prime})R$. Therefore
our claim is true. But $z_{0}$ is regular, so $z_{0}(0:m^{\prime})
\cong (0:m^{\prime})$ which is infinitely generated by hypothesis.
Therefore $y^{\prime}u_{0}^{\prime}R \cap x^{\prime}R$ is
infinitely generated, as desired.

Finally, $M$ is not flat. Suppose not, then by {\cite[Theorem 3.57]{Ro}},
there is an $R$-map $\theta: F\longrightarrow K$ such that
$\theta$ $(( z_{0}-y_{x}u_{0}t)r_{0})$ $=$
$(z_{0}-y_{x}u_{0}t)r_{0}$. Assume that
$\theta(z_{0})=az_{0}+by_{x}+Z_{1}$ for some $a, b\in R$ and
$\theta(y_{x})=a^{\prime}z_{0}+b^{\prime}y_{x}+Z_{2}$ for some
$a^{\prime}, b^{\prime} \in R$. Then
$r_{0}a-r_{0}u_{0}ta^{\prime}=r_{0}$,
$r_{0}b-r_{0}u_{0}tb^{\prime}=-r_{0}u_{0}t$, and
$r_{0}Z_{1}-r_{0}u_{0}tZ_{2}=0$. Hence
$r_{0}(1-a+u_{0}ta^{\prime})=0$ and since $r_{0}\neq 0$, $a$ or
$a^{\prime}$ is a unit. Suppose that  $a$ is a unit and without
loss of generality we can  assume that $a=1$. Thus we have the
equation $z_{0}-u_{0}ty_{x} -
u_{0}ta^{\prime}z_{0}+(u_{0}t-u_{0}tb^{\prime}+b)y_{x}+Z_{1}-u_{0}tZ_{2}=\theta(z_{0})-u_{0}t\theta(Z_{2})
\in K$. By Lemma \ref{A8},
$-u_{0}ta^{\prime}z_{0}+(u_{0}t-u_{0}tb^{\prime}+b)y_{x}+Z_{1}-u_{0}tZ_{2}
= pq$, where $q$ is regular in $F$ and, clearly, $r_{0}p=0$ since
$r_{0}u_{0}ta^{\prime}=0$. Thus $z_{0}-u_{0}ty_{x} + pq \in K$, which is
absurd (as seen before in the second paragraph of  the proof of
Lemma \ref{A9}).
\end {proof}

Now we are able to prove Theorem \ref {A4}.
\bigskip

\textbf{Proof of Theorem \ref {A4}.}
If $(0:r)$ is cyclic for some $r \in \m$, then $R$ has infinite
weak global dimension by Lemma \ref {A6}. Next suppose that
$(0:r)$ is not cyclic, for all $0\neq r\in \m$. Which is
equivalent to assume that $(0:r)$ is infinitely generated for all
$0\neq r\in \m$, since $R$ is a chained ring.

Let $0\neq a\in I$ and $b\in \m \setminus I$. Note that $b$ exists
since $I\neq \m$  by the proof of Corollary \ref{A7}. Let $N$ be a
free $R$-module on two generators $y, y^{\prime}$ and let $M := (y
- y^{\prime}b)R + y(0:a)$. Then:

(A) $\displaystyle M_{1} := \bigcup _{\substack{
           x\in M\\
            x\ regular}} xR
= \{(yt - y^{\prime}b)r | 1 - t \in (0:a), r\in R)$. To show this
equality, let $c$ be a regular element in $M$. Then $c = (r_{1} +
r_{2})y - r_{1}by^{\prime}$ for some $r_{1} \in R, r_{2}\in
(0:a)$. We claim that $r_{1}$ is a unit. Suppose not. So either $r_{1}\in
(r_{2})$ hence $ac = 0$, or $r_{2} = nr_{1}$ for some $n\in R$ and
since $r_{1} \in \m = \Ze(R)$, there is $r_{1}^{\prime}\neq 0$
such that $r_{1}r_{1}^{\prime} =0$, so $r_{1}^{\prime}c = 0$. In
both cases there is a contradiction with the fact that $c$ is
regular. Thus, $r_{1}$ is a unit. It follows that $c = (1 +
r_{1}^{-1} r_{2})yr_{1} - by^{\prime}r_{1} \in \{(yt -
y^{\prime}b)r | 1 - t \in (0:a), r\in R\}$. Now let $c = yt -
y^{\prime}b$, where $(1-t)\in (0:a)$. Then $c$ is regular. Indeed,
if $rc=0$ for some $r\in R$, then $rt=0$. Moreover, either $r=na$
for some $n\in R$, and in this case $r(1-t)= na(1-t)=0$, so
$r=rt=0$ as desired, or $a=nr$ for some $n\in R$, so $a=
at=nrt=0$, absurd.

(B) There exists a countable chain of ideals $u_{0}R \subsetneqq
u_{1}R \subsetneqq ...$ where $u_{i} \in (0:a) \setminus (0:b)$.
Since $0\neq a\in I$ and $b\in \m \setminus I$, $(a) \subseteq
(b)$. Thus $(0:b)\subseteq (0:a)$. Moreover $(0:b)\subsetneqq
(0:a)$; otherwise, $a\in (0:a) = (0:b)$, and hence $ab=0$. Hence
$b\in (0:a)=(0:b)\subseteq I$ by Lemma \ref{A5}, absurd. Now let
$u_{0} \in (0:a) \setminus (0:b)$. Since $(0:a)$ is infinitely
generated, there are $u_{1}, u_{2}, ...$ such that $(u_{0})
\subsetneqq (u_{0},u_{1}) \varsubsetneqq ... \subseteq (0:a)$. So
$u_{0}R \varsubsetneqq u_{1}R \varsubsetneqq ...$ and necessarily
$u_{i} \notin (0:b)$ for all $i\geq 1$ since $u_{0} \notin (0:b)$.

Note that $yu_{i} \in M$(since $u_{i} \in (0:a)$). Also
$yu_{i}\notin M_{1}$; otherwise, if  $yu_{i} = ytr - y^{\prime}br$
with $1-t \in (0:a)$ and $r \in R$, then $u_{i} = tr$ and $br=0$.
Hence $bu_{i}= btr =0$ and thus $u_{i} \in (0:b)$, contradiction.
Also note that $y$ is regular in $N$ (part of the basis) and
$y\notin M$; if $y=(y-y^{\prime}b)r_{1}+r_{2}y$ with $r_{1}\in R$
and $r_{2}\in (0:a)$, then $r_{1}b = 0$ and $r_{1}+r_{2} = 1$. So
$r_{1}\in \m$, $ar_{1}=a$, and hence $a=0$, absurd.

(A) and (B) imply that (1) of Lemma \ref{A9} holds.

Let us show that $yu_{0}R \cap (y-y^{\prime}b)R = y(0:b)$. Indeed,
if $c= yu_{0}r = (y - y^{\prime}b)r^{\prime}$ where $r, r^{\prime}
\in R$, then $u_{0}r = r^{\prime}$ and $r^{\prime}b=0$. Hence
$c\in y(0:b)$. If $c=ry$ where $rb=0$, then $r=u_{0}t$ for some
$t\in R$ as $u_{0} \in (0:a)\setminus (0:b)$. Thus
$c=r(y-y^{\prime}b)$. Now  $y(0:b) \cong (0:b)$ is infinitely
generated. Therefore (2) of Lemma \ref{A9} holds.

Since $K$ satisfies the properties of $M$ we can consider it as a
new module $M$, and then there is a free module $F_{1}$ and a map
$v_{1}:F_{1}\longrightarrow F$ such that $K_{1}=Ker(v_{1})$
satisfies the same conditions of $K$ and $K_{1}$ is not flat. We
can repeat this iteration above to get the infinite flat
resolution of $M$: \begin{center}$...\rightarrow F_{n} \rightarrow
F_{n-1} \rightarrow ... \rightarrow F_{1} \rightarrow F_{0}
\rightarrow M \rightarrow 0$. \end{center}
 with none of
the syzygies $K, K_{1}, K_{2}, ... $ is flat. Therefore $R$ has an
infinite weak global dimension. $\Box$

\end{section}

\begin{section}{Weak global dimension of Gaussian rings}\label{G}

\noindent In 2005, Glaz proved that if $R$ is a Gaussian coherent ring, then
$\wgdim(R)=$ 0, 1, or $\infty$ \cite {G2}. In this section, we
will see that the same conclusion holds for the larger class of
Pr\"ufer coherent rings and fore some contexts of Gaussian rings.
We start by recalling the definitions of Gaussian, Pr\"ufer, and
coherent rings.

\begin{definition} \rm  \label{B1} Let $R$ be a ring. Then:
\begin{enumerate}

\item $R$ is called a Gaussian ring if for every $f, g \in R[X]$, one
has the content ideal equation $c(fg) = c(f)c(g)$, where $c(f)$,
the content of $f$, is the ideal of $R$  generated by the
coefficients of $f$.

\item $R$ is called a Pr\"ufer ring if every nonzero finitely generated regular ideal is
invertible (or, equivalently, projective)

\item $R$ is called a coherent ring if every finitely generated ideal
of $R$ is finitely presented; equivalently, if $(0:a)$ and $I \cap
J$ are finitely generated for every $a \in R$ and any two finitely
generated ideals $I$ and $J$ of $R$. \end{enumerate}
\end{definition}

Recall that Arithmetical ring $\Rightarrow$ Gaussian ring
$\Rightarrow$ Pr\"ufer ring. To see the proofs of the above
implications and that they cannot be reversed, in general, we
refer the reader to \cite { BG2, G2, G3} and Section 5 of this
paper.

Noetherian rings, valuation domains, and $K[x_{1},x_{2}, ... ]$
where $K$ is a field are examples of coherent rings. For more
examples, see \cite {G1}.

 Let $Q(R)$ denote the total ring of
fractions of $R$ and $\Nil(R)$ its nilradical. The following
proposition is the first main result of this section.
\begin{proposition} [{\cite[proposition 6.1]{BG2}}]\label{B2} Let $R$ be a coherent Pr\"ufer ring. Then
the weak global dimension of $R$ is equal to 0, 1, or $\infty$. \end{proposition}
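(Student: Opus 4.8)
The plan is to mimic the reduction used for Theorem~\ref{A3}: pass to the local case, then split according to whether the local ring is a domain. Since $\wgdim(R)=\sup\{\wgdim(R_{\m})\mid \m\in\Max(R)\}$ by \cite[Theorem 1.3.14 (1)]{G1}, and since a localization at a prime of a coherent ring is coherent and of a Pr\"ufer ring is Pr\"ufer, it suffices to treat a \emph{local} coherent Pr\"ufer ring $(R,\m)$.

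If $R$ is a domain, then, being local and Pr\"ufer, it is a valuation domain, whence $\wgdim(R)\leq 1$ by \cite[Corollary 4.2.6]{G1}. So assume $R$ has a nonzero zero divisor; the claim is $\wgdim(R)=\infty$. First, $\wgdim(R)\neq 0$: a local ring of weak global dimension $0$ is von Neumann regular, hence has only units and zero as elements, hence is a domain. Next, $\wgdim(R)\neq 1$: otherwise every finitely generated ideal of $R$ would be flat, and -- using coherence, which makes finitely generated ideals finitely presented -- therefore projective, so $R$ would be semihereditary; but over a local ring every finitely generated projective module is free, and a nonzero free ideal of a commutative ring has rank $1$, so a finitely generated ideal of $R$ would be $0$ or isomorphic to $R$, i.e. generated by a regular element, forcing $R$ to be a domain -- a contradiction. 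Hence $\wgdim(R)\geq 2$, and it remains only to rule out $2\leq\wgdim(R)<\infty$.

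For this last step I would exploit the Pr\"ufer hypothesis to recover an Osofsky-type situation. In the local ring $R$ every finitely generated ideal containing a regular element is invertible, hence principal, hence generated by a regular element; from this one shows that a local Pr\"ufer ring with a nonzero zero divisor cannot be reduced, so $\Nil(R)\neq 0$ and there exists $0\neq a\in R$ with $a^{2}=0$ -- exactly the ideal $I$ of Osofsky's Lemma~\ref{A5}. One then aims to build, as in Corollary~\ref{A7} and the proof of Theorem~\ref{A4}, a finitely presented module whose syzygies stabilize to non-flat modules (coherence keeping every syzygy finitely presented), which forces $\wgdim(R)=\infty$ and completes the proof.

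The main obstacle is precisely this zero-divisor local case: unlike the chained rings of Section~\ref{A}, the ideal lattice of a local coherent Pr\"ufer ring need not be totally ordered (e.g. $K[X,Y]/(X^{2},XY,Y^{2})$), so Osofsky's explicit flat resolution does not carry over verbatim. One must first extract enough order-theoretic structure from the Pr\"ufer condition -- typically, that $\Nil(R)$ is a nonzero \emph{nilpotent} ideal and that the principal ideals generated by regular elements are linearly ordered -- and only then run the Osofsky-style syzygy argument. Coherence enters essentially, both to upgrade ``finitely generated flat'' to ``projective'' and to keep finitely generated ideals finitely presented throughout the iteration.
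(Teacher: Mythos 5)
Your reductions are fine as far as they go (localizing, the domain case, and ruling out $\wgdim(R_{\m})=0$ or $1$ when $R_{\m}$ has zero divisors), but the heart of the matter --- excluding $2\leq\wgdim(R_{\m})<\infty$ --- is exactly the part you leave as a plan rather than a proof. You acknowledge yourself that Osofsky's construction does not transfer, since a local coherent Pr\"ufer ring need not have totally ordered ideals, and the structural facts you propose to ``extract'' first (that $\Nil(R_{\m})$ is nonzero and nilpotent, that a local Pr\"ufer ring with zero divisors cannot be reduced) are not established and are in fact doubtful: a local total ring of quotients is automatically Pr\"ufer, and such rings can be reduced with zero divisors, so there is no route to a nonzero square-zero element in general. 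As written, the proposal proves the statement only in the cases already covered by Sections~\ref{A} and \ref{G} and leaves the genuinely new case open.

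The paper's proof avoids this entirely by arguing contrapositively and globally: assume $\wgdim(R)=n<\infty$; then every finitely generated ideal $I$ has finite weak dimension, coherence makes $I$ finitely presented so that $\wdim(I)=\pdim(I)$ \cite[Corollary 2.5.5]{G1}, hence $R$ is a regular ring in Glaz's sense; Lemma~\ref{B3} then gives that $Q(R)$ is von Neumann regular, and Lemma~\ref{B6} (which is where the Pr\"ufer hypothesis is used) converts $\wgdim(Q(R))=0$ into $\wgdim(R)\leq 1$. No local analysis of zero divisors, and no Osofsky-type resolution, is needed. If you want to salvage your approach you would have to supply, for a local coherent Pr\"ufer ring with zero divisors and $2\leq\wgdim<\infty$, an actual contradiction --- and the natural way to do that is precisely the regular-ring/$Q(R)$ argument above, so I would recommend adopting it rather than trying to force the syzygy iteration of Theorem~\ref{A4} into a setting without a linearly ordered ideal lattice.
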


The proof of this proposition relies on  the following lemmas.
Recall that a ring  $R$ is called regular if every finitely
generated ideal of $R$ has a finite projective dimension; and von
Neumann regular if every $R$-module is flat.

\begin{lemma}[{\cite[Corollary 6.2.4]{G1}}] \label{B3} Let $R$ be a coherent regular ring. Then $Q(R)$ is a von Neumann regular ring.
$\Box$ \end{lemma}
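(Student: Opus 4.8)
The plan is to identify $Q(R)$ with the localization $T^{-1}R$, where $T$ denotes the multiplicative set of regular (non-zero-divisor) elements of $R$, and to show that this ring is reduced of Krull dimension $0$; such rings are precisely the von Neumann regular ones. First I would record that both hypotheses localize: coherence passes to any localization, and regularity does too, since every finitely generated ideal of $T^{-1}R$ is the extension $IT^{-1}R$ of a finitely generated ideal $I$ of $R$ (clear denominators), and localizing a finite projective resolution of $I$ over $R$ yields one of $IT^{-1}R$ over $T^{-1}R$, so $\pdim_{T^{-1}R}(IT^{-1}R)\le\pdim_{R}(I)<\infty$. Thus $Q(R)$ is again coherent and regular. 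The essential extra feature of $Q(R)$ is that it equals its own total ring of fractions, so every regular element of $Q(R)$ is a unit; equivalently, every non-unit of $Q(R)$ is a zero divisor. The task therefore reduces to the following statement: a coherent regular ring $S$ in which every non-unit is a zero divisor is von Neumann regular.

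For this, set $S=Q(R)$ and argue locally. For each maximal ideal $\mathfrak{m}$ of $S$, the localization $S_{\mathfrak{m}}$ is again coherent, regular, and local; invoking the structure theorem that a coherent regular local ring is an integral domain, each $S_{\mathfrak{m}}$ is a domain. Since reducedness can be checked after localizing at maximal ideals, this already shows that $S$ is reduced. It also makes the final bookkeeping clean: in a reduced ring one has $aS\cap\Ann(a)=0$ for every $a$ (if $a(as)=0$ then $(as)^{2}=a^{2}s^{2}=0$, hence $as=0$), so $aS$ is a direct summand, i.e. generated by an idempotent, as soon as $aS+\Ann(a)=S$. Localizing the ideal $aS+\Ann(a)$ and using $\Ann(a)S_{\mathfrak{m}}=\Ann_{S_{\mathfrak{m}}}(a/1)$ (flatness of localization applied to $0\to\Ann(a)\to S\to S$), the identity $aS+\Ann(a)=S$ is equivalent to the assertion that, for every maximal ideal $\mathfrak{m}$ and every $a$, either $a/1=0$ in $S_{\mathfrak{m}}$ or $a/1$ is a unit there. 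In other words, everything comes down to showing that each local domain $S_{\mathfrak{m}}$ is a field, i.e. that $S$ has Krull dimension $0$.

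Establishing $\dim S=0$ is the heart of the matter and the step I expect to be the main obstacle, since it is precisely where the finiteness of projective dimension must be combined with the total-quotient-ring hypothesis: a valuation domain is coherent and regular with $\wgdim=1$, so reducedness and the local-domain structure alone never force dimension $0$, and only the passage to the total ring of fractions does. Concretely, I would take a maximal ideal $\mathfrak{m}$ with $S_{\mathfrak{m}}$ not a field, choose $a\in\mathfrak{m}$ with $a/1\neq 0$ in $S_{\mathfrak{m}}$, and note that $a$, being a non-unit of $S$, is a zero divisor, so $\Ann(a)$ is a nonzero finitely generated ideal (coherence) that nonetheless vanishes in the domain $S_{\mathfrak{m}}$. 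The goal is then to convert the finite projective dimension of the finitely presented modules $aS$ and $S/aS$ into a contradiction with $a/1$ being a nonzero non-unit of $S_{\mathfrak{m}}$, forcing $S_{\mathfrak{m}}$ to be a field. Once $\dim S=0$ is secured, $S$ is reduced of Krull dimension $0$, hence von Neumann regular, which completes the proof.
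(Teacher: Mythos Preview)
The paper does not supply a proof of this lemma: it is quoted from \cite[Corollary~6.2.4]{G1} and closed with a $\Box$. So there is no in-paper argument to compare against, and the question is only whether your sketch can be completed.

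Your reductions are fine, and invoking the structure theorem that a coherent regular local ring is a domain is exactly the substantive input behind Glaz's result; once each $S_{\mathfrak m}$ (with $S=Q(R)$) is a domain, reducedness and $aS\cap\Ann(a)=0$ follow as you say. The genuine gap is your step establishing $\dim S=0$. The mechanism you propose --- ``convert the finite projective dimension of $aS$ and $S/aS$ into a contradiction with $a/1$ being a nonzero non-unit of $S_{\mathfrak m}$'' --- does not do anything: in the local domain $S_{\mathfrak m}$ one simply has $\pdim(aS_{\mathfrak m})=0$ and $\pdim(S_{\mathfrak m}/aS_{\mathfrak m})=1$, and finite projective dimension per se places no obstruction on $a/1$ being a nonzero non-unit. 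Your own example of a valuation domain already shows this.

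The missing idea uses coherence through finite generation of annihilators rather than through projective dimension. Since $\Ann_{S}(a)$ is finitely generated and $S$ is locally a domain, the localization $\Ann_{S}(a)_{\mathfrak m}=\Ann_{S_{\mathfrak m}}(a/1)$ is either $0$ or $S_{\mathfrak m}$ at every maximal ideal. A finitely generated ideal with that property is generated by an idempotent: from $I+\Ann(I)=S$ write $1=e+b$ with $e\in I$, $b\in\Ann(I)$, and check $e^{2}=e$ and $I=eS$. So $\Ann_{S}(a)=eS$ with $e=e^{2}$. Then $a+e$ is a regular element of $S$ (if $(a+e)x=0$, multiply by $e$ to get $ex=0$, hence $ax=0$, hence $x\in eS$ and $x=ex=0$), and therefore a unit since $S$ is its own total ring of quotients. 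Finally $a(a+e)^{-1}a=a^{2}(a+e)^{-1}=a(a+e)(a+e)^{-1}=a$, so every element of $S$ is von~Neumann regular. This closes the argument without ever passing through Krull dimension.
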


\begin{lemma} [{\cite[Lemma 2.1]{G2}}]  \label{B4} Let $R$ be a
local Gaussian ring and $I=(a_{1}, ..., a_{n})$ be a finitely
generated ideal of $R$. Then $I^{2}=(a_{i}^{2})$, for some $ i \in
\{1,2, ..., n\}$. \end{lemma}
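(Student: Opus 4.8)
The plan is to exploit the Gaussian hypothesis to linearize the square of a finitely generated ideal. Let $I=(a_1,\dots,a_n)$ in the local Gaussian ring $R$. Consider the two polynomials $f=\sum_{i=1}^n a_i X^{i-1}$ and $g=\sum_{i=1}^n a_i X^{n-i}$ in $R[X]$; both have content exactly $I$, so the Gaussian equation gives $c(fg)=c(f)c(g)=I^2$. The coefficients of $fg$ are sums of products $a_ia_j$, and in particular the products $a_i a_j$ for $i\le j$ generate $I^2$ (this is just the obvious fact $I^2=(a_ia_j\ :\ 1\le i\le j\le n)$, which does not even need Gaussianity). So the real content of the lemma is that among these generators one of the \emph{pure squares} $a_i^2$ already generates all of $I^2$.

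The key step is therefore a comparison, valid in a local Gaussian ring, between $a_i^2$, $a_j^2$, and the cross term $a_ia_j$. First I would handle $n=2$: take $f=a_1+a_2X$ and $g=a_1+a_2X$, i.e.\ $f=g$; then $c(f^2)=c(f)^2=I^2$, while $c(f^2)=(a_1^2,\ 2a_1a_2,\ a_2^2)$. Now I want to show $(a_1^2,a_1a_2,a_2^2)=(a_i^2)$ for one $i$. Because $R$ is local Gaussian, one shows the two elements $a_1,a_2$ satisfy a strong comparability at the level of squares: from the Gaussian identity applied to $f=a_1+a_2X$ one extracts a relation forcing $a_1a_2\in (a_1^2)$ or $a_1a_2\in(a_2^2)$, and correspondingly $a_j^2\in (a_i^2)$. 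Concretely, writing $I^2=(a_1^2,a_1a_2,a_2^2)$ and using that in a local ring a finitely generated ideal equals one of the principal ideals generated by its generators precisely when those generators are totally comparable, the Gaussian content equation is exactly the input that yields this comparability. I would isolate this as the heart of the argument: \emph{in a local Gaussian ring, for any $a,b\in R$ one has $(a^2,ab,b^2)=(a^2)$ or $(a^2,ab,b^2)=(b^2)$}.

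Granting the two-generator case, the general case follows by a straightforward induction on $n$. Assume the result for $n-1$ generators: there is $i\in\{1,\dots,n-1\}$ with $(a_1,\dots,a_{n-1})^2=(a_i^2)$. Set $J=(a_1,\dots,a_{n-1})$, so $I=J+(a_n)$ and $I^2=J^2+a_nJ+(a_n^2)=(a_i^2)+a_nJ+(a_n^2)$. Since $a_i^2$ generates $J^2\supseteq (a_i a_n \text{-type products})$... more carefully, apply the two-generator case to the pair $a_i,a_n$: either $(a_i^2,a_ia_n,a_n^2)=(a_i^2)$ or $=(a_n^2)$. In the first case one checks $a_nJ\subseteq (a_i^2)$ using $J^2=(a_i^2)$ together with the comparabilities of each $a_k$ with $a_i$ inside the chained-like local behavior, giving $I^2=(a_i^2)$; in the second case a symmetric argument gives $I^2=(a_n^2)$. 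Thus $I^2=(a_k^2)$ for some $k\in\{1,\dots,n\}$.

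The main obstacle is precisely the two-generator claim: extracting, from the single polynomial content identity $c((a+bX)^2)=(a,b)^2$, the \emph{principal} conclusion $(a^2,ab,b^2)=(a^2)$ or $(b^2)$. The subtlety is that $c((a+bX)^2)=(a^2,2ab,b^2)$ involves the coefficient $2ab$ rather than $ab$, so in characteristic-$2$ situations one must argue that $ab\in(a^2,b^2)$ separately; the cleanest route is to also use $f=a+bX$, $g=b+aX$, whose product has middle coefficient $a^2+b^2$ and outer coefficients $ab$, again giving $(a^2+b^2,ab)=(a,b)^2$, and to combine the two identities. Once one knows $ab\in(a^2,b^2)$ and $a^2,b^2$ generate a principal ideal in the local ring, the comparability of $a^2$ and $b^2$ finishes it. I would expect to invoke the local hypothesis at exactly this point: a two-generated ideal $(a^2,b^2)$ in a local ring need not be principal in general, so the Gaussian equation must be doing the work, and pinning down that implication carefully is where the proof earns its keep.
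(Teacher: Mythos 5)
Your setup matches the paper's up to the point where the real work begins, and you have correctly located that point; but you then leave it unproved, and it does not follow from what you have written. Concretely: the two identities you propose are the right ones (the paper sidesteps your characteristic-$2$ worry by taking $g=aX-b$ rather than $g=f$, so that $c\bigl((aX+b)(aX-b)\bigr)=(a^2,b^2)=(a,b)^2$, and $h=bX+a$ gives $(ab,a^2+b^2)=(a,b)^2$, which is exactly your second identity). These yield $ab\in(a^2,b^2)$, but nothing you have written forces $(a^2,b^2)$ to be principal, and as you yourself observe a two-generated ideal in a local ring need not be. The missing idea in the paper is a \emph{third} application of the Gaussian hypothesis to a polynomial manufactured from the relation extracted so far: from $(a^2,b^2)=(ab,a^2+b^2)$ write $a^2=rab+s(a^2+b^2)$, i.e.\ $(1-s)a^2+rab+sb^2=0$; by locality $s$ or $1-s$ is a unit, say $s$, so with $\alpha:=rs^{-1}$ one gets $b(b+\alpha a)\in(a^2)$. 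Now set $k:=(b+\alpha a)X-a$ and compute $c(hk)=\bigl(b(b+\alpha a),\,\alpha a^2,\,a^2\bigr)=(a^2)$, while $c(h)c(k)=(a,b)(b+\alpha a,\,a)$. Equating these gives $(a,b)(b+\alpha a,\,a)=(a^2)$, hence $ab\in(a^2)$ and then $b^2=b(b+\alpha a)-\alpha ab\in(a^2)$, so $(a,b)^2=(a^2)$. This choice of $k$, depending on the unit $s$, is the step your proposal is missing; "the Gaussian equation must be doing the work" is a correct diagnosis but not an argument.

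The reduction to $n=2$ and the induction are essentially fine once the two-generator case is available: every cross term $a_ia_j$ lies in $(a_i^2)$ or $(a_j^2)$, so $I^2=(a_1^2,\dots,a_n^2)$, and the squares $(a_i^2)$ are pairwise comparable by the $n=2$ case applied to each pair, so among finitely many one of them contains the others. Your version of the induction invokes "chained-like local behavior," which a local Gaussian ring does not have in general; what you actually need (and all you need) is the pairwise statement above. As it stands, the proposal is a correct outline with the lemma's entire content deferred to the step it does not carry out.
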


\begin{proof}
We first assume that $I=(a, b)$. Let $f(x):=ax+b$, $g(x):=ax-b$,
and $h(x):=bx+a$. Since $R$ is Gaussian, $c(fg)=c(f)c(g)$, so that
$(a, b)^{2}=(a^{2}, b^{2})$, also $c(fh)=c(f)c(h)$ which implies
that $(a, b)^{2}=(ab,a^{2}+b^{2})$. Hence $(a^{2},
b^{2})=(ab,a^{2}+b^{2})$, whence $a^{2}=rab+s(a^{2}+b^{2})$, for
some $r$ and $s$ in $R$. That is, $(1-s)a^{2}+rab+sb^{2}=0$. Since
$R$ is a local ring, either $s$ or $1-s$ is a unit in $R$. If $s$
is a unit in $R$, then $b^{2}+rs^{-1}ab+(s^{-1}-1) a^{2}=0$. Next
we show that $ab \in (a^{2})$. Let $k(x):=(b+\alpha a)x-a$, where
$\alpha:=rs^{-1}$. Then $c(hk)=c(h)c(k)$ implies that
$\displaystyle (b(b+\alpha a), \alpha a^{2}, -a^{2})=(a, b)((b+
\alpha a), a)$. But clearly $(b(b+\alpha a), \alpha a^{2},
-a^{2})=((s^{-1}-1) a^{2}, \alpha a^{2}, -a^{2})=(a^{2})$. Thus
$(a^{2})=(a, b)((b+ \alpha a), a)$. In particular, $ab \in
(a^{2})$ and so does $b^{2}$. If $1-s$ is unit, similar arguments
imply that $ab$, and hence $a^{2}\in (b^{2})$. Thus for any two
elements $a$ and $b$, $ab \in (b^{2})$ or $(a^{2})$. It follows
that $I^{2}=(a_{1}, ..., a_{n})^{2}=(a_{1}^{2}, ..., a_{n}^{2})$.
An induction on $n$ leads to the conclusion.
\end{proof}

Recall that a ring $R$ is called reduced if it has no non-zero
nilpotent elements.

\begin{lemma} [{\cite[Theorem 2.2]{G2}}] \label{B5} Let
$R$ be a ring. Then $\wgdim(R)\leq 1$ if and only if $R$ is a
Gaussian reduced ring.\end{lemma}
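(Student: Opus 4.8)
The plan is to prove the biconditional in Lemma~\ref{B5} in two directions, using Lemma~\ref{B4} and standard homological facts about weak dimension. For the easy direction, suppose $\wgdim(R)\leq 1$. Every localization $R_{\m}$ at a maximal ideal then also has weak global dimension $\leq 1$, and a ring of weak global dimension $\leq 1$ has every ideal flat; in particular it is a domain-like local ring with no nilpotents (if $0\neq a$ with $a^2=0$, the cyclic module $R/(0:a)$ would fail to be flat, forcing $\wgdim \geq 2$), so $R_{\m}$ is reduced for each $\m$ and hence $R$ is reduced. For the Gaussian part, I would invoke that $\wgdim(R)\leq 1$ implies every finitely generated ideal is flat, hence locally free of rank at most one (a finitely generated flat ideal over a local ring is free, and being an ideal it is free of rank $\leq 1$, i.e. principal); an arithmetical ring is Gaussian by the implication diagram recalled in Section~\ref{A}. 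Alternatively, one checks the content formula directly: localizing at $\m$, every finitely generated ideal of $R_{\m}$ is principal, and the Gaussian identity $c(fg)=c(f)c(g)$ is immediate for principal content ideals; since the Gaussian property is local, $R$ is Gaussian.

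For the converse, assume $R$ is Gaussian and reduced; the goal is $\wgdim(R)\leq 1$, equivalently (since weak dimension localizes) $\wgdim(R_{\m})\leq 1$ for every maximal ideal $\m$. So I would reduce to the case $(R,\m)$ local, Gaussian, and reduced, and aim to show every finitely generated ideal $I$ is flat — in fact principal. This is where Lemma~\ref{B4} does the work: for $I=(a_1,\dots,a_n)$ we get $I^2=(a_i^2)$ for some $i$. I would then argue that $I=(a_i)$. Here reducedness must enter: from $I^2=(a_i^2)$ one writes $a_i a_j = a_i^2 r_{ij}$ and, more to the point, each $a_j^2 \in (a_i^2)$, say $a_j^2 = a_i^2 s_j$; the reduced hypothesis should let me promote this to $a_j \in (a_i)$ — for instance $(a_j - a_i t)^2$ can be shown to lie in a nilpotent ideal, or one shows $a_j a_i(\text{unit}) = a_j^2$ type relations collapse. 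The cleanest route is: in a local Gaussian ring, for any $a,b$ we have $ab\in(a^2)$ or $ab\in(b^2)$ (established inside the proof of Lemma~\ref{B4}); combined with reducedness this forces $(a,b)$ to be principal, and induction on $n$ finishes.

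The step I expect to be the main obstacle is exactly this passage from the squared-ideal identity $I^2=(a_i^2)$ to $I=(a_i)$, i.e. showing that a finitely generated ideal in a local Gaussian \emph{reduced} ring is principal. Lemma~\ref{B4} is purely about squares, and without reducedness the conclusion genuinely fails (a chained ring with nilpotents is Gaussian but has ideals that are not flat); so the argument has to use $\Nil(R)=0$ in an essential way. I would handle the two-generator case $(a,b)$ by taking the relation from Lemma~\ref{B4}, say $ab\in(a^2)$ so $ab=a^2 r$, then observing $(a(b-ar))^{\,}$ has square zero after absorbing into the right ideal, whence $a(b-ar)=0$ by reducedness (or $b-ar$ annihilates a power of $a$, hence $a$ itself); then $b\in(a)$ modulo something that reducedness kills, giving $(a,b)=(a)$. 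Once every finitely generated ideal of $R_{\m}$ is principal, it is free (a reduced local ring has no nonzero nilpotents so a nonzero principal ideal $(a)$ with $a$ a zero-divisor would give $a\cdot x=0$, $x^2\ne 0$ situations to rule out — here one uses that flatness of $(a)$ is what's wanted, and a principal ideal $(a)$ is flat iff $(0:a)$ is generated by an idempotent, which holds in a local reduced ring since the only idempotents are $0,1$ and reducedness prevents $(0:a)=R$ unless $a=0$), hence flat, giving $\wgdim(R_{\m})\leq 1$ and therefore $\wgdim(R)\leq 1$.
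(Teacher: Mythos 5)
Your overall plan is sound, but your converse direction follows a genuinely different route from the paper's. The paper does not attempt to show that finitely generated ideals of $R_{\m}$ are principal; it only shows that $R_{\m}$ is a \emph{domain} (if $ab=0$, Lemma~\ref{B4} gives, say, $a^{2}=tb^{2}$, whence $a^{3}=tb(ab)=0$ and $a=0$ by reducedness) and then quotes the facts that a local Gaussian domain is a valuation domain and that $\wgdim(R)\leq 1$ iff every $R_{p}$ is a valuation domain. Your route instead extracts principality directly from Lemma~\ref{B4} plus reducedness, and it does work: from $(a,b)^{2}=(a^{2})$ write $ab=a^{2}r$ and $b^{2}=a^{2}s$; then $a(b-ar)=ab-a^{2}r=0$ exactly (no reducedness needed there), $(b-ar)^{2}=a^{2}(s-r^{2})$, hence $(b-ar)^{3}=a(s-r^{2})\cdot a(b-ar)=0$, so $b=ar$ by reducedness; the $n$-generator case follows since $(a_{i},a_{j})^{2}=(a_{i}^{2})$ for each $j$. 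What the paper's route buys is brevity; what yours buys is independence from the unproved assertion that a local Gaussian domain is a valuation ring.

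One step of your write-up does not work as stated. Principality of all finitely generated ideals only makes $R_{\m}$ a chained ring, and chained rings can have infinite weak global dimension (Theorem~\ref{A3}); so reducedness must be used once more at the end, and your closing remark about ``$(0:a)$ generated by an idempotent'' together with ``reducedness prevents $(0:a)=R$'' does not deliver what is needed, namely $(0:a)=0$ for $a\neq 0$ (over a local ring the principal ideal $(a)\cong R/(0:a)$ is flat iff it is free iff $a=0$ or $(0:a)=0$). The missing observation is that a reduced chained local ring is a domain: if $ab=0$ with $a,b\neq 0$ and, say, $b=ac$, then $b^{2}=(a^{2}c)c=(ab)c=0$, so $b=0$, a contradiction. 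With that inserted, $R_{\m}$ is a valuation domain and the corollary of Glaz cited in the paper finishes the argument. Your forward direction (flat finitely generated ideals are locally free of rank at most one, hence $R$ is arithmetical and therefore Gaussian, and reduced) is correct and essentially equivalent to the paper's appeal to the same corollary.
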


\begin{proof}
Assume that $\wgdim(R)\leq 1$. By {\cite [Corollary 4.2.6]{G1}},
$R_{p}$ is a valuation domain for every prime ideal $p$ of $R$. As
valuation domains are Gaussian, $R$ is locally Gaussian, and
therefore Gaussian. Further, $R$ is reduced. For, let $x\in R$
such that $x$ is nilpotent. We claim that $x=0$. Suppose not and let
$n\geq 2$ be an integer such that $x^{n}=0$. Then there exists a
prime ideal $q$ in $R$ such that $x\neq 0$  in $R_{q}$
{\cite[Proposition 3.8]{AM}}. It follows that $x^{n}=0$ in
$R_{q}$, a contradiction since $R_{q}$ is a domain.

Conversely, since $R$ is Gaussian reduced, $R_{p}$ is a local,
reduced, Gaussian ring for any prime ideal $p$ of $R$. We claim
that $R_{p}$ is a domain. Indeed, let $a$ and $b$ in $R_{p}$ such
that $ab=0$. By Lemma \ref{B4}, $(a, b)^{2}$=$(b)^{2}$ or
$(a^{2})$. Say $(a, b)^{2}=(b^{2})$. Then $a^{2}=tb^{2}$ for some
$t\in R_{p}$. Thus $a^{3}=tb(ab)=0$. Since $R_{p}$ is reduced,
$a=0$, and $R_{p}$ is a domain. Therefore $R_{p}$ is a valuation
domain for all prime ideals $p$ of $R$. So $\wgdim(R)\leq 1$ by
{\cite [Corollary 4.2.6]{G1}}.
\end{proof}

\begin{lemma} [{\cite[Theorem 3.3]{BG2}}] \label{B61} Let $R$ be a Pr\"ufer ring. Then $R$ is
Gaussian if and only if $Q(R)$ is Gaussian. \hfill{$\square$}
\end{lemma}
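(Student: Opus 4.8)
The statement to be proved is Lemma~\ref{B61}: for a Pr\"ufer ring $R$, one has $R$ Gaussian $\iff Q(R)$ Gaussian. The plan is to work locally and to exploit the fact that Gaussianity, being a condition on content ideals, descends and ascends well under localization, while the Pr\"ufer hypothesis is exactly what lets us control the structure of $R_{\m}$. Concretely, I would first record two easy generalities: (i) localization of a Gaussian ring is Gaussian (the content equation $c(fg)=c(f)c(g)$ is preserved because forming content commutes with localization), and more generally any overring sitting between $R$ and $Q(R)$ inherits nothing automatically --- so I cannot simply quote (i) in the hard direction; and (ii) a ring $R$ is Gaussian if and only if $R_{\m}$ is Gaussian for every maximal ideal $\m$ (content equations can be checked locally). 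Fact (ii) reduces the whole problem to the local case: I would replace $R$ by $R_{\m}$, noting that $Q(R)_{\m Q(R)}$ is the corresponding localization of $Q(R)$ at the induced prime, and that $R_{\m}$ is again Pr\"ufer. However $Q(R)$ need not be local even when $R$ is, so the reduction needs a little care: the correct statement is that $R$ Gaussian $\iff$ $R_{\m}$ Gaussian for all $\m$, and $Q(R)$ Gaussian $\iff$ $Q(R)_{\mathfrak{n}}$ Gaussian for all maximal $\mathfrak{n}$ of $Q(R)$, and every such $\mathfrak{n}$ contracts to some $\m \in \Max(R)$ with $Q(R)_{\mathfrak{n}}$ a localization of $R_{\m}$.

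The forward direction ($R$ Gaussian $\Rightarrow Q(R)$ Gaussian) is the routine half: $Q(R) = S^{-1}R$ for $S$ the set of regular elements, and a localization of a Gaussian ring is Gaussian by Fact~(i), so there is essentially nothing to do. The substance is the converse. Assume $Q(R)$ is Gaussian and $R$ is Pr\"ufer; I want $R$ Gaussian, i.e. (by Fact~(ii)) $R_{\m}$ Gaussian for each maximal ideal $\m$. Fix $\m$ and two elements $a,b \in R_{\m}$; I must establish the content identities that force the Gaussian property --- by Lemma~\ref{B4}'s proof it suffices to show, for all $a,b$, that $ab \in (a^2)$ or $ab \in (b^2)$ in $R_{\m}$, or equivalently to verify $c(fg)=c(f)c(g)$ for all $f,g$. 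The idea is to split according to whether the ideal $(a,b)R_{\m}$ is regular (contains a nonzerodivisor) or consists entirely of zerodivisors. In the regular case, since $R_{\m}$ is Pr\"ufer and local, the finitely generated regular ideal $(a,b)$ is invertible, hence principal (an invertible ideal over a local ring is principal), so $(a,b) = (a)$ or $(a,b)=(b)$ up to units, and the content equation is then immediate for polynomials with content $(a,b)$. In the non-regular case every element of $(a,b)R_{\m}$ is a zerodivisor; here I would pass up to $Q(R)$: the images of $a,b$ in $Q(R)$ lie in an ideal all of whose elements are zerodivisors of $Q(R)$, and because $Q(R)$ is Gaussian the content relations hold there; then I descend the relevant membership relations back to $R_{\m}$ using that $R_{\m} \to Q(R)_{\mathfrak{n}}$ is a flat (localization) map and that the elements and ideals involved are finitely generated. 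The key point making descent legitimate is that for a zerodivisor $a$, the annihilator behaviour and the relations $ab \in (a^2)$ are detected after localization, and no regular element is inverted between $R_{\m}$ and the relevant localization of $Q(R)$ in the non-regular situation, so the overring map is actually faithfully controlled on the pieces we need.

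The main obstacle, and the step I would spend the most care on, is precisely this descent from $Q(R)$ to $R_{\m}$ in the non-regular case: one must ensure that a relation like $a^2 = rab + s b^2$ holding in $Q(R)$ (or in some localization thereof) can be pulled back to $R_{\m}$, which requires knowing that the "denominators" appearing are not genuine obstructions --- i.e. that the regular elements we would need to invert act invertibly, or trivially, on the submodules generated by $a,b$. Organizing the case split cleanly (regular versus non-regular finitely generated ideal) and invoking the Pr\"ufer hypothesis in exactly the regular case while invoking Gaussianity of $Q(R)$ in exactly the non-regular case is the heart of the argument; once that dichotomy is set up correctly, each branch is short. I would therefore structure the write-up as: (1) reduce to local $R$; (2) dispose of the regular-ideal case via "invertible + local $\Rightarrow$ principal"; (3) handle the non-regular-ideal case by transfer to $Q(R)$ and flat descent; (4) conclude via Lemma~\ref{B4}.
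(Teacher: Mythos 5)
First, a remark on the comparison itself: the paper does not prove Lemma~\ref{B61} at all --- it is quoted from \cite[Theorem 3.3]{BG2} with the proof omitted (hence the $\square$ in the statement) --- so your proposal has to be judged on its own terms rather than against an in-paper argument. The uncontroversial parts of your plan are fine: the forward direction is indeed just ``localizations of Gaussian rings are Gaussian''; the reduction to $R_{\m}$ via locality of the Gaussian property is standard; and the regular-ideal branch works, since a finitely generated regular ideal of a Pr\"ufer ring is invertible, hence locally principal, and a principal two-generated ideal trivially satisfies the two conditions of the local characterization used in Lemma~\ref{B4} and \cite[Theorem 2.2(d)]{BG2}. (A minor caveat: you phrase this branch as ``$R_{\m}$ is Pr\"ufer and local, so $(a,b)R_{\m}$ is invertible''; since an ideal can become regular only after localization, it is cleaner --- and avoids having to justify that the Pr\"ufer property localizes --- to split cases according to whether $(a,b)$ is regular in $R$ itself and invoke invertibility in $R$.)

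The genuine gap is exactly the step you flag as ``the main obstacle'': the descent from $Q(R)$ to $R_{\m}$ in the non-regular case, for which you offer only the assertion that ``no regular element is inverted between $R_{\m}$ and the relevant localization of $Q(R)$ in the non-regular situation.'' That assertion is false as stated: $Q(R)$ is obtained from $R$ by inverting \emph{every} regular element, including those lying in $\m$, which are non-units of $R_{\m}$. Concretely, Gaussianity of $Q(R)$ gives, say, $(a,b)^{2}Q(R)=(a^{2})Q(R)$, and clearing denominators yields only relations of the form $s\cdot ab=ra^{2}$ and $s'\cdot b^{2}=r'a^{2}$ in $R$ with $s,s'$ regular; if $s\in\m$ there is no evident way to conclude $ab\in(a^{2})R_{\m}$. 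Flatness of $R_{\m}\rightarrow Q(R)_{\mathfrak{n}}$ does not rescue this, because flat (even epimorphic) ring maps do not reflect ideal membership: already $2\in 4\Q\cap\Z$ while $2\notin 4\Z$. The only situation in which $R_{\m}$ genuinely is a localization of $Q(R)$ is when $\m$ contains no regular element at all, i.e.\ $\m\subseteq\Ze(R)$, and in that easy case all of $R_{\m}$ is Gaussian for free. The case your dichotomy actually lands in --- a regular maximal ideal $\m$ containing a pair $a,b$ with $(a,b)\subseteq\Ze(R)$ --- is precisely where the Pr\"ufer hypothesis must be used a second time through further structural facts about finitely generated regular ideals (this is what the argument in \cite{BG2} does), and your outline supplies no mechanism for it. As written, branch (3) of your plan would not compile into a proof.
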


\begin{lemma} [{\cite[Theorem 3.12(ii)]{BG2}}] \label{B6} Let
$R$ be a ring. Then $\wgdim(R)\leq 1$ if and only if $R$ is a
Pr\"ufer ring and  $\wgdim(Q(R))\leq 1$.\end{lemma}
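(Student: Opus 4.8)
The plan is to prove both implications by localizing at primes and invoking the corresponding local criterion. For the forward direction, suppose $\wgdim(R) \le 1$. By Lemma \ref{B5} (or directly by \cite[Corollary 4.2.6]{G1}), $R_p$ is a valuation domain for every prime $p$; in particular $R$ is reduced and locally a domain, hence a Pr\"ufer ring since every finitely generated regular ideal becomes principal — hence projective — after localization. It remains to check $\wgdim(Q(R)) \le 1$. I would argue that localization of $R$ at a prime factors through $Q(R)$ whenever the prime consists of zero divisors: since $R$ is reduced, $\Ze(R) = \bigcup_{p \in \mathrm{Min}(R)} p$, and every prime of $Q(R)$ is the extension of a minimal prime of $R$, so $Q(R)_{\mathfrak{q}} \cong R_p$ for the contracted prime $p$, which is a valuation domain. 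Applying \cite[Corollary 4.2.6]{G1} to $Q(R)$ then gives $\wgdim(Q(R)) \le 1$.

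For the converse, suppose $R$ is Pr\"ufer and $\wgdim(Q(R)) \le 1$. Again it suffices to show $R_p$ is a valuation domain for every prime $p$ of $R$. Fix such a $p$. Because $R$ is Pr\"ufer, its localization $R_p$ is a Pr\"ufer ring with a unique maximal ideal, and a local Pr\"ufer ring is a chained ring; so $R_p$ is a chained ring, and I must rule out the presence of zero divisors in $R_p$. Here is where $\wgdim(Q(R)) \le 1$ enters: by Lemma \ref{B5} applied to $Q(R)$, the ring $Q(R)$ is Gaussian and reduced, hence so is $R$ (a subring of a reduced ring is reduced, and the Gaussian property passes to $R$ via Lemma \ref{B61}, since $R$ is Pr\"ufer and $Q(R)$ is Gaussian). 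Thus $R_p$ is a reduced chained ring. Now invoke Osofsky's theorem in the form of Theorem \ref{A4}: a chained ring with zero divisors has $\Ze$ contained in a prime, localizing at which yields a chained ring with $\Ze = \m$ and infinite weak global dimension — but that would force $\wgdim(R_p) = \infty$, contradicting the fact that $R_p$, being reduced and chained, has no nonzero nilpotents and hence (by Lemma \ref{B5}, localized) satisfies $\wgdim(R_p) \le 1$. More directly: a reduced chained ring is a domain, because if $ab = 0$ with $a,b \ne 0$ then comparability gives, say, $a = bc$, whence $a^2 = abc = 0$, contradicting reducedness. Therefore $R_p$ is a valuation domain, and \cite[Corollary 4.2.6]{G1} gives $\wgdim(R) \le 1$.

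The main obstacle I anticipate is the bookkeeping around which primes of $R$ ``see'' $Q(R)$: the clean statement $Q(R)_{\mathfrak q} \cong R_{p}$ needs $R$ reduced (so that $\Ze(R)$ is a union of minimal primes), and in the forward direction that reducedness is available, but one must be careful that an arbitrary prime $p$ with $\wdim_{R_p}$ large need not lie in $\Ze(R)$ — however if $p \not\subseteq \Ze(R)$ then $R_p$ is a valuation domain automatically and contributes nothing. In the converse direction the delicate point is deducing that $R$ is reduced purely from $\wgdim(Q(R)) \le 1$ together with the Pr\"ufer hypothesis; this is exactly what Lemma \ref{B61} and Lemma \ref{B5} are designed to supply, so the argument is really an orchestration of the three preceding lemmas plus the elementary observation that a reduced chained ring is a domain.
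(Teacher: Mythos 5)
The skeleton of your converse direction is the paper's argument --- apply Lemma \ref{B5} to $Q(R)$ to get that $Q(R)$ is Gaussian and reduced, deduce that $R$ is reduced, and transfer the Gaussian property back to $R$ via Lemma \ref{B61} --- but you then route the conclusion through a false statement: ``a local Pr\"ufer ring is a chained ring.'' This is not true, and the paper itself is built around counterexamples: by Theorem \ref{T3}, $\mathbb{R}\ltimes\mathbb{C}$ is a local total ring of quotients, hence a local Pr\"ufer ring, yet it is not arithmetical, i.e., not a chained ring (chainedness of every $R_p$ is exactly the arithmetical condition, which sits strictly above ``Pr\"ufer'' in the hierarchy displayed in the introduction). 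So the step ``$R_p$ is a chained ring because $R$ is Pr\"ufer'' fails. The repair is immediate and makes the whole detour unnecessary: once you know $R$ is Gaussian and reduced, Lemma \ref{B5} applied to $R$ itself gives $\wgdim(R)\leq 1$ directly --- its proof is precisely that a local reduced Gaussian ring is a valuation domain, via Lemma \ref{B4} --- and there is no need to mention chained rings, Osofsky's theorem, or Theorem \ref{A4} at all. This is exactly how the paper closes the argument.

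In the forward direction your conclusion is right but the bookkeeping is heavier than needed and contains a shaky claim: a prime of $Q(R)$ corresponds to a prime of $R$ contained in $\Ze(R)$, but such a prime need not be \emph{minimal} when $R$ has infinitely many minimal primes, since prime avoidance only handles finite unions. Fortunately you do not need minimality: $Q(R)$ is a localization of $R$ and weak global dimension does not increase under localization, which is all the paper invokes (``by localization''); equivalently, $Q(R)_{\mathfrak q}\cong R_p$ for the contracted prime $p$, minimal or not, and that ring is a valuation domain by \cite[Corollary 4.2.6]{G1}.
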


\begin{proof}
If $\wgdim(R)\leq 1$, $R$ is Pr\"ufer and, by localization,
$\wgdim(Q(R))\leq 1$. Conversely, assume that $R$ is a Pr\"ufer
ring such that $\wgdim(Q(R))\leq 1$. By Lemma \ref{B5}, $Q(R)$ is
a Gaussian reduced ring. So $R$ is reduced and, by Lemma
\ref{B61}, $R$ is Gaussian. By Lemma \ref{B5}, $\wgdim(R)\leq 1$.
\end{proof}

\textbf{Proof of Proposition \ref{B2}.}
Assume that $\wgdim(R) = n < \infty$ and let  $I$ be any finitely
generated ideal of $R$. Then $I$  has a finite weak dimension.
Since $R$ is a coherent ring,  $I$ is finitely presented. Hence
the weak dimension of I equals its projective dimension by  {\cite
[Corollary 2.5.5]{G1}}. Whence, as $I$ is an arbitrary finitely
generated ideal of $R$, $R$ is a regular ring. So, by {\cite
[Corollary 6.2.4]{G1}}, $Q(R)$ is von Neumann regular. By Lemma
\ref{B6}, $\wgdim(R)\leq 1$. $\Box$
\bigskip

The following is an example of a coherent Pr\"ufer ring with
infinite  weak global dimension.

\begin{example} \rm
Let $R=\mathbb{R} \ltimes \mathbb{C}$. Then $R$ is coherent by
{\cite[Theorem 2.6]{KM}}, Pr\"ufer by Theorem \ref{T3}, and
$\wgdim(R)= \infty$ by Lemma \ref{T2}.
\end{example}

In order to study the weak global dimension of an arbitrary
Gaussian ring, we make the following reductions:

(1) We may assume that $R$ is a local Gaussian ring since
$\wgdim(R)$ is the supremum of $\wgdim(R_{m})$ for all  maximal
ideal $m$ of $R$\ {\cite [Theorem 1.3.14 (1)]{G1}}.

(2) We may assume that $R$ is a non-reduced local Gaussian ring
since every reduced Gaussian ring has weak global dimension at
most $1$ by Lemma \ref{B5}.

(3) Finally, we may assume that $(R, \m)$ is a local Gaussian ring
with the maximal ideal $\m$ such that $\m = \Nil(R)$. For, the
prime ideals of a local Gaussian ring $R$ are linearly ordered, so
that $\Nil(R)$ is a prime ideal, and $\wgdim(R) \geq
\wgdim(R_{\Nil(R)})$.

Next we announce  the second  main result of this section.

\begin{thm} [{\cite[Theorem 6.4]{BG2}}] \label{B7}  Let $R$ be a Gaussian ring with a maximal ideal $\m$ such that $\Nil(R_{\m})$ is a nonzero nilpotent ideal. Then $\wgdim(R) = \infty$.
\end{thm}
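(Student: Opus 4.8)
The plan is to reduce to the local setting and then exhibit, inside the nilradical, a nonzero element whose annihilator is cyclic, so that Lemma~\ref{A6}-type reasoning (adapted to the Gaussian situation via Lemma~\ref{B4}) forces an infinite flat resolution. First I would invoke the localization formula $\wgdim(R)=\sup_{\m}\wgdim(R_{\m})$ from \cite[Theorem 1.3.14(1)]{G1}, so that it suffices to treat the ring $R_{\m}$ itself; replacing $R$ by $R_{\m}$, I may assume $(R,\m)$ is local Gaussian and $N:=\Nil(R)$ is a nonzero nilpotent ideal, say $N^{k}=0$ with $k\geq 2$ chosen minimal. Choose $0\neq a\in N^{k-1}$, so that $aN=0$ and in particular $a^{2}=0$, i.e. $a$ lies in the ideal $I=\{x\mid x^{2}=0\}$ of the preceding section. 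The aim is to understand $(0:a)$ and show it is principal.

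The key technical step is to use the fact that in a local Gaussian ring finitely generated ideals behave well under squaring: by Lemma~\ref{B4}, for any $x,y\in R$ one has $xy\in(x^{2})$ or $xy\in(y^{2})$, and consequently the ideals of $R$ are comparable after squaring. I would argue as follows: since $a\in\Nil(R)$ and $R$ is local Gaussian, $(0:a)\supseteq N\ni a$ (using $aN=0$), and I want to pin down a generator. Take any $0\neq x\in(0:a)$. Applying Lemma~\ref{B4} to the pair $(a,x)$: either $ax\in(a^{2})=0$ (automatic, since $ax=0$) — this gives no information directly — so instead I would compare the principal ideals $(a)$ and $(x)$ via the Gaussian squaring identity $(a,x)^{2}=(a^{2},x^{2})=(x^{2})$ (as $a^{2}=0$), which yields $a^{2}=0\in(x^{2})$ trivially but more usefully $ax\in(x^{2})$, so $ax=tx^{2}$ for some $t$; since $ax=0$ this says $tx^{2}=0$. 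Iterating this kind of relation down the nilpotency filtration $N\supseteq N^{2}\supseteq\cdots\supseteq N^{k}=0$, I expect to show that $(0:a)$ coincides with a single principal ideal $bR$ where $b$ is a suitable element of $N$ (morally, $b$ generates the ``bottom but one'' layer paired with $a$), and moreover that $(0:b)=aR$, exactly mirroring the symmetric annihilator situation set up in Lemma~\ref{A6}.

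Once $(0:a)=bR$ and $(0:b)=aR$ are established, the conclusion is immediate: the multiplication maps give the periodic complex
\[
\cdots\longrightarrow R\xrightarrow{\,m_{b}\,}R\xrightarrow{\,m_{a}\,}R\xrightarrow{\,m_{b}\,}\cdots\xrightarrow{\,m_{a}\,}aR\longrightarrow 0,
\]
which is exact because $(0:a)=bR$ and $(0:b)=aR$, so it is a flat (indeed free) resolution of $aR$ with repeating syzygies $aR$ and $bR$; and neither $aR$ nor $bR$ is flat, since each is a nonzero finitely presented module annihilated by a nonzero element, hence not projective (a projective module over a local ring is free, so cannot be killed by a nonzero element), and a finitely presented flat module is projective by \cite[Theorem 3.61]{Ro}. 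Therefore $\wdim_{R}(aR)=\infty$ and $\wgdim(R)=\infty$. The main obstacle I anticipate is the middle step: extracting a \emph{single} generator for $(0:a)$ purely from the Gaussian squaring identities and the nilpotency hypothesis. In a chained ring this was automatic from the total order on ideals (that is how Lemma~\ref{A6} and Corollary~\ref{A7} went), but a local Gaussian ring need not be chained, so I would need the precise reduction (3) made just before the theorem — namely that we may take $\m=\Nil(R)$ — together with a careful induction on the nilpotency index to force the required principality and the symmetry $(0:(0:a))=aR$. If principality genuinely fails, the fallback is to run the Lemma~\ref{A9} machinery (producing a module $M=M_{1}\cup M_{2}\cup M_{3}$ with an infinitely generated intersection $yu_{0}R\cap xR$) directly in the Gaussian setting, using Lemma~\ref{B4} in place of the chained-ring order wherever comparability of ideals was invoked.
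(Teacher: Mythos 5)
There is a genuine gap at the heart of your argument: the principality of $(0:a)$ that you need in order to run the Lemma~\ref{A6}-style periodic resolution fails in general. Take $R:=K\ltimes E$ with $K$ a field and $E$ a $K$-vector space of dimension at least $2$. This is a local Gaussian ring with maximal ideal $\m =0\ltimes E=\Nil(R)$ and $\m^{2}=0$, so it satisfies the hypotheses of the theorem with nilpotency index $k=2$; but for any $0\neq a=(0,e)$ in the top nonzero power $N^{k-1}=0\ltimes E$ of the nilradical one computes $(0:a)=0\ltimes E$, which is not principal (a principal ideal $(0,f)R=0\ltimes Kf$ is a proper subideal). Hence no $b$ with $(0:a)=bR$ and $(0:b)=aR$ exists, and the periodic complex you write down is not available. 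Your fallback --- rerunning the Lemma~\ref{A9} machinery with Lemma~\ref{B4} in place of comparability of ideals --- is not worked out and is far from routine: the proof of Lemma~\ref{A9} invokes the total order on ideals of a chained ring at almost every step (e.g.\ to show a regular element of a free module has a unit coefficient, and to compare $p$ with $u_{0}t$), whereas Lemma~\ref{B4} only compares squares of elements, not arbitrary pairs of ideals.

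The paper's proof takes a different and much softer route that handles exactly this example. The Gaussian hypothesis is used only once: the primes of a local Gaussian ring are linearly ordered, so $\Nil(R_{\m})$ is a prime ideal $pR_{\m}$, and localizing further at $p$ yields a local ring $R_{p}$ whose maximal ideal $pR_{p}$ is nonzero and nilpotent. Proposition~\ref{B10}, which requires no Gaussian, chained, or coherence hypothesis, then gives $\wdim(pR_{p})=\infty$: using Lemma~\ref{B9} and dimension shifting along $0\rightarrow \m^{n-h}\rightarrow \m^{n-(h+1)}\rightarrow \m^{n-(h+1)}/\m^{n-h}\rightarrow 0$, one proves $\wdim(\m^{n-k})=\wdim(\m)+1$ for every $k<n$, and the case $k=n-1$ reads $\wdim(\m)=\wdim(\m)+1$, forcing $\wdim(\m)=\infty$. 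So the mechanism producing infinite weak dimension here is this self-referential dimension count (compare Lemma~\ref{T2}), not the existence of a cyclic annihilator; your approach would need to be replaced, not merely patched.
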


The proof of this theorem involves  the following results:

\begin{lemma} \label{B8} Consider the following exact sequence of
$R$-modules
\begin{center} $0\longrightarrow M^{\prime} \longrightarrow M
\longrightarrow M^{\prime\prime} \longrightarrow 0$
\end{center}
where  $M$ is flat. Then either the three modules are flat or
$\wdim(M^{\prime\prime})=\wdim(M^{\prime})\\ +1$.
\end{lemma}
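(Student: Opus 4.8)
The plan is to run the long exact sequence in $\Tor$ attached to $0\to M'\to M\to M''\to 0$ against the standard characterization of weak dimension: $\wdim(L)=\sup\{n\mid \Tor_n^R(L,N)\neq 0 \text{ for some }R\text{-module }N\}$, and in particular $L$ is flat if and only if $\Tor_1^R(L,N)=0$ for every $N$. Everything will follow from one family of isomorphisms produced by the flatness of $M$, followed by a short case analysis on whether $M''$ is flat.

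First I record the key isomorphisms. Since $M$ is flat, $\Tor_n^R(M,N)=0$ for all $n\geq 1$ and all $N$, so the long exact sequence
$$\cdots\to \Tor_{n+1}^R(M,N)\to \Tor_{n+1}^R(M'',N)\to \Tor_n^R(M',N)\to \Tor_n^R(M,N)\to\cdots$$
collapses to natural isomorphisms $\Tor_{n+1}^R(M'',N)\cong \Tor_n^R(M',N)$ for all $n\geq 1$, while at the bottom it yields an exact sequence $0\to \Tor_1^R(M'',N)\to M'\otimes_R N\to M\otimes_R N$.

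Now the case analysis. If $M''$ is flat, then the isomorphisms above force $\Tor_n^R(M',N)\cong \Tor_{n+1}^R(M'',N)=0$ for all $n\geq 1$ and all $N$, so $M'$ is flat; together with the hypothesis that $M$ is flat this is the first alternative. So assume $M''$ is not flat, i.e. $\wdim(M'')\geq 1$, and let me prove $\wdim(M'')=\wdim(M')+1$. Rewriting the isomorphisms as $\Tor_m^R(M'',N)\cong \Tor_{m-1}^R(M',N)$ for $m\geq 2$ gives
$$\sup\{m\geq 2\mid \Tor_m^R(M'',N)\neq 0\text{ for some }N\}=1+\sup\{k\geq 1\mid \Tor_k^R(M',N)\neq 0\text{ for some }N\}.$$
If $M'$ is flat, the right-hand supremum is over the empty set, so $\Tor_m^R(M'',-)=0$ for $m\geq 2$ and hence $\wdim(M'')\leq 1$; with $\wdim(M'')\geq 1$ this gives $\wdim(M'')=1=\wdim(M')+1$. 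If $M'$ is not flat, then $\wdim(M')=\sup\{k\geq 1\mid \Tor_k^R(M',N)\neq 0\}\geq 1$, so the identity reads $\sup\{m\geq 2\mid \Tor_m^R(M'',N)\neq 0\}=1+\wdim(M')\geq 2$; since this supremum is at least $2$, it dominates the $\Tor_1$ contribution, and therefore $\wdim(M'')=1+\wdim(M')$, the convention $\infty+1=\infty$ covering $\wdim(M')=\infty$.

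There is no real obstacle here; the only delicate point is that the bottom of the long exact sequence gives merely an injection $\Tor_1^R(M'',N)\hookrightarrow M'\otimes_R N$ rather than an isomorphism, so the $\Tor_1$ level has to be handled separately in the case split. This separation is precisely what makes the statement a dichotomy and explains why the shift ``$+1$'' cannot be sharpened.
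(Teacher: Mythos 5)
Your proof is correct and follows essentially the same route as the paper's: both arguments rest on the long exact sequence in $\Tor$ together with the vanishing of $\Tor_{n}^{R}(M,-)$ for $n\geq 1$ to shift dimension by one between $M''$ and $M'$. The only difference is presentational -- you package the shift as a single family of isomorphisms $\Tor_{n+1}^{R}(M'',N)\cong\Tor_{n}^{R}(M',N)$ and then read off both inequalities at once (handling the $\Tor_1$ level and the infinite-dimension case a bit more explicitly), whereas the paper derives the two inequalities $\wdim(M'')\leq\wdim(M')+1$ and $\wdim(M'')\geq\wdim(M')+1$ by two separate applications of the same exact sequence.
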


\begin{proof}
This is a classic result. We offer here a proof for the sake of completeness. Suppose that $M^{\prime\prime}$ is flat. Then by the long exact
sequence theorem \cite [Theorem 8.3]{Ro} we get the exact sequence
$$0=Tor_{2}(M^{\prime\prime},N)\longrightarrow\
Tor_{1}(M^{\prime},N) \longrightarrow\  Tor_{1}(M,N)=0$$  for any
$R$-module $N$. Hence $Tor_{1}(M^{\prime},N)=0$ which implies that
$M^{\prime}$ is flat.

Next, assume that $M^{\prime\prime}$ is not flat. In this case, we
claim that $$\wdim(M^{\prime\prime})=\wdim(M^{\prime})+1.$$ Indeed,
let $\wdim(M^{\prime})=n$. Then we have the exact sequence
\begin{center} $0=Tor_{n+2}(M,N)\longrightarrow
Tor_{n+2}(M^{\prime\prime},N) \longrightarrow
Tor_{n+1}(M^{\prime},N)=0$ \end{center} for any $R$-module $N$.
Hence $Tor_{n+2}(M^{\prime\prime},N)=0$ for any $R$-module $N$
which implies
\begin{center}$\wdim(M^{\prime\prime})\leq n+1 = \wdim(M^{\prime})+1$ \end{center}
Now let $\wdim(M^{\prime\prime})=m$. Then we have the exact
sequence $$0=Tor_{m+1}(M^{\prime\prime},N)\longrightarrow
Tor_{m}(M^{\prime},N) \longrightarrow Tor_{m}(M,N)=0$$ for any
$R$-module $N$. Hence $Tor_{m}(M^{\prime},N)=0$ for any $R$-module
$N$ which implies that
\begin{center}$\wdim(M^{\prime\prime})=m \geq \wdim(M^{\prime})+1$
\end{center}
Consequently,  $\wdim(M^{\prime\prime})=\wdim(M^{\prime})+1$.
\end{proof}

Recall that an exact sequence of R-modules $$0 \longrightarrow
M^{\prime} \longrightarrow M \longrightarrow M^{\prime\prime}
\longrightarrow 0$$ is pure if it remains exact when tensoring it
with any $R$-module. In this case, we say that $M^{\prime}$ is a
pure submodule of $M$ \cite{Ro}.
\begin{lemma} [{\cite[Lemma 6.2]{BG2}}] \label{B9} Let $(R, \m)$ be a local ring which is not a field.
Then $\wdim(R/\m)=\wdim( \m)+1$. \end{lemma}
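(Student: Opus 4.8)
Proof proposal for Lemma \ref{B9} (the statement $\wdim(R/\m) = \wdim(\m)+1$ for a local ring $(R,\m)$ that is not a field).

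The plan is to apply Lemma \ref{B8} to the short exact sequence of $R$-modules
\begin{center}
$0 \longrightarrow \m \longrightarrow R \longrightarrow R/\m \longrightarrow 0,$
\end{center}
in which the middle term $R$ is free, hence flat. By Lemma \ref{B8} there are only two possibilities: either all three modules are flat, or $\wdim(R/\m) = \wdim(\m)+1$. So the entire content of the lemma is to rule out the first alternative, i.e. to show that $R/\m$ is \emph{not} flat whenever $R$ is not a field. Once that is established, Lemma \ref{B8} delivers the equality immediately (note it is valid even if $\wdim(\m) = \infty$, interpreting $\infty + 1 = \infty$).

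To see that $R/\m$ is not flat, I would argue as follows. Since $R$ is local but not a field, $\m \neq 0$; pick $0 \neq a \in \m$. If $R/\m$ were flat, then tensoring the inclusion $\m \hookrightarrow R$ with $R/\m$ would stay injective, which forces the natural map $\m \otimes_R R/\m \to R/\m$ to have the expected kernel; more concretely, flatness of $R/\m$ gives $\Tor_1^R(R/\m, R/\m) = 0$, and from the short exact sequence above together with the long exact $\Tor$ sequence one computes $\Tor_1^R(R/\m, R/\m) \cong \Ker(\m/\m^2 \to R/\m) = \m/\m^2$. Thus flatness of $R/\m$ would force $\m = \m^2$. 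But $\m$ is a finitely generated-or-not ideal — in any case, the cleanest route is: a flat module that is also cyclic of the form $R/I$ is flat if and only if $I$ is generated by idempotents locally, and over a local ring the only idempotents are $0$ and $1$, so $R/\m$ flat would force $\m = 0$, contradicting that $R$ is not a field. Alternatively, and most elementarily, $R/\m$ flat implies $a \otimes \bar 1 = 0$ in $R \otimes_R R/\m = R/\m$ must reflect a relation, and chasing the equational criterion for flatness yields $a \in a\m$, whence $a(1-m) = 0$ for some $m \in \m$; as $1 - m$ is a unit, $a = 0$, a contradiction.

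The main obstacle is simply pinning down cleanly \emph{why} $R/\m$ fails to be flat; everything else is a mechanical invocation of Lemma \ref{B8}. I expect the paper takes the purity route: it recalls the notion of a pure submodule just before the lemma, so the intended argument is likely that if $R/\m$ were flat then $\m$ would be a pure submodule of $R$, and a pure submodule $\m$ of $R$ with $R/\m$ finitely presented would be a direct summand, making $\m = eR$ for an idempotent $e$; locality then forces $e \in \{0,1\}$, and $e = 1$ is impossible ($\m \neq R$) while $e = 0$ contradicts $R$ not being a field. I would present whichever of these is shortest, then close by citing Lemma \ref{B8}.
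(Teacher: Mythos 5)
Your proposal is correct and follows essentially the same route as the paper: reduce everything to showing that $R/\m$ is not flat, derive $a\in a\m$ for $0\neq a\in\m$ from flatness (the paper obtains this from the purity criterion $aR\cap\m=(aR)\m$), conclude $a(1-m)=0$ with $1-m$ a unit so $a=0$, and then invoke Lemma \ref{B8}. One caution: two of your alternative sketches are not self-contained --- $\m=\m^{2}$ alone yields no contradiction when $\m$ is not finitely generated, and the ``$I$ generated by idempotents'' characterization of flat cyclic quotients likewise presupposes finite generation --- so the elementary $a\in a\m$ argument is the one to keep.
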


\begin{proof}
 Consider the short exact sequence \begin{center} $0\rightarrow \m
\rightarrow R \rightarrow R/\m \rightarrow 0$. \end{center} Assume
that $R/\m$ is flat. By {\cite [Theorem 1.2.15 (1,2,3)]{G1}}, $\m$
is pure and $(aR)\m=aR\cap \m=aR$ for all $a\in \m$. Hence
$a\m=aR$, for all $a\in \m$, and so by Nakayama's Lemma,
$a=0$, absurd. By Lemma \ref{B8}, $\wdim(R/ \m)=\wdim_{R}( \m)+1$.
\end{proof}

\begin{proposition}[{\cite[Proposition 6.3]{BG2}}] \label{B10} Let $(R, \m)$ be a local ring with nonzero
nilpotent maximal ideal. Then $\wdim(\m)=\infty$.
\end{proposition}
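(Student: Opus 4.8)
\textbf{Proof proposal for Proposition \ref{B10}.}
The plan is to show that the maximal ideal $\m$ cannot have finite weak dimension, arguing by contradiction. Since $\m$ is nilpotent, say $\m^{k}=0$ with $k$ minimal, I would pick a nonzero element $x\in\m^{k-1}$; then $\m x=0$, so the cyclic module $xR$ is annihilated by $\m$ and is therefore a nonzero $R/\m$-vector space, hence a direct sum of copies of $R/\m$. Consequently $\wdim_{R}(xR)=\wdim_{R}(R/\m)$, and by Lemma \ref{B9} (applied, noting $R$ is not a field since $\m\neq 0$) this equals $\wdim_{R}(\m)+1$. If $\wdim_{R}(\m)=n<\infty$, then $\wdim_{R}(xR)=n+1<\infty$ as well.

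Next I would exploit the inclusion $xR\hookrightarrow\m$ to derive a contradiction. The key point is that a submodule of a module of finite weak dimension need not have finite weak dimension in general, so I need something sharper: I would instead run the comparison the other way. From the short exact sequence $0\to xR\to\m\to\m/xR\to0$ one gets, via the long exact $\Tor$ sequence, that $\wdim_{R}(\m/xR)\leq\max\{\wdim_{R}(\m),\wdim_{R}(xR)\}+1$, which stays finite; this alone is not enough. The cleaner route is to note that because $\m$ is nilpotent every element of $\m$ is a zero divisor, and more importantly $\m=\Ze(R)$ is precisely the situation of a non-reduced local ring, so one can feed $\m$ into a descending induction on the nilpotency index: set $R_{0}=R$ and observe that $\m/\m^{k-1}$ is the maximal ideal of $R/\m^{k-1}$, whose nilpotency index is $k-1$; by the inductive hypothesis $\wdim_{R/\m^{k-1}}(\m/\m^{k-1})=\infty$, and one transfers this back to $R$ using flat base change obstructions.

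Actually the most economical argument, and the one I would write up, combines the two facts above directly: assume $\wdim_{R}(\m)=n<\infty$. As established, $\wdim_{R}(xR)=n+1$ where $xR\cong(R/\m)^{(\Lambda)}$ for some index set $\Lambda$, hence $\wdim_{R}(R/\m)=n+1$, hence by Lemma \ref{B9} again $\wdim_{R}(\m)=n$, which is consistent so far — so I must instead produce a strictly ascending chain. The trick is iteration: $x\m^{-1}$-type considerations show that for each $j$ the module $\m^{j}/\m^{j+1}$ is a nonzero $R/\m$-vector space, so $\wdim_{R}(\m^{j}/\m^{j+1})=\wdim_{R}(R/\m)=\wdim_{R}(\m)+1$ is the same finite number $n+1$ for all $j<k$; but splicing the exact sequences $0\to\m^{j+1}\to\m^{j}\to\m^{j}/\m^{j+1}\to0$ forces $\wdim_{R}(\m^{j})$ to strictly increase as $j$ decreases from $k-1$ to $0$ (since $\m^{k-1}$ already has $\wdim_{R}(\m^{k-1})=n+1-1=n$ at best, and each step adds the obstruction from the nonflat quotient), yielding $\wdim_{R}(\m)\geq\wdim_{R}(\m^{k-1})+(k-2)$, a contradiction with finiteness once we also note $\wdim_{R}(\m^{k-1})\geq 1$.

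\textbf{Main obstacle.} The delicate point is the bookkeeping in the last step: one must verify that at each stage the quotient $\m^{j}/\m^{j+1}$ is genuinely \emph{not} flat (so that Lemma \ref{B8} gives the ``$+1$'' rather than the trivial case), and that the weak dimensions of the $\m^{j}$ do not accidentally collapse. Showing non-flatness of $\m^{j}/\m^{j+1}$ reduces to showing $R/\m$ is not flat over $R$, which follows from Lemma \ref{B9} (flatness of $R/\m$ would force $\m=\m^{2}$, impossible by Nakayama since $\m$ is nilpotent and nonzero). Assembling the inequalities into a strict increase of length $k-1$ is then routine, and the contradiction with $\wdim_{R}(\m)=n<\infty$ completes the argument.
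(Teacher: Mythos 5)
Your opening paragraph is sound and is exactly the paper's base case: for $x\neq 0$ in the last nonzero power of $\m$ one has $\m x=0$, so $xR$ (or the whole of $\m^{k-1}$) is a nonzero $R/\m$-vector space and $\wdim(xR)=\wdim(R/\m)=\wdim(\m)+1$ by Lemma \ref{B9}. The gap is in the final step. Your contradiction rests on the claim that splicing the sequences $0\to\m^{j+1}\to\m^{j}\to\m^{j}/\m^{j+1}\to 0$ forces $\wdim(\m^{j})$ to \emph{strictly increase} as $j$ decreases, ``since each step adds the obstruction from the nonflat quotient.'' That mechanism is not available: Lemma \ref{B8} yields the ``$+1$'' only when the \emph{middle} term of the sequence is flat, and here the middle term is $\m^{j}$, which is not flat. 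For a general short exact sequence with non-flat middle term the long exact $\Tor$ sequence gives inequalities, not a forced increment, so the inequality $\wdim(\m)\geq\wdim(\m^{k-1})+(k-2)$ does not follow. (It is also vacuous when $k=2$, a case your scheme would have to treat separately.)

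The correct bookkeeping — and the paper's actual argument — runs the induction so that all the powers have the \emph{same} weak dimension rather than increasing ones. Set $l:=\wdim(\m)+1$ and suppose $\wdim(\m^{j+1})=l$ and $\wdim(\m^{j}/\m^{j+1})=l$ (the latter because the quotient is a nonzero $R/\m$-vector space, so Lemma \ref{B9} applies). The long exact sequence gives
$0=\Tor_{l+1}(\m^{j+1},N)\to\Tor_{l+1}(\m^{j},N)\to\Tor_{l+1}(\m^{j}/\m^{j+1},N)=0$, hence $\wdim(\m^{j})\leq l$; and if $\wdim(\m^{j})<l$ then $0=\Tor_{l+1}(\m^{j}/\m^{j+1},N)\to\Tor_{l}(\m^{j+1},N)\to\Tor_{l}(\m^{j},N)=0$ would force $\wdim(\m^{j+1})\leq l-1$, a contradiction. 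So $\wdim(\m^{j})=l$ for every $j$ with $1\leq j\leq k-1$, and taking $j=1$ yields $\wdim(\m)=\wdim(\m)+1$, which is possible only if $\wdim(\m)=\infty$. The contradiction thus comes from the equality $\m^{1}=\m$ closing the loop, not from an unbounded ascent; your proposal as written does not reach it.
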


\begin{proof}
Let $n$ be the minimum integer such that $\m^{n}=0$. We claim that
for all $1\leq k <n$, $\wdim(\m^{n-k})=\wdim( \m)+1$. Indeed, let
$k=1$. Then $\m^{n-1}\m=0$, so $\m^{n-1}$ is an $(R/\m)$-vector
space, hence $0\neq \m^{n-1}\cong \bigoplus R/\m$, implies that
$\wdim_{R}(\m^{n-1})= \wdim(R/\m)= \wdim( \m)+1$ by Lemma \ref{B9}
. Now let $h$ be the maximum integer in $\{1, ..., n-1\}$ such
that $\wdim(\m^{n-k})=\wdim( \m)+1$ for all $k\leq h$. Assume by
way of contradiction that $h<n-1$. Then we have the exact
sequence:
\begin{equation*}
0\rightarrow \m^{n-h} \rightarrow \m^{n-(h+1)}\rightarrow \m^{n-(h+1)}/\m^{n-h} \rightarrow 0\hspace{1cm}  (\ast)
\end{equation*}
where  $\m^{n-(h+1)}/\m^{n-h}$ is a nonzero $(R/\m)$-vector space.
So by Lemma \ref{B9}, we have $\wdim(\m^{n-(h+1)}/ \m^{n-h})= \wdim(
\m)+1$. By hypothesis, $\wdim(\m^{n-h})=\wdim(\m) +1$. Let us
show that $\wdim(\m^{n-(h+1)})=\wdim( \m)+1$. Indeed, if
$l:=\wdim( \m) +1$, then by applying the long exact sequence
theorem to $(\ast)$, we get
\begin{center} $0=Tor_{l+1}(\m^{n-h},N)\longrightarrow
Tor_{l+1}(\m^{n-(h+1)},N) \longrightarrow
Tor_{l+1}(\frac{\m^{n-(h+1)}}{\m^{n-h}},N)=0$
\end{center} for any $R$-module $N$.
Hence $Tor_{l+1}(\m^{n-(h+1)},N)=0$ for any $R$-module $N$ which
implies
\begin{center}$\wdim(\m^{n-(h+1)})\leq l = \wdim(\m)+1$ \end{center}

Further, if $\wdim(\m^{n-(h+1)})\lneqq l$, then we have
\begin{center} $0=Tor_{l+1}(\frac{\m^{n-(h+1)}}{\m^{n-h}},N)\longrightarrow Tor_{l}(\m^{n-h},N)\longrightarrow
Tor_{l}(\m^{n-(h+1)},N)=0$
\end{center} for any $R$-module $N$.
Hence $Tor_{l}(\m^{n-h},N)=0$ for any $R$-module $N$ which implies
that $\wdim(\m^{n-h)})\leq l-1$, absurd. Hence
$\wdim(\m^{n-(h+1)})=\wdim( \m)+1$, the desired contradiction.
Therefore the claim is true and, in particular, for $k=n-1$, we
have $\wdim(\m)=\wdim( \m)+1$, which yields $\wdim(\m)=\infty$.
\end{proof}

\textbf{Proof of Theorem \ref{B7}.}
Suppose that $R$ is Gaussian and $\m$ is a maximal ideal in $R$
such that  $\Nil(R_{\m})$ is a nonzero nilpotent ideal. Then
$R_{\m}$ is also Gaussian and $\Nil(R_{\m})$ is a prime ideal in
$R$. Moreover $\Nil(R_{\m})=pR_{\m}\neq 0$ for some prime ideal
$p$ in $R$. Now, the maximal ideal $pR_{p}$ of $R_{p}$ is nonzero
since $0 \neq pR_{\m} \subseteq pR_{p}$. Also by assumption, there
is a positive integer $n$ such that $(pR_{\m})^{n}=0$, whence
$p^{n}=0$. So $(pR_{p})^{n}=0$ and hence $pR_{p}$ is nilpotent.
Therefore $R_{p}$ is a local ring with nonzero nilpotent maximal
ideal. By Proposition \ref{B10}, $\wgdim(R_{p})=\infty$. Since
$\wgdim(R) \geq \wgdim(R_{S})$ for any localization $R_{S}$ of
$R$, we get $\wgdim(R)=\infty$. $\Box$
\bigskip

In the previous section, we saw that the weak global dimension of an arithmetical  ring is $0$, $1$, or $\infty$. In this section, we saw that the same result holds if $R$ is Pr\"ufer coherent or
$R$ is a Gaussian ring with a maximal ideal $\m$ such that $\Nil(R_{\m})$ is a nonzero nilpotent ideal.

The question of whether this result is true for an arbitrary Gaussian ring was the object of Bazzoni-Glaz conjecture which sustained that the weak global dimension of a
Gaussian ring is $0$, $1$, or $\infty$. In a first preprint \cite{DT1}, Donadze and Thomas claimed to prove this conjecture in all cases except when the ring $R$ is a non-reduced local Gaussian ring with nilradical $N$ satisfying $N^{2} = 0$. Then in a second preprint \cite{DT2}, they claimed to prove the conjecture for all cases.

\end{section}

\begin{section}{Gaussian rings via trivial ring extensions}\label{T}

\noindent In this section, we will use trivial ring extensions to construct
new examples of non- arithmetical Gaussian rings , non-Gaussian
Pr\"ufer  rings, and illustrative examples for  Theorem \ref{A4}
and Theorem \ref{B7}.
Let $A$ be a ring and $M$ an $R$-module. The trivial ring
extension of $A$ by $M$ (also called the idealization of $M$ over
$A$) is the ring $R:=A \ltimes M$ whose underlying group is
$A\times M$ with multiplication given by
$$(a,x)(a^{\prime},x^{\prime})=(aa^{\prime},ax^{\prime}+a^{\prime}x).$$

Recall that if $I$ is an ideal of $A$ and $M'$ is a submodule of
$M$ such that $IM\subseteq M'$, then $J:=I \ltimes M'$ is an ideal
of $R$; ideals of $R$ need not be of this form {\cite[Example
2.5]{KM}}. However, the form of the prime (resp., maximal) ideals
of $R$ is $p \ltimes M$, where $p$ is a prime (resp., maximal)
ideal of $A$ {\cite[Theorem 25.1(3)]{H}}. Suitable background on
trivial extensions is {\cite{G1, H, KM}}.

The following lemma is useful for  the construction of rings with
infinite  weak global dimension.
\begin{lemma} [{\cite[Lemma 2.3]{BKM}}] \label{T2} Let $K$ be a field, $E$ a nonzero
$K$-vector space, and $R:=K \ltimes E$. Then $\wgdim(R)=\infty$.
\end{lemma}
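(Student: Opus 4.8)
The plan is to realize $R = K \ltimes E$ as a local ring whose maximal ideal $\m = 0 \ltimes E$ satisfies $\m^2 = 0$, and then to pin down the zero-divisor structure precisely enough to invoke the machinery already assembled for arithmetical rings—most directly Proposition \ref{B10}, together with Lemma \ref{B9}, since these do not require the chained hypothesis, only a local ring with nonzero nilpotent maximal ideal. Concretely, first I would record that $R$ is local with maximal ideal $\m = 0 \ltimes E$ and residue field $R/\m \cong K$, that every element $(0,e)$ is nilpotent of square zero because $(0,e)(0,e') = (0, 0)$, so $\m^2 = 0$ and in particular $\m$ is a nonzero nilpotent ideal (nonzero since $E \neq 0$). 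This places $R$ squarely in the hypothesis of Proposition \ref{B10}.

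The second step is then immediate: by Proposition \ref{B10}, $\wdim_R(\m) = \infty$, and since $\wgdim(R) = \sup_M \wdim_R(M) \geq \wdim_R(\m)$, we conclude $\wgdim(R) = \infty$. Alternatively—and this is the route I would actually write out to keep the section self-contained—one can argue directly: since $\m^2 = 0$, the ideal $\m$ is an $R/\m$-vector space, hence $\m \cong \bigoplus R/\m$ as $R$-modules, so $\wdim_R(\m) = \wdim_R(R/\m)$; but Lemma \ref{B9} (which applies because $R$ is local and, as $E \neq 0$, not a field) gives $\wdim_R(R/\m) = \wdim_R(\m) + 1$. Combining the two displays yields $\wdim_R(\m) = \wdim_R(\m) + 1$, forcing $\wdim_R(\m) = \infty$, and therefore $\wgdim(R) = \infty$.

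I expect no serious obstacle here; the only points requiring a word of care are the identification $R/\m \cong K$ and the structure of the ideals of $R$ (that $\m = 0 \ltimes E$ is the unique maximal ideal), which follow from the description of prime and maximal ideals of a trivial extension recalled at the start of this section—namely that they all have the form $p \ltimes M$ with $p$ maximal in $A = K$, so $p = 0$. The verification that $\m \cong \bigoplus_{i \in \Lambda} R/\m$ as $R$-modules, where $\Lambda$ is a $K$-basis of $E$, is routine: the $R$-action on $\m = 0 \ltimes E$ factors through $R/\m = K$ and agrees with the $K$-vector space structure on $E$. Thus the proof is short, and its entire content is the reduction to Proposition \ref{B10} (equivalently, to the $\m \cong \bigoplus R/\m$ trick plus Lemma \ref{B9}).
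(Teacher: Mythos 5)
Your proof is correct, and it takes a genuinely different (and shorter) route than the one in the paper. The paper's proof is self-contained: it writes down the explicit free presentation $0\to 0\ltimes E^{(I)}\to R^{(I)}\overset{u}{\to} J\to 0$ of $J:=0\ltimes E$, identifies the kernel with $J^{(I)}$, proves that $J$ is not flat by hand via the flatness criterion $J R^{(I)}\cap (0\ltimes E^{(I)})=(0\ltimes E^{(I)})J$ of \cite[Theorem 3.55]{Ro}, and then applies Lemma \ref{B8} to get $\wdim(J)=\wdim(J^{(I)})+1=\wdim(J)+1$. You instead observe that $\m=0\ltimes E$ is a nonzero ideal of square zero in the local ring $R$ and quote Proposition \ref{B10}; your ``direct'' variant ($\m\cong\bigoplus R/\m$ plus Lemma \ref{B9}) is precisely the $n=2$ instance of the proof of that proposition. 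Both arguments are legitimate here, since Proposition \ref{B10} is established in Section \ref{G}, before this lemma, with no circularity; and at bottom they are the same computation, since the paper's presentation of $J$ is the direct sum over a basis of $E$ of copies of $0\to\m\to R\to R/\m\to 0$, and its non-flatness argument for $J$ plays the role of the Nakayama argument in Lemma \ref{B9}. What your reduction buys is brevity and the explicit realization of Lemma \ref{T2} as a special case of the Bazzoni--Glaz result (Proposition \ref{B10}/Theorem \ref{B7}); what the paper's version buys is independence from Section \ref{G}, reflecting the fact that the original source \cite{BKM} proves this lemma from scratch. The only points needing care in your write-up --- that $\m=0\ltimes E$ is the unique maximal ideal, that $R$ is not a field, and that the $R$-action on $\m$ factors through $R/\m\cong K$ so that $\m\cong(R/\m)^{(\Lambda)}$ and $\wdim(\m)=\wdim(R/\m)$ because $\Tor$ commutes with direct sums --- are all correctly flagged and routine.
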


\begin{proof}
First note that $R^{(I)}\cong A^{(I)} \ltimes E^{(I)}$. So let us
identify $R^{(I)}$ with $A^{(I)} \ltimes E^{(I)}$ as $R$-modules.
Now let $\{f_{i}\}_{i\in I}$ be a basis of $E$ and $J:=0 \ltimes
E$. Consider the $R$-map $u:R^{(I)}\longrightarrow J$ defined by
$\displaystyle u((a_{i},e_{i})_{i\in I})=(0,\sum \limits_{i\in
I}a_{i}f_{i})$. Then we have the following short exact sequence of
$R$-modules
\begin{center} $0\longrightarrow \Ker(u) \longrightarrow R^{(I)}\overset{u}
{\longrightarrow} J \longrightarrow 0$  \end{center} But $\Ker(u)=
0 \ltimes E^{(I)}$. Indeed, clearly $0 \ltimes E^{(I)} \subseteq
\Ker(u)$. Now suppose $u((a_{i},e_{i}))=(0,0)$. Then $\sum
\limits_{i\in I}a_{i}f_{i}=0$, hence $a_{i}=0$ for each $i$ as
$\{f_{i}\}_{i\in I}$ is a basis for $E$ and we have the equality.
Therefore the above exact sequence becomes
\begin{center} $0\longrightarrow 0\ltimes E^{(I)}\longrightarrow R^{(I)}\overset{u}
{\longrightarrow} J \longrightarrow 0$\hspace{1cm} ($\ast$) \end{center} We
claim that $J$ is not flat. Suppose not. Then by {\cite[Theorem 3.55]
{Ro}}, $0 \ltimes E^{(I)}\bigcap JR^{(I)}= (0 \ltimes E^{(I)})J$.
But $(0 \ltimes E^{(I)})J=0$. We use the above identification
to obtain $0= 0 \ltimes E^{(I)}\bigcap JR^{(I)}=(J)^{(I)}\bigcap
J^{(I)} = J^{(I)}= 0 \ltimes E^{(I)}$, absurd (since $E\neq 0$).

Now, by Lemma \ref {B8}, $\wdim(J)= \wdim(J^{(I)})+1=\wdim(J)+1$.
It follows that $\wgdim(R)= \wdim(J)= \infty$.
\end{proof}

Next, we announce the main result of this section.
\begin{thm} [{\cite[Theorem 3.1]{BKM}}]\label{T3}
Let $(A, \m)$ be a local ring, $E$ a nonzero
$\frac{A}{\m}$-vector space, and $R:=A\ltimes E$  the trivial ring
extension of $A$ by $E$. Then:
\begin{enumerate}
\item $R$ is a total ring of quotients and hence a Pr\"ufer ring.

\item $R$ is Gaussian if and only if $A$ is Gaussian.

\item $R$ is arithmetical if and only if $A:=K$ is a field and
$\dim_{K}(E)=1$.

\item  $\wgdim(R)\gneqq 1$. If $\m$ admits a minimal generating set,
then $\wgdim(R)$ is infinite. \end{enumerate}
\end{thm}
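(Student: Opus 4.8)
\textbf{Proof proposal for Theorem \ref{T3}.}
The plan is to establish the four items in order, using known structural facts about trivial extensions together with the lemmas already proved above. For item (1), I would first observe that the maximal ideals of $R=A\ltimes E$ are exactly of the form $\m\ltimes E$ (since $(A,\m)$ is local, this is the unique maximal ideal). The key point is that every element of $\m\ltimes E$ is a zero divisor: if $a\in\m$ then, since $E$ is an $\frac{A}{\m}$-vector space, $\m E=0$, so $(a,x)(0,e)=(0,ae)=(0,0)$ for any $0\neq e\in E$; and if $a$ is a unit, then $(a,x)$ is itself a unit of $R$. Hence $\Ze(R)=\m\ltimes E$ is the maximal ideal, so every element of $R$ is either a unit or a zero divisor, i.e. $R$ is a total ring of quotients. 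Since a total ring of quotients has no nonzero proper regular ideals (every regular element is a unit), it is vacuously Pr\"ufer.

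For item (2), I would invoke the characterization that $R=A\ltimes E$ is Gaussian if and only if $A$ is Gaussian; the forward direction follows because $A\cong R/(0\ltimes E)$ is a quotient of $R$ and the Gaussian property passes to quotients (a content equation in $R[X]$ reduces mod $0\ltimes E$ to one in $A[X]$). For the converse I would work locally: assuming $A$ Gaussian, I would use Lemma \ref{B4} (or a direct content computation) to check the content equation $c(fg)=c(f)c(g)$ for $f,g\in R[X]$, splitting coefficients into their $A$-part and $E$-part and using $\m E=0$ to control the cross terms; the $E$-components contribute only first-order terms that are annihilated by $\m$, so the equation on $R$ is governed by the corresponding equation on $A$. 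For item (3), recall from the implication diagram that arithmetical implies Gaussian, so by (2) we may assume $A$ is Gaussian; then $R$ arithmetical means $R=R_{\m\ltimes E}$ is a chained ring, so its ideals $0\ltimes E'$ (for $E'$ a subspace of $E$) are totally ordered, forcing $\dim_{A/\m}E\leq 1$, hence $=1$ since $E\neq0$; and the ideal $\m\ltimes E$ must be comparable with $0\ltimes E$, together with the chain condition on $A$'s ideals this forces $\m=0$, i.e. $A=K$ is a field. Conversely if $A=K$ is a field and $\dim_K E=1$, then $R=K\ltimes K$ has ideals $0,\ 0\ltimes K,\ R$, which are totally ordered, so $R$ is chained, hence arithmetical.

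For item (4), the first assertion $\wgdim(R)\gneqq1$ follows from Lemma \ref{B5}: $R$ is never reduced, since for $0\neq e\in E$ the element $(0,e)$ satisfies $(0,e)^2=(0,0)$ but $(0,e)\neq(0,0)$; a ring of weak global dimension $\leq1$ must be reduced, so $\wgdim(R)\geq2>1$. For the second assertion, suppose $\m$ admits a minimal generating set $\{a_\lambda\}_{\lambda\in\Lambda}$. I would then produce a module of infinite weak dimension by reducing to the field case handled in Lemma \ref{T2}: the quotient $R/(\m\ltimes E)\cong A/\m=:K$, and more usefully the ring $\bar R:=R/(0\ltimes\m E)=R$ itself already has $0\ltimes E$ as a $K$-vector space with $\m$ acting trivially. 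The cleanest route is to exhibit inside $R$ a copy of the configuration of Lemma \ref{T2}: namely to show that the ideal $J:=0\ltimes E$ satisfies $\wdim_R(J)=\wdim_R(J)+1$, by constructing a short exact sequence $0\to J^{(\Lambda)}\to R^{(\Lambda)}\to J\to0$ exactly as in the proof of Lemma \ref{T2}, where the map sends the $\lambda$-th basis vector of $R^{(\Lambda)}$ to the generator of $E$ corresponding to $a_\lambda$ — here the hypothesis that $\m$ has a minimal generating set guarantees the relevant index set is the right size so that the kernel computation (using $\m E=0$ and purity/flatness criteria as in \cite[Theorem 3.55]{Ro}) goes through and $J$ is shown not to be flat. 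Then Lemma \ref{B8} gives $\wdim(J)=\wdim(J^{(\Lambda)})+1=\wdim(J)+1$, forcing $\wdim(J)=\infty$ and hence $\wgdim(R)=\infty$.

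The main obstacle I anticipate is item (4)'s second half: one must see precisely why the existence of a \emph{minimal} generating set for $\m$ (as opposed to an arbitrary one) is exactly what makes the kernel in the analogue of sequence $(\ast)$ split off as $J^{(\Lambda)}$ rather than something larger, so that the self-referential equation $\wdim(J)=\wdim(J)+1$ emerges. Matching the combinatorics of the generating set of $\m$ with the vector-space structure of $E$ — and verifying $J$ is not flat via the intersection condition $0\ltimes E^{(\Lambda)}\cap JR^{(\Lambda)}=(0\ltimes E^{(\Lambda)})J=0$ — is the delicate point; items (1)--(3) are essentially bookkeeping with the multiplication rule and the relation $\m E=0$, plus the already-established implication diagram and Lemmas \ref{B4} and \ref{B5}.
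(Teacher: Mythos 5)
Items (1) and (2) of your proposal follow the paper's proof essentially verbatim. For the first assertion of (4) you take a genuinely different and shorter route: the paper proves directly that $J:=0\ltimes E$ is not flat, via the exact sequence $0\to(\m\ltimes E)^{(I)}\to R^{(I)}\to J\to 0$ and the purity criterion \cite[Theorem 3.55]{Ro}, whereas you observe that $(0,e)^{2}=(0,0)$ makes $R$ non-reduced and invoke Lemma \ref{B5}. Both are correct; yours is cheaper, but the paper's computation is not wasted, since that same sequence is the input to Lemma \ref{B8} in the second half of (4).

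In (3) your deduction that $A$ is a field does not work as written. The ideals $0\ltimes E$ and $\m\ltimes E$ are \emph{always} comparable ($0\ltimes E\subseteq\m\ltimes E$), so their comparability carries no information, and ``the chain condition on $A$'s ideals'' does not force $\m=0$. The correct argument compares the principal ideals generated by $(a,0)$ and $(0,e)$ for $0\neq a\in\m$ and $0\neq e\in E$: an equation $(a,0)=(a',e')(0,e)=(0,a'e)$ contradicts $a\neq0$, and $(0,e)=(a'',e'')(a,0)=(a''a,0)$ contradicts $e\neq0$, so these two ideals are incomparable and $R$ is not chained. Your argument that $\dim_{K}E\leq1$, and the converse direction, are fine.

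The serious gap is in the second half of (4). The sequence you propose, $0\to J^{(\Lambda)}\to R^{(\Lambda)}\to J\to 0$, is not exact when $\m\neq0$: since $(a,x)(0,f)=(0,af)$ and $af=0$ exactly when $a\in\m$, the kernel of the map sending the $\lambda$-th basis vector to $(0,f_{\lambda})$ is $(\m\ltimes E)^{(\Lambda)}$, which strictly contains $J^{(\Lambda)}$; no choice of generating set changes this, so the self-referential equation $\wdim(J)=\wdim(J)+1$ cannot come out of a single resolution of $J$. The paper uses two sequences. First, $0\to(\m\ltimes E)^{(I)}\to R^{(I)}\to J\to 0$ together with Lemma \ref{B8} gives $\wdim(J)=\wdim(\m\ltimes E)+1$. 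Second, one covers $\m\ltimes E$ itself by a free module $R^{(L)}$ using a \emph{minimal} generating set $(b_{i},g_{i})_{i\in L}$ of $\m\ltimes E$; minimality is precisely what forces $\Ker(v)\subseteq(\m\ltimes E)^{(L)}$ (a coordinate outside $\m\ltimes E$ would be a unit, contradicting minimality), whence $\Ker(v)=(V\ltimes0)\oplus(0\ltimes E^{(L)})$ contains $J^{(L)}$ as a direct summand. Then $\wdim(J)=\wdim(J^{(L)})\leq\wdim(\Ker(v))\leq\wdim(\m\ltimes E)=\wdim(J)-1$, which forces $\wdim(J)=\infty$. So the minimal generating set enters in resolving $\m\ltimes E$, not $J$, and this second resolution is the idea missing from your sketch.
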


\begin{proof}
(1) Let $(a,e)\in R$. Then either $a\in \m$ in which
case we get $(a,e)(0,e)=(0,ae)=(0,0)$; or $a \notin \m$ which implies $a$
is a unit and hence $(a,e)(a^{-1},-a^{-2}e)=(1,0)$, the unity of
$R$. Therefore $R$ is a total ring of quotients and hence a Pr\"ufer
ring.

(2) Suppose that $R$ is Gaussian. Then, since $A\cong
\frac{R}{0\ltimes E}$ and the Gaussian property is stable under
factor rings, $A$ is Gaussian.

Conversely, assume that $A$ is Gaussian and let $F:=\sum
(a_{i},e_{i})X^{i}$ be a polynomial in $R[X]$. Then if $a_{i}
\notin \m$ for some $i$ and in this case $(a_{i},e_{i})$ is
invertible since we have $(a_{i},e_{i})(a^{-1}_{i},-a^{-2}e_{i})=(1,0)$. We
claim that $F$ is Gaussian. Indeed, for any  $G\in R[X]$,  we have
$c(F)c(G)=Rc(G)=c(G)\subseteq c(FG)$. The reverse inclusion always
holds. If $a_{i} \in \m$ for each $i$, let $G:=\sum
(a_{j}^{\prime},e_{j}^{\prime})X^{j}\in R[X]$. We may assume,
without loss of generality, that $a_{j}^{\prime} \in \m$\ for each
$j$ (otherwise, we return to the first case) and let $f:=\sum
a_{i}X^{i}$ and $g:=\sum a_{j}^{\prime}X^{j}$ in $A[X]$. Then
$c(FG)=c(fg)\ltimes c(fg)E$. But since $E$ is an
$\frac{A}{\m}$-vector space, $\m E=0$ yields $c(FG)=c(fg)\ltimes
0= c(f)c(g)\ltimes 0=c(F)c(G)$, since $A$ is Gaussian. Therefore
$R$ is Gaussian, as desired.

(3) Suppose that $R$ is arithmetical. First we claim that $A$ is
a field. On the contrary, assume that $A$ is not a field. Then $\m
\neq0$, so there is $ a\neq 0 \in \m$. Let $e\neq 0 \in E$. Since
$R$ is a local arithmetical ring (i.e., chained ring), either
$(a,0)=(a^{\prime},e^{\prime})(0,e)=(0,a^{\prime}e)$ for some
$(a^{\prime},e^{\prime})\in R$ which contradicts $a\neq 0$; or
$(0,e)=(a^{\prime\prime},e^{\prime\prime})(a,0)=(a^{\prime} a,0)$
for some $(a^{\prime\prime},e^{\prime\prime})\in R$ which
contradicts $e\neq 0$. Hence  $A$ is a field. Next, we show that
$\dim_{K}(E)=1$. Let $e,e^{\prime}$ be two nonzero vectors in $E$.
We claim that they are linearly dependent. Indeed, since $R$ is a
local  arithmetical ring, either
$(0,e)=(a,e^{\prime\prime})(0,e^{\prime})=(0,ae^{\prime})$ for
some $(a,e^{\prime\prime})\in R$, hence $e=ae^{\prime}$;  or
similarly if $(0,e^{\prime})\in (0,e)R$. Consequently,
$\dim_{K}(E)=1$.

Conversely, let $J$ be a nonzero ideal in $K\ltimes K$ and let
$(a,b)$ be a nonzero element of $J$. So $(0,a^{-1})(a,b)=(0,1)\in J$. Hence
$0\ltimes K \subseteq J$. But $0\ltimes K$ is maximal since $0$ is
the maximal ideal in $K$. So the ideals of $K\ltimes K$ are
$(0,0)K\ltimes K$, $0\ltimes K=R(0,1)$, and $K\ltimes K$.
Therefore $K\ltimes K$ is a principal ring and hence arithmetical.

(4) First $\wgdim(R)\gneqq 1$. Let $J:=0\ltimes E$ and
$\{f_{i}\}_{i\in I}$ be a basis of the $\frac{A}{\m}$-vector space
$E$. Consider the map $u:R^{(I)}\longrightarrow J$ defined by
$u((a_{i},e_{i})_{i\in I})=(0,\sum \limits_{i\in I} a_{i}f_{i})$.
Here we are using the same identification that has been used in
Lemma \ref {T2}. Then clearly $\Ker(u)=(\m \ltimes E)^{(I)}$.
Hence we have the short exact sequence of $R$-modules

\begin{center} $0\longrightarrow (\m \ltimes E)^{(I)}
\longrightarrow R^{(I)}\overset{u} {\longrightarrow} J
\longrightarrow 0$ \hspace{1cm} ($1$) \end{center}

We claim that $J$ is not flat. Otherwise, by \cite
[Theorem3.55]{Ro}, we have  $$J^{(I)}=(\m \ltimes E)^{(I)}\cap
JR^{(I)}=J(\m \ltimes E^{(I)})=0.$$ Hence, by \cite [Theorem
2.44]{Ro}, $\wgdim(R)\gneqq 1$.

Next, assume that $\m$ admits a minimal generating set. Then $\m
\ltimes E$ admits a minimal generating set (since $E$ is a vector
space). Now let $(b_{i},g_{i})_{i\in L}$ be a minimal generating
set of $\m \ltimes E$. Consider the $R$-map
$v:R^{(L)}\longrightarrow \m \ltimes E$ defined by
$v((a_{i},e_{i})_{i\in L})=\sum \limits_{i\in
L}(a_{i},e_{i})(b_{i},g_{i})$. Then we have the exact sequence

\begin{center} $0\longrightarrow \Ker(v)
\longrightarrow R^{(L)}\overset{v} {\longrightarrow} \m \ltimes E
\longrightarrow 0$ \hspace{1cm} ($2$) \end{center} We claim that
$\Ker(v)\subseteq (\m \ltimes E)^{(L)}$. On the contrary, suppose
that there is $x=((a_{i},e_{i})_{i\in L})\in \Ker(v)$ and $x
\notin (\m \ltimes E)^{(L)}$. Then $\sum \limits_{i\in L}
(a_{i},e_{i})(b_{i},g_{i})=0$ and as $x \notin (\m \ltimes
E)^{(L)}$, there is $(a_{j},e_{j})$ with $a_{j} \notin \m$. So
that $(a_{j},e_{j})$ is a unit, which contradicts the minimality
of $(b_{i},g_{i})_{i\in L}$. It follows that $$\Ker(v)=V\ltimes
E^{(L)}=(V\ltimes 0)\bigoplus(0\ltimes E^{(L)})=(V\ltimes
0)\bigoplus J^{(L)}$$ where $\displaystyle V:=\{(a_{i})_{i\in
L}\in \m^{i}\ |\ \sum \limits_{i\in L}a_{i}b_{i}=0\}$. Indeed, if
$x\in \Ker(v)$, then $x=(a_{i},b_{i})_{i\in L}$ where $a_{i}\in
\m$, $b_{i}\in E$, with  $\displaystyle \sum \limits_{i\in
L}a_{i}b_{i}=0$, hence $\Ker(v)\subseteq V\ltimes E^{(L)}$. The
other inclusion is trivial. Now, by Lemma \ref{B8} applied to
($1$), we get $$\wdim(J)=\wdim((\m \ltimes E)^{I})+1=\wdim(\m
\ltimes E)+1.$$ On the other hand, from ($2$) we obtain
$$\wdim(J)\leq \wdim(V \ltimes 0 \oplus J^{L})=\wdim(\Ker(v)) \leq
\wdim(\m \ltimes E).$$ It follows that $$\wdim(J)\leq
\wdim(J)-1.$$ Consequently, $\wgdim(R)=\wdim(J)=\infty.$
\end{proof}

Next, we give examples of non-arithmetical Gaussian rings.

\begin{example} \rm \label{T4}
\begin{enumerate}
\item Let $p$ be a prime number. Then ($\mathbb{Z}_{(p)}, p\mathbb{Z}_{(p)}$) is a
non-trivial valuation domain. Hence $\mathbb{Z}_{(p)} \ltimes
\frac{\mathbb{Z}}{p\mathbb{Z}}$ is a non-arithmetical Gaussian
total ring of quotients by Theorem \ref{T3}.

\item Since $\dim_{\mathbb{R}}(\mathbb{C})=2\gneqq 1$, $\mathbb{R}
\ltimes \mathbb{C}$ is a non arithmetical Gaussian total ring of
quotient. In general, if $K$ is a field and $E$ is a $K$-vector
space with $\dim_{K}(E)\gneqq 1$, then $R:=K\ltimes E$ is a
non-arithmetical Gaussian total ring of quotients by Theorem
\ref{T3}.
\end{enumerate}
\end{example}

Next, we provide examples of  non-Gaussian total rings of
quotients and hence non-Gaussian Pr\"ufer rings.
\begin{example} \rm
Let $(A,\m)$ be a non-valuation  local domain. By Theorem
\ref{T3}, $R:=A\ltimes \frac{A}{\m}$ is a non-Gaussian total ring
of quotients, hence a non-Gaussian Pr\"ufer ring.
\end{example}
The following is an illustrative example for Theorem \ref {A4}.

\begin{example} \rm
Let $R:=\mathbb{R} \ltimes \mathbb{R}$. Then  $R$ is  a local ring
with maximal ideal $0 \ltimes \mathbb{R}$ and $\Ze(R)=0 \ltimes
\mathbb{R}$. Further, $R$ is arithmetical by Theorem \ref {T3}. By
Osofsky's Theorem (Theorem \ref{A4}) or by  Lemma \ref{T2},
$\wgdim(R)=\infty$.
\end{example}

Now we give an example of a non-coherent local Gaussian ring with
nilpotent maximal ideal and infinite weak global dimension (i.e.,
an illustrative example for Theorem \ref{B7}).
\begin{example} \rm

Let $K$ be a field and  $X$  an indeterminate over $K$ and let
$R:=K\ltimes K[X]$. Then:
\begin{enumerate}
\item $R$ is a non-arithmetical Gaussian ring since $K$ is Gaussian and $\dim_{K} (K[X])=\infty$ by Theorem
\ref{T3}.
\item $R$ is not a coherent ring since  $\dim_{K} (K[X])=\infty$ by \cite[Theorem
2.6]{KM}.
\item $R$ is local with maximal ideal $\m =0 \ltimes K[X]$
by \cite[Theorem 25.1(3)]{H}. Also $\m$ is nilpotent since
$\m^{2}=0$. Therefore, by Theorem \ref{B7}, $\wgdim(R)= \infty$.

\end{enumerate}
\end{example}

\end{section}

\begin{section}{Weak global dimension of fqp-rings}\label{Fq}

\noindent Recently, Abuhlail, Jarrar, and Kabbaj studied  commutative rings
in which every finitely generated ideal is quasi-projective
(fqp-rings). They investigated the correlation of fqp-rings with
well-known Pr\"ufer conditions; namely, they  proved that
fqp-rings  stand strictly between the two classes of arithmetical
rings and Gaussian rings {\cite[Theorem 3.2]{AJK}}. Also they
generalized Osofsky's Theorem on the weak global dimension of
arithmetical rings (and partially resolved Bazzoni-Glaz's related
conjecture on Gaussian rings) by proving that the weak global
dimension of an fqp-ring  is $0$, $1$, or $\infty$ {\cite[Theorem
3.11]{AJK}}. In this section, we will give the proofs of the above
mentioned results. Here too, the needed examples in this section
will be constructed by using trivial ring extensions. We start by
recalling some definitions.
\begin{definition} \label {F1} \rm
\begin{enumerate}
\item Let $M$ be an $R$-module. An $R$-module $M^{\prime}$ is
$M$-projective if the map $\psi
:\Hom_{R}(M^{\prime},M)\longrightarrow
\Hom_{R}(M^{\prime},\frac{M}{N})$ is surjective for every
submodule $N$ of $M$.
\item $M^{\prime}$ is quasi-projective if it is
$M^{\prime}$-projective.
\end{enumerate}
\end{definition}

\begin{definition} \label {F2} \rm
A commutative ring $R$ is said to be an fqp-ring if every finitely
generated ideal of $R$ is quasi-projective.
\end{definition}

The following theorem establishes the relation between the class
of fqp-rings and the two classes of arithmetical and Gaussian
rings.

\begin{thm} [{\cite[Theorem 3.2]{AJK}}]\label {F3}
For a ring $R$, we have
$$ R\ arithmetical\  \Rightarrow\ R\ fqp-ring\ \Rightarrow\ R\
Gaussian$$

where the implications are irreversible in general.
\end{thm}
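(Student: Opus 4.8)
The plan is to prove the two implications separately, each by reducing to the local case and then exploiting the structure theorems already available. For the first implication, suppose $R$ is arithmetical and let $I$ be a finitely generated ideal. I would show $I$ is quasi-projective by checking the defining lifting property locally: since quasi-projectivity of a finitely generated module can be tested after localization at maximal ideals (the relevant $\Hom$ functors commute with localization for finitely presented modules over such rings), it suffices to treat $R=R_{\m}$ a chained ring. But over a chained ring every finitely generated ideal is principal, say $I=aR$, hence $I\cong R/(0:a)$ is cyclic; a cyclic module $R/J$ is always quasi-projective because any homomorphism $R/J\to (R/J)/(L/J)=R/L$ lifts along $R\to R/J$ (lift $1$ to a preimage in $R/J$, which works since $J\subseteq L$ forces $J$ to annihilate the target). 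So the first implication comes down to the localization principle plus the triviality of the chained-ring case.

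For the second implication, suppose $R$ is an fqp-ring; I want $c(fg)=c(f)c(g)$ for all $f,g\in R[X]$. Again this is a local condition on $R$, so I may assume $(R,\m)$ is local. The key point to establish is the local structure: over a local fqp-ring, every finitely generated ideal is generated by two elements (or, more precisely, the ideals are ``uniserial-like'' enough that Lemma \ref{B4}-style content manipulations apply). Concretely, I would argue that for $a,b\in R$ the ideal $(a,b)$, being quasi-projective and 2-generated, satisfies a relation that forces $(a,b)^2=(a^2,b^2)$; the standard trick is that a quasi-projective 2-generated ideal $I$ with a non-split presentation $R^2\twoheadrightarrow I$ yields, via the endomorphism lifting property, an idempotent-type relation among $a,b$ that is exactly what one needs. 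Once $I^2=(a^2,b^2)$ is known for all 2-generated $I$, the Gaussian content equation for arbitrary $f,g$ follows by the same content-ideal computation used in the proof of Lemma \ref{B4} (writing $c(f)c(g)$ and $c(fg)$ in terms of the pairwise products and reducing).

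The irreversibility claims I would handle by pointing to explicit examples: a Gaussian ring that is not an fqp-ring (e.g.\ a suitable trivial extension $K\ltimes E$ with $\dim_K E\geq 2$, which is Gaussian by Theorem \ref{T3}(2) but fails the 2-generated structure forced by the fqp-property), and an fqp-ring that is not arithmetical (an example with a finitely generated ideal that is quasi-projective but not projective-locally-principal, again realized as a trivial ring extension — this is \cite[Example 3.9]{AJK}).

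The main obstacle will be the second implication, specifically pinning down the local structure of fqp-rings tightly enough to run the Gaussian computation. Unlike the arithmetical case, a local fqp-ring need not be chained, so ``finitely generated $\Rightarrow$ principal'' is unavailable; one must instead extract from quasi-projectivity of $(a,b)$ the precise algebraic identity relating $a$ and $b$. Getting from the abstract lifting property $\psi\colon\Hom(I,I)\to\Hom(I,I/N)$ surjective to a usable equation — presumably by feeding in the quotient maps $I\to I/aR$ and $I\to I/bR$ and analyzing the lifts as $2\times 2$ matrices over $R$ acting on the presentation — is the delicate step, and it is what separates the fqp class from the merely Gaussian class. Everything downstream (the content-ideal manipulation, the induction on the number of generators, the localization reductions) is routine once that identity is in hand.
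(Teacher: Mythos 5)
Your first implication is essentially the paper's argument (localize, note that finitely generated ideals of a chained ring are principal, hence cyclic, hence quasi-projective), though your parenthetical justification for the localization step is too quick: $I$ is only finitely generated, not finitely presented, since no coherence is assumed, so $\Hom$ does not automatically commute with localization. The paper proves $(\Hom_{R}(I,I))_{p}\cong \Hom_{R_{p}}(I_{p},I_{p})$ by hand, using precisely the fact that $I_{p}$ is principal to get surjectivity; that step is repairable along the same lines. The second implication, however, is where the real content lies, and your proposal does not supply it. The step you defer as ``the main obstacle'' --- extracting a usable identity from quasi-projectivity of $(a,b)$ --- is the whole theorem. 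The paper's Lemma \ref{F6} proves that in a local fqp-ring, if $(a)$ and $(b)$ are incomparable then $(a)\cap(b)=0$, $a^{2}=b^{2}=ab=0$, and $\Ann(a)=\Ann(b)$, via Tuganbaev's lemma producing endomorphisms $f_{1},f_{2}$ of $I=(a,b)$ with $f_{1}(I)\subseteq(a)$, $f_{2}(I)\subseteq(b)$ and $f_{1}+f_{2}=1_{I}$, followed by a relative-projectivity argument on the resulting direct sum. Moreover, your intermediate target is the wrong identity: $(a,b)^{2}=(a^{2},b^{2})$ is a \emph{consequence} of Gaussianity (Lemma \ref{B4}) but is not known to imply it. The criterion actually needed is \cite[Theorem 2.2(d)]{BG2}: $(a,b)^{2}=(a^{2})$ or $(b^{2})$ (a single square generates), together with the condition that $(a,b)^{2}=(a^{2})$ and $ab=0$ force $b^{2}=0$. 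The paper's case analysis ($I$ principal, versus $(a),(b)$ incomparable, where Lemma \ref{F6} makes everything square to zero) verifies exactly this criterion; running ``the same content computation as Lemma \ref{B4}'' in reverse does not.

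Your irreversibility example for ``Gaussian but not fqp'' is incorrect: $K\ltimes E$ with $K$ a field is local with maximal ideal $0\ltimes E$, whose square is zero, hence it \emph{is} an fqp-ring by Lemma \ref{F10} (equivalently Theorem \ref{F7}), regardless of $\dim_{K}E$. When $\dim_{K}E\geq 2$ it is the \emph{arithmetical} property that fails, so this ring separates fqp from arithmetical, not Gaussian from fqp. The example that does the latter job is $\R[X]_{(X)}\ltimes\R$ (Example \ref{F12}): Gaussian by Theorem \ref{T3}(2) because the valuation domain $\R[X]_{(X)}$ is Gaussian, but not an fqp-ring by Theorem \ref{F7} because the maximal ideal of $\R[X]_{(X)}$ has nonzero square.
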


The proof of this theorem needs  the following results.

\begin{lemma} [{\cite[Lemma 2.2]{AJK}}]\label{F4}
Let $R$ be a ring and let $M$ be a finitely generated $R$-module. Then
$M$ is quasi-projective if and only if $M$ is projective over
$\frac{R}{\Ann(M)}$. $\Box$
\end{lemma}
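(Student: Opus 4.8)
The plan has an easy half and a hard half. For the implication ``$M$ projective over $\frac{R}{\Ann(M)}$ $\Rightarrow$ $M$ quasi-projective'' one does not even need $M$ finitely generated. Write $\bar R:=\frac{R}{\Ann(M)}$. Given a submodule $N\subseteq M$, the quotient $M/N$ is annihilated by $\Ann(M)$, hence is a $\bar R$-module, and every $R$-linear map $f\colon M\to M/N$ is automatically $\bar R$-linear; since $M\to M/N$ is an $\bar R$-linear epimorphism and $M$ is projective over $\bar R$, the map $f$ lifts to an $\bar R$-linear (hence $R$-linear) endomorphism of $M$. Thus $\Hom_R(M,M)\to\Hom_R(M,M/N)$ is surjective for every $N\subseteq M$, which is precisely the assertion that $M$ is $M$-projective, i.e. quasi-projective. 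For the converse, I would first reduce to the faithful case: for any $\bar R$-module $X$, an $R$-linear map $M\to X$ is automatically $\bar R$-linear, so $\Hom_R(M,X)=\Hom_{\bar R}(M,X)$; taking $X=M$ and $X=M/N$ shows that $M$ is quasi-projective over $R$ iff it is quasi-projective over $\bar R$. Replacing $R$ by $\bar R$, we may assume $\Ann(M)=0$, and the target becomes: a finitely generated faithful quasi-projective module $M$ is projective. Since $M$ is finitely generated, fix an epimorphism $\pi\colon R^{\,n}\twoheadrightarrow M$; it suffices to split $\pi$, for then $M$ is a direct summand of $R^{\,n}$.

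The idea for splitting $\pi$ is to show that $M$ is $R^{\,n}$-projective, so that $\mathrm{id}_M$ lifts through $\pi$. Two standard facts about relative projectivity handle the bookkeeping: (i) an $M$-projective (i.e. quasi-projective) module is $M^{(k)}$-projective for every finite $k$ (Azumaya's lemma), and (ii) a module that is $P$-projective is $P'$-projective for every direct summand $P'$ of $P$. Hence it is enough to exhibit $R$ as a direct summand of some finite power $M^{(k)}$: then $R^{\,n}$ is a direct summand of $M^{(kn)}$, so by (i)--(ii) $M$ is $R^{\,n}$-projective, $\pi$ splits, and $M$ is projective. Exhibiting $R$ as a summand of $M^{(k)}$ is exactly the statement that $M$ is a generator, equivalently that its trace ideal $\tau(M):=\sum_{g\in\Hom_R(M,R)}g(M)$ equals $R$; moreover, once one knows the weaker fact $\tau(M)M=M$, a Nakayama/determinant argument using that $M$ is finitely generated and faithful upgrades it to $\tau(M)=R$.

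The main obstacle is this generator claim: \emph{a finitely generated faithful quasi-projective module has trace ideal $R$ (equivalently, $\tau(M)M=M$)}. All three hypotheses are needed: finite generation feeds the Nakayama step, faithfulness is what excludes the failures, and quasi-projectivity is what actually forces enough homomorphisms $M\to R$ to exist. Faithfulness together with finite generation alone do not suffice --- $M=(X,Y)\subseteq K[X,Y]$ is finitely generated and faithful with $\tau(M)=(X,Y)\neq K[X,Y]$, and indeed it fails to be quasi-projective, not being projective over $K[X,Y]/\Ann(M)=K[X,Y]$. In the write-up I would settle this step either by invoking the structure theory of self-projective modules (the Wu--Jans / Fuller--Hill / Koehler circle of results, also in Wisbauer's treatment) or by a direct analysis: present $M$ as $R^{\,n}/K$ and combine a study of the fully invariant submodules of $R^{\,n}$ with a localization argument at the minimal primes of $R$. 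The remaining ingredients --- the two reductions, facts (i) and (ii), the Nakayama step, and the final splitting --- are all routine.
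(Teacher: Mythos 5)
First, a point of comparison: the paper offers no proof of this lemma at all --- it is imported verbatim from [AJK, Lemma 2.2] and stamped with a $\Box$ --- so the only question is whether your argument stands on its own. Your easy direction and your reduction to the case $\Ann(M)=0$ are correct and complete. The hard direction, however, has a genuine gap. After the reduction, everything hinges on the claim that a finitely generated faithful quasi-projective module is a generator (equivalently, that $\tau(M)M=M$), and you do not prove this claim: you flag it yourself as ``the main obstacle'' and then propose to settle it either by an unspecified appeal to the Wu--Jans/Fuller--Hill/Koehler literature or by a ``direct analysis'' whose ingredients (fully invariant submodules of $R^{n}$, localization at minimal primes) are neither carried out nor obviously sufficient. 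Note that once $M$ is known to be projective, finite generation and faithfulness give $\tau(M)=R$ for free; so in the faithful case the generator claim is essentially equivalent to the theorem you are trying to prove. You have reformulated the problem rather than solved it.

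The gap is avoidable, and the detour through trace ideals is the wrong turn. Relative projectivity descends not only to direct summands of the test module but to arbitrary submodules and factor modules of it (this is the same standard package as your facts (i) and (ii); see Anderson--Fuller, Prop.\ 16.12). Now write $M=Rx_{1}+\cdots+Rx_{n}$ and set $x:=(x_{1},\dots,x_{n})\in M^{(n)}$. Since $\Ann(x)=\bigcap_{i}\Ann(x_{i})=\Ann(M)=0$, the cyclic submodule $Rx\subseteq M^{(n)}$ is isomorphic to $R$. By (i), $M$ is $M^{(n)}$-projective; by descent to the submodule $Rx$, $M$ is $R$-projective; by (i) again, $M$ is $R^{(n)}$-projective; hence your chosen epimorphism $\pi\colon R^{(n)}\twoheadrightarrow M$ splits and $M$ is projective. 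No generator property, Nakayama step, or determinant trick is needed, and the only hypotheses used are exactly finite generation (to form $x$ and to get $\Ann(x)=\Ann(M)$) and quasi-projectivity. I recommend replacing the second half of your argument with this.
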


\begin{lemma}[{\cite[Corollary 1.2]{FH}}] \label{F41} Let ${M_{i}}_{1\leq i \leq n}$ be a family of $R$-modules.
Then:\\ $\bigoplus_{i=1}^{n}M_{i}$ is quasi-projective if and only if
$M_{i}$ is $M_{j}$-projective $\forall$ $i,j\in \{1,\ 2,\ ...,\ \}$.
$\Box$

\end{lemma}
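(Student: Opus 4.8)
The plan is to abbreviate $M:=\bigoplus_{i=1}^{n}M_{i}$ and to note that, by Definition \ref{F1}, ``$\bigoplus_{i=1}^{n}M_{i}$ is quasi-projective'' says exactly that $M$ is $M$-projective. The whole argument then reduces to three standard transfer principles for relative projectivity, which I would isolate first since they carry the genuine content (each is read off from the defining surjectivity of the $\Hom$-maps via a short diagram chase): (P1) relative projectivity is inherited by direct summands of the target, so that if $U$ is $N$-projective and $N=N'\oplus N''$ then $U$ is $N'$-projective; (P2) the source splits over direct sums, i.e. $\bigoplus_{k}U_{k}$ is $N$-projective if and only if each $U_{k}$ is $N$-projective; and (P3) for a submodule $K\le N$, if $U$ is both $K$-projective and $(N/K)$-projective then $U$ is $N$-projective, whence by induction $U$ is $N_{1}\oplus\cdots\oplus N_{m}$-projective as soon as $U$ is $N_{j}$-projective for each $j$.

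For the forward implication I would assume $M$ is $M$-projective. Each $M_{j}$ is a direct summand of $M$ (with complement $\bigoplus_{i\neq j}M_{i}$), so (P1) gives that $M$ is $M_{j}$-projective. Applying (P2) to the decomposition $M=\bigoplus_{i}M_{i}$ in the source variable then yields that every $M_{i}$ is $M_{j}$-projective; since $j$ is arbitrary, this is precisely the asserted conclusion.

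For the converse I would assume $M_{i}$ is $M_{j}$-projective for all $i,j$. Fixing $j$ and invoking (P2) for $M=\bigoplus_{i}M_{i}$ produces that $M$ is $M_{j}$-projective, for each $j\in\{1,\dots,n\}$. Now (P3), iterated across the finite index set, upgrades this to the statement that $M$ is $\bigoplus_{j=1}^{n}M_{j}=M$-projective, i.e. $M$ is quasi-projective. This closes the equivalence.

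The main obstacle is not the formal bookkeeping above but the verification of (P1)--(P3) from the definition. The decisive one is (P3): given $K\le N$ with $U$ both $K$- and $(N/K)$-projective, one must lift an arbitrary $f\colon U\to N/L$ through the surjection $N\twoheadrightarrow N/L$ by splitting the problem along $K$, first solving it modulo $K$ using $(N/K)$-projectivity and then correcting the error by a $K$-valued lift supplied by $K$-projectivity. It is exactly in iterating (P3) over the index set that finiteness of $\{1,\dots,n\}$ is indispensable: for an infinite family the implication ``$M_{j}$-projective for every $j$'' $\Rightarrow$ ``$\bigoplus_{j}M_{j}$-projective'' can fail without a finite-generation hypothesis on the source. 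I would therefore record (P3) only in the finite form actually needed and flag this as the single point where the hypothesis $1\le i\le n$ enters.
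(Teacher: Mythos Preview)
The paper does not prove this lemma at all: it is recorded with a citation to Fuller--Hill and closed with a $\Box$, so there is no ``paper's own proof'' to compare against. Your sketch is precisely the standard argument behind \cite[Corollary~1.2]{FH} (also in Anderson--Fuller, Proposition~16.12, and Wisbauer, 18.2): reduce to the three transfer principles (P1)--(P3), use (P1)+(P2) for the forward direction and (P2)+(P3) for the converse, with finiteness entering only through the iteration of (P3).

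One remark on (P3). Your description ``solve modulo $K$, then correct by a $K$-valued lift'' is exactly right when the extension $0\to K\to N\to N/K\to 0$ is \emph{split}, because a section $N/K\to N$ lets you pull the $(N/K)$-lift back into $N$ before computing the error in $(K+L)/L\cong K/(K\cap L)$. Since the only extensions you actually need are the split ones
\[
0\longrightarrow M_{1}\longrightarrow M_{1}\oplus\big(M_{2}\oplus\cdots\oplus M_{n}\big)\longrightarrow M_{2}\oplus\cdots\oplus M_{n}\longrightarrow 0,
\]
this simple version of (P3) already suffices for the lemma. For a non-split extension the ``correct the error'' step does not go through so directly: one has no map $U\to N$ to compare with, and the usual proof instead forms the pullback $P=N\times_{N/K}U$ along the $(N/K)$-lift and uses $K$-projectivity inside $P$. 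If you want your write-up to cover the general extension form of (P3) you should insert that pullback step; if you only claim the direct-sum case, your sketch is complete as it stands.
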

\begin{lemma} [{\cite[Lemma 3.6]{AJK}}] \label{F5}
Let $R$ be an fqp-ring. Then $S^{-1}R$ is an fqp-ring, for any
multiplicative closed subsets of $R$.
\end{lemma}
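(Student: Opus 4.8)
The statement is a standard "localization preserves the property" result, and the natural strategy is to reduce everything to the module-theoretic characterization of quasi-projectivity already recorded in Lemma \ref{F4}. First I would fix a finitely generated ideal $\mathfrak{a}$ of $S^{-1}R$; write $\mathfrak{a}=S^{-1}I$ for a finitely generated ideal $I$ of $R$ (one can clear denominators of a finite generating set). Since $R$ is an fqp-ring, $I$ is quasi-projective, so by Lemma \ref{F4} $I$ is projective over $R/\Ann_R(I)$. The goal is to deduce, again via Lemma \ref{F4}, that $S^{-1}I$ is projective over $S^{-1}R/\Ann_{S^{-1}R}(S^{-1}I)$.

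The key steps, in order, are: (1) identify the annihilator after localization, i.e. show $\Ann_{S^{-1}R}(S^{-1}I)=S^{-1}\Ann_R(I)$ — this uses that $I$ is finitely generated, so annihilation is detected on a finite generating set and denominators can be absorbed; (2) identify the ring, i.e. $S^{-1}R/S^{-1}\Ann_R(I)\cong \bar{S}^{-1}(R/\Ann_R(I))$ where $\bar S$ is the image of $S$, which is the standard exactness of localization applied to $0\to \Ann_R(I)\to R\to R/\Ann_R(I)\to 0$; (3) invoke that projectivity is preserved under base change along the ring map $R/\Ann_R(I)\to \bar S^{-1}(R/\Ann_R(I))$ (localization is flat, and more directly $\bar S^{-1}(-)$ sends projectives to projectives since it sends free modules to free modules and is additive and exact), concluding that $S^{-1}I\cong \bar S^{-1}I$ is projective over this quotient ring; (4) apply Lemma \ref{F4} in the reverse direction to conclude $S^{-1}I$ is quasi-projective over $S^{-1}R$. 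Since $\mathfrak{a}$ was an arbitrary finitely generated ideal of $S^{-1}R$, this shows $S^{-1}R$ is an fqp-ring.

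The main obstacle — really the only non-formal point — is step (1): the commutation $\Ann_{S^{-1}R}(S^{-1}I)=S^{-1}\Ann_R(I)$. The inclusion $\supseteq$ is immediate, and for $\subseteq$ one uses that $I=(a_1,\dots,a_n)R$ so a fraction $r/s$ annihilates $S^{-1}I$ iff $(r/s)(a_i/1)=0$ in $S^{-1}R$ for each $i$, which gives $t_i r a_i=0$ in $R$ for some $t_i\in S$; then $t=t_1\cdots t_n\in S$ satisfies $tra_i=0$ for all $i$, so $tr\in\Ann_R(I)$ and $r/s=(tr)/(ts)\in S^{-1}\Ann_R(I)$. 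Everything else is a routine assembly of standard facts about localization (exactness, flatness, preservation of projectivity and of free modules) together with the two directions of Lemma \ref{F4}, so the write-up should be short.
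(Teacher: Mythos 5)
Your proposal is correct and takes essentially the same route as the paper: reduce to Lemma \ref{F4}, identify $\Ann(S^{-1}I)$ with $S^{-1}\Ann(I)$ using finite generation of $I$ (the paper cites \cite[Proposition 3.14]{AM} where you verify it by hand), and invoke preservation of projectivity under localization (the paper cites \cite[Theorem 3.76]{Ro}), then apply Lemma \ref{F4} in reverse. No substantive difference.
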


\begin{proof}
Let $J$ be a finitely generated ideal of $S^{-1}R$. Then
$J=S^{-1}I$ for some finitely generated ideal $I$ of $R$. Since
$R$ is an fqp-ring, $I$ is quasi-projective and hence, by Lemma
\ref{F4}, $I$ is projective over $\frac{R}{\Ann(I)}$. By
{\cite[Theorem 3.76]{Ro}}, $J:=S^{-1}I$ is projective over
$\frac{S^{-1}R}{S^{-1}\Ann(I)}$. But
$S^{-1}\Ann(I)=\Ann(S^{-1}I)=\Ann(J)$ by {\cite[Proposition
3.14]{AM}}. Therefore $J:=S^{-1}I$ is projective over
$\frac{S^{-1}R}{\Ann(S^{-1}I)}$. Again by Lemma \ref{F4}, $J$ is
quasi-projective. It follows that $S^{-1}R$ is an fqp-ring.
\end{proof}

\begin{lemma} [{\cite[Lemma 3.8]{AJK}}] \label{F6} Let $R$ be a local ring and $a,\ b$
two nonzero elements of $R$ such that $(a)$ and $(b)$ are
incomparable. If $(a,\ b)$ is quasi-projective, then $(a)\cap
(b)=0$, $a^{2}=b^{2}=ab=0$, and $\Ann(a)=\Ann(b)$.
\end{lemma}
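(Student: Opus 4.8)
The plan is to extract the structural consequences of quasi-projectivity of $(a,b)$ by combining Lemma~\ref{F4} with Lemma~\ref{F41} and Lemma~\ref{B4}. First I would invoke Lemma~\ref{F41} applied to the cyclic summands: if $(a,b)$ is quasi-projective, then writing $I:=(a,b)$ and using Lemma~\ref{F4}, $I$ is projective over $\overline{R}:=R/\Ann(I)$; but since $R$ is local, $\overline{R}$ is local and a finitely generated projective module over a local ring is free. So $I$ is a free $\overline{R}$-module. The incomparability of $(a)$ and $(b)$ should force the rank to be exactly $2$: a rank $\le 1$ free module is cyclic over $\overline{R}$, hence $I$ would be generated over $R$ by a single element, contradicting that neither of $(a)$, $(b)$ contains the other. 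Thus $I\cong \overline{R}^{2}$ with $\{a,b\}$ mapping to a basis.

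From $I\cong\overline{R}\,a\oplus\overline{R}\,b$ (internal direct sum inside $I$) I would read off $(a)\cap(b)=0$ immediately: any element in the intersection is both $ra$ and $sb$, and in a direct sum that element is $0$. For the vanishing of the products, I would use that $R$ is local together with Lemma~\ref{B4} (note an fqp-ring is Gaussian by the second implication of Theorem~\ref{F3}, which is proved independently, so Lemma~\ref{B4} applies): $(a,b)^{2}=(a^{2})$ or $(b^{2})$, say $(a,b)^{2}=(a^{2})$. Then $ab=ca^{2}$ and $b^{2}=da^{2}$ for some $c,d\in R$. But $ab\in(a)\cap(b)=0$, so $ab=0$; and $b^{2}=da^{2}$ gives $b^{3}=d a^{2}b=da(ab)=0$, and then $b\cdot b^{2}=0$ combined with the freeness — more directly, $b^{2}=da^{2}\in(a)$ while $b^{2}\in(b)$, so $b^{2}\in(a)\cap(b)=0$, hence $b^{2}=0$; symmetrically, since $(a,b)^{2}=(a^2)$ we get $a^2\in (a,b)^2$ and must check $a^2=0$ using that $a^2 = ab\,r + b^2 s$-type relations from $(a^2,b^2)=(ab,a^2+b^2)$ in Lemma~\ref{B4}, which together with $ab=b^2=0$ forces $a^2=0$. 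So $a^{2}=b^{2}=ab=0$.

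Finally, for $\Ann(a)=\Ann(b)$: by symmetry it suffices to show $\Ann(a)\subseteq\Ann(b)$. Here is where I expect the main obstacle — the direct-sum decomposition of $I$ is over $\overline{R}=R/\Ann(I)$, and $\Ann(I)=\Ann(a)\cap\Ann(b)$, so I cannot directly conclude $\Ann(a)=\Ann(b)$ from freeness alone. The idea is: since $I$ is free over $\overline{R}$ with basis images of $a,b$, the annihilator of the element $\bar a\in I$ as an $\overline R$-module is $0$, i.e. $\Ann_R(a)\subseteq\Ann(I)=\Ann(a)\cap\Ann(b)\subseteq\Ann(b)$; symmetrically $\Ann(b)\subseteq\Ann(a)$, giving equality. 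The delicate point to get right is that ``$I$ free of rank $2$ over $\overline R$ with $\bar a,\bar b$ a basis'' genuinely implies $\Ann_{\overline R}(\bar a)=0$, which is exactly freeness on that coordinate; once that is pinned down, the annihilator statement follows. I would present the rank argument carefully (ruling out rank $0$ and rank $1$ via incomparability and $a,b\ne 0$) as the technical heart, then let the three consequences drop out as above.
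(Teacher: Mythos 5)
Your overall route --- reduce to freeness over $\overline{R}:=R/\Ann(I)$ via Lemma \ref{F4}, rule out rank $\le 1$ by incomparability, and read the conclusions off the basis $\{a,b\}$ --- is genuinely different from the paper's and, where you carry it out, correct. The paper instead invokes a lemma of Tuganbaev producing endomorphisms $f_1+f_2=1_I$ with $f_1(I)\subseteq(a)$ and $f_2(I)\subseteq(b)$, deduces $(a)\cap(b)=0$ from a unit computation in the local ring, and then uses the Fuller--Hill criterion (Lemma \ref{F41}) together with an explicit lifting of a map $(b)\to (a)/a\Ann(b)$ to get $a^2=b^2=0$ and $\Ann(a)=\Ann(b)$. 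Your rank argument is sound: rank $0$ is excluded since $a\neq 0$, rank $1$ would make $I$ cyclic and hence (by Nakayama in the local ring) equal to $(a)$ or $(b)$, and a surjection $\overline{R}^2\to I$ between free modules of equal finite rank over a commutative ring is an isomorphism, so $\{a,b\}$ is a basis. From this, $(a)\cap(b)=0$ and $\Ann(a)=\Ann(I)=\Ann(b)$ follow exactly as you say.

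The genuine gap is in your treatment of $a^2=b^2=0$. First, Lemma \ref{B4} is not available here: the hypothesis of the present lemma is only that the single ideal $(a,b)$ is quasi-projective, so $R$ need not be an fqp-ring, let alone Gaussian; and even if it were, the implication ``fqp $\Rightarrow$ Gaussian'' in Theorem \ref{F3} is itself proved by citing Lemma \ref{F6}, so the appeal is circular, not independent. Second, even granting $(a,b)^2=(a^2)$, your closing step fails: once $ab=b^2=0$, the identity $(a^2,b^2)=(ab,a^2+b^2)$ collapses to $(a^2)=(a^2)$ and forces nothing about $a^2$. Fortunately the repair lies entirely inside your own framework: freeness gives $\Ann(a)=\Ann(I)=\Ann(b)$ directly (as you observe at the end), and then $ab=0$ means $a\in\Ann(b)=\Ann(a)$, i.e. $a^2=0$, and symmetrically $b^2=0$. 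If you establish the annihilator equality before the squares, Lemma \ref{B4} can be dropped entirely and your proof becomes both correct and shorter than the paper's.
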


\begin{proof}
Let $I:=(a,\ b)$ be quasi-projective. Then by {\cite[Lemma
2]{Tu}}, there exist $f_{1},\ f_{2} \in \End_{R}(I)$ such that
$f_{1}(I)\subseteq (a)$, $f_{2}(I)\subseteq (b)$, and
$f_{1}+f_{2}=1_{I}$. Now let $x\in (a)\cap (b)$. Then
$x=r_{1}a=r_{2}b$ for some $r_{1},\ r_{2}\ \in R$. But
$x=f_{1}(x)+f_{2}(x)=f_{1}(r_{1}a)+f_{2}(r_{2}b)=r_{1}f_{1}(a)+r_{2}f_{2}(b)=r_{1}a^{\prime}a+r_{2}b^{\prime}b=a^{\prime}x+b^{\prime}x$
where $a^{\prime},\ b^{\prime}\ \in R$. We claim that $a^{\prime}$
is a unit. Suppose not. Since $R$ is  local, $1-a^{\prime}$ is a unit.
But $a=f_{1}(a)+f_{2}(a)=a^{\prime}a+f_{2}(a)$. Hence
$(1-a^{\prime})a=f_{2}(a)\subseteq (b)$ which implies that $a\in
(b)$. This is absurd since $(a)$ and $(b)$ are incomparable. Similarly,
$b^{\prime}$ is a unit. It follows that
$(a^{\prime}-(1-b^{\prime}))$ is a unit. But
$x=a^{\prime}x+b^{\prime}x$ yields
$(a^{\prime}-(1-b^{\prime}))x=0$. Therefore $x=0$ and $(a)\cap
(b)=0$.

Next, we prove that $a^{2}=b^{2}=ab=0$. Obviously,  $(a)\cap
(b)=0$ implies that  $ab=0$. So it remains to prove that
$a^{2}=b^{2}=0$. Since $(a)\cap (b)=0$, $I=(a)\oplus (b)$. By
Lemma \ref{F41}, $(b)$ is $(a)$-projective. Let $\varphi:
(a)\longrightarrow \frac{(a)}{a\Ann(b)}$ be the canonical map and
$g: (b)\longrightarrow \frac{(a)}{a\Ann(b)}$ be defined by
$g(rb)=r\bar{a}$.  If $r_{1}b=r_{2}b$, then $(r_{1}-r_{2})b=0$.
Hence $r_{1}-r_{2} \in \Ann(b)$ which implies that
$(r_{1}-r_{2})\bar{a}=0$. So $g(r_{1}b)=g(r_{2}b)$. Consequently,
$g$ is well defined. Clearly $g$ is an $R$-map. Now, since $(b)$
is $(a)$-projective, there exists an $R$-map $f:
(b)\longrightarrow (a)$ with $\varphi \circ f=g$. For $b$, we have
$f(b)\in (a)$, hence $f(b)=ra$ for some $r\in R$. Also $(\varphi
\circ f)(b)=g(b)$. Hence $f(b)-a\in a\Ann(b)$. Whence  $ra-a=at$
for some  $t\in \Ann(b)$ which implies that $(t+1)a=ra$. By
multiplying the last equality by $a$ we obtain,
$(t+1)a^{2}=ra^{2}$. But $ab=0$ implies $0=f(ab)=af(b)=ra^{2}$.
Hence $(t+1)a^{2}=0$. Since $t\in \Ann(b)$ and $R$ is local,
$(t+1)$ is a unit. It follows that $a^{2}=0$. Likewise $b^{2}=0$.

Last, let $x\in \Ann(b)$. Then $f(xb)=xra=0$. The above equality
$(t+1)a=ra$ implies $(t+1-r)a=0$. But $t+1$ is a unit and $R$ is
local. So that $r$ is a unit ($b\neq 0$). Hence $xa=0$. Whence
$x\in \Ann(a)$ and $\Ann(b)\ \subseteq\ \Ann(a)$. Similarly we can
show that $\Ann(a)\ \subseteq\ Ann(b)$. Therefore $\Ann(a)\ =\
\Ann(b)$.
\end{proof}

\textbf{Proof of Theorem \ref{F3}.}
$R$ arithmetical $\Rightarrow$ $R$ fqp-ring.

Let $R$ be an arithmetical ring, $I$ a nonzero finitely generated
ideal of $R$, and $p$ a prime ideal of $R$. Then $I_{p}:=IR_{p}$
is finitely generated. But $R$ is arithmetical, hence $R_{p}$ is a
chained ring and $I_{p}$ is a principal ideal of $R_{p}$. By
{\cite{Ko}}, $I_{p}$ is quasi-projective. By {\cite[19.2]{W1}} and
{\cite{W2}}, it suffices to prove that $(\Hom_{R}(I,\ I))_{p}\cong
\Hom_{R_{p}}(I_{p},\ I_{p})$. But $ \Hom_{R_{p}}(I_{p},\
I_{p})\cong \Hom_{R}(I,\ I_{p})$ by the adjoint isomorphisms
theorem {\cite[Theorem 2.11]{Ro}} (since $
\Hom_{S^{-1}R}(S^{-1}N,S^{-1}M)\cong \Hom(N,S^{-1}M)$ where
$S^{-1}N\cong N\bigotimes _{R}S^{-1}R$ and $S^{-1}M \cong
\Hom_{S^{-1}R}(S^{-1}R,S^{-1}M)$). So let us prove that
$$(\Hom_{R}(I,\ I))_{p}\cong \Hom_{R}(I,\ I_{p}).$$ Let $$\phi
:(\Hom_{R}(I,\ I))_{p} \longrightarrow \Hom_{R}(I,\ I_{p})$$ be
the function defined by  $\frac{f}{s}\in (\Hom_{R}(I,\ I))_{p}$,
$\phi(\frac{f}{s}):I\longrightarrow I_{p}$ with
$\phi(\frac{f}{s})(x)=\frac{f(x)}{s}$, for each  $x\in I$. Clearly
$\phi$ is a well-defined  $R$-map. Now suppose that
$\phi(\frac{f}{s})=0$. $I$ is finitely generated, so let
$I=(x_{1},\ x_{2}, ...,\ x_{n})$, where $n$ is an integer. Then
for every $i\in \{1,\ 2,\ ...,\ n\}$,
$\phi(\frac{f}{s})(x_{i})=\frac{f(x_{i})}{s}=0$, whence there
exists $t_{i}\in R\setminus p$ such that $t_{i}f(x_{i})=0$. Let
$t:=t_{1}t_{2}...t_{n}$. Clearly, $t\in R\setminus p$ and
$tf(x)=0$, for all $x\in I$. Hence $\frac{f}{s}=0$. Consequently,
$\phi$ is injective. Next, let $g\in \Hom_{R}(I,\ I_{p})$. Since
$I_{p}$ is principal in $R_{p}$, $I_{p}=aR_{p}$ for some $a\in I$.
But $g(a)\in I_{p}$. Hence $g(a)=\frac{ca}{s}$ for some $c\in R$
and $s\in R \setminus p$. Let $x\in I$. Then $\frac{x}{1}\in
I_{p}=aR_{p}$. Hence $\frac{x}{1}=\frac{ra}{u}$ for some $r\in R$
and  $u \in R\setminus p$. So there exists $t\in R\setminus p$
such that $tux=tra$. Now, let $f:I\longrightarrow I$ be the
multiplication by $c$. (i.e., for $x\in I$,  $f(x)=cx$). Then
$f\in \Hom_{R}(I,\ I)$ and we have
$$\phi(\frac{f}{s})(x)=\frac{f(x)}{s}=\frac{cx}{s}=\frac{c}{s}\frac{x}{1}=\frac{cra}{su}=\frac{r}{u}g(a)=\frac{1}{tu}g(tra)=\frac{1}{tu}g(txu)=g(x).$$
Therefore $\phi$ is surjective and hence  an isomorphism, as
desired.

$R$ fqp-ring $\Rightarrow$ $R$ Gaussian

Recall that, if $(R,\m)$ is a local ring with maximal ideal $\m$,
then $R$ is a Gaussian ring if and only if for any two elements
$a$, $b$ in $R$, $(a, b)^{2} = (a^{2})\ \mbox{or}\ (b^{2})$ and if
$(a, b)^{2} = (a^{2})$ and $ab = 0$, then $b^{2} = 0$
{\cite[Theorem 2.2 (d)]{BG2}}.

Let $R$ be an fqp-ring and let $P$ be any prime ideal of $R$. Then
by Lemma \ref{F5} $R_{p}$ is a local fqp-ring. Let $a,\ b\ \in
R_{P}$. We investigate two cases. The first case is $(a,\ b)=(a)$
or $(b)$, say $(b)$. So $(a,\ b)^{2}=(b^{2})$. Now assume that
$ab=0$. Since $a\in (b)$, $a=cb$ for some $c\in R$. Therefore
$a^{2}=cab=0$. The second case is $I:=(a,\ b)$ with $I\neq (a)$
and $I\neq (b)$. Necessarily, $a\neq 0$ and $b\neq 0$.
 By Lemma \ref{F6}, $a^{2}=b^{2}=ab=0$. Both cases satisfy the conditions that were mentioned at the beginning of this proof (The conditions  of
{\cite[Theorem 2.2 (d)]{BG2}}). Hence $R_{p}$ is Gaussian. But $p$
being an arbitrary prime ideal of $R$ and the Gaussian notion
being a local property, then $R$ is Gaussian.

To prove that the implications are irreversible in general, we
will use  the following theorem to build examples for this
purpose.

\begin{thm}[{\cite[Theorem 4.4]{AJK}}] \label{F7}
Let $(A,\ \m)$ be a local ring and $E$ a nonzero
$\frac{A}{\m}$-vector space. Let $R:=A \ltimes E$ be the trivial
ring extension of $A$ by $E$. Then $R$ is an fqp-ring if and only
if $\m^{2}=0$.
\end{thm}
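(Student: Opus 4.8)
The plan is to prove both implications by exploiting the structural description of finitely generated ideals of $R = A \ltimes E$ together with Lemma \ref{F4} (a finitely generated module is quasi-projective iff it is projective over the quotient by its annihilator) and Lemma \ref{F6} (the structure of incomparable pairs of cyclic ideals in a local fqp-ring). First I would handle the easier direction: assume $R$ is an fqp-ring and derive $\m^2 = 0$. Pick $a \in \m$, $a \neq 0$ (if $\m = 0$ there is nothing to prove since then $A$ is a field and $\m^2 = 0$ trivially, though note $E \neq 0$ forces the nilpotent ideal $0 \ltimes E$ to still be present — but $(0,0)$ is the relevant square here). Consider the element $(a,0) \in R$ and a nonzero $e \in E$, so $(0,e) \in R$; since $\m E = 0$ these generate cyclic ideals $(a,0)R$ and $(0,e)R$ that one checks are incomparable (neither contains the other, because $(a,0)(a',e') = (aa', ae') = (aa', 0)$ can never equal $(0,e)$ with $e\neq 0$, and $(0,e)(a',e') = (0,a'e)$ can never equal $(a,0)$ with $a \neq 0$). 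The ideal $\big((a,0),(0,e)\big)$ is finitely generated, hence quasi-projective by hypothesis, so Lemma \ref{F6} applies and yields in particular $(a,0)^2 = 0$, i.e. $(a^2, 0) = 0$, so $a^2 = 0$. Since $\m$ is generated by such elements and squares of elements of $\m$ vanish, together with the fact that $ab = 0$ for incomparable $a, b \in \m$ (also from Lemma \ref{F6}) and comparable elements $b = ca$ giving $ab = ca^2 = 0$, one concludes $\m^2 = 0$. The subtlety to be careful about: Lemma \ref{F6} is stated for nonzero elements with incomparable cyclic ideals, so one must ensure the chosen pair genuinely is incomparable in $R$, which is where the vector-space structure $\m E = 0$ is used.

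For the converse, assume $\m^2 = 0$ and let $J$ be a finitely generated ideal of $R$; I must show $J$ is quasi-projective, and by Lemma \ref{F4} it suffices to show $J$ is projective over $R/\Ann(J)$. The key step is to describe $J$ explicitly. Since $\m^2 = 0$, one has $\m \ltimes E$ is an $(A/\m)$-vector space (it is annihilated by $\m \ltimes E$ itself, as $(\m \ltimes E)^2 = \m^2 \ltimes \m E = 0$). I would split into cases according to whether $J \subseteq \m \ltimes E$ or not. If $J \not\subseteq \m \ltimes E$, then $J$ contains a unit of $R$ (an element $(a,e)$ with $a \notin \m$, which is invertible), so $J = R$, which is free, hence projective over $R/\Ann(R) = R$. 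If $J \subseteq \m \ltimes E$, then $J$ is a finitely generated submodule of the $(R/\m\ltimes E)$-vector space $\m \ltimes E$, i.e. an $(A/\m)$-vector space of finite dimension; since it is annihilated by the ideal $\m \ltimes E$, we get $\Ann(J) \supseteq \m \ltimes E$, so $R/\Ann(J)$ is a quotient of the field $R/(\m \ltimes E) \cong A/\m$, hence is either that field (if $J \neq 0$, so $\Ann(J) = \m \ltimes E$ exactly, as a nonzero element of $J$ is not killed by a unit) or the zero ring (if $J = 0$). In the first case $J$ is a finite-dimensional vector space over the field $R/\Ann(J)$, hence free, hence projective over $R/\Ann(J)$; in the degenerate case $J = 0$ is trivially quasi-projective.

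The main obstacle I anticipate is the bookkeeping in the forward direction: one must verify carefully that enough incomparable pairs of cyclic ideals exist in $R$ to force all of $\m^2$ to vanish, and that the hypotheses of Lemma \ref{F6} are genuinely met (nonzero generators, incomparability), handling separately the possibility that some generator of $\m$ times $e$ interacts with the $E$-component. A cleaner alternative for that direction, which I would use if the element-chasing gets unwieldy, is to invoke the already-established implication "$R$ fqp-ring $\Rightarrow$ $R$ Gaussian" (Theorem \ref{F3}) together with Theorem \ref{T3}(2) and the characterization of Gaussian local rings via $(a,b)^2 = (a^2)$ or $(b^2)$ from \cite[Theorem 2.2]{BG2}; but applied directly to pairs coming from $\m$ and $E$, combined with Lemma \ref{F6}'s conclusion $a^2 = b^2 = ab = 0$, this still routes through the same incomparability check. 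The converse direction, by contrast, I expect to be essentially formal once the case split $J \subseteq \m \ltimes E$ versus $J = R$ is set up, since $\m^2 = 0$ collapses everything below $\m \ltimes E$ into vector spaces over a field.
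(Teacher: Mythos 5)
Your proposal is correct, and the backward direction is essentially the paper's own argument: from $\m^{2}=0$ one gets $(\m\ltimes E)^{2}=0$, so every nonzero proper finitely generated ideal is annihilated by the maximal ideal, is therefore a finite-dimensional vector space over the residue field $R/(\m\ltimes E)\cong A/\m$, hence free and projective over $R/\Ann(J)$, and Lemma \ref{F4} finishes (this is exactly Lemma \ref{F10}, which the paper proves separately and then invokes). The forward direction, however, is where you genuinely diverge. The paper uses the same incomparable pair $(a,0)$, $(0,e)$ only to conclude that $R$ is not a chained ring; it then invokes Lemma \ref{F9} to get $\Ze(R)=\Nil(R)$, checks that $\m\ltimes E=\Ze(R)$, and finally applies the Singh--Mohammad result (Lemma \ref{F8}) that a non-chained local fqp-ring satisfies $(\Nil(R))^{2}=0$, from which $(\m\ltimes E)^{2}=0$ and $\m^{2}=0$ follow. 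You instead apply Lemma \ref{F6} directly: the pair $(a,0)$, $(0,e)$ gives $a^{2}=0$ for every nonzero $a\in\m$, and then $ab=0$ for all $a,b\in\m$ by splitting into the comparable case ($b=ca$ forces $ab=ca^{2}=0$) and the incomparable case (where $(a,0)R=aA\times 0$ and $(b,0)R=bA\times 0$ are incomparable in $R$ precisely when $(a)$ and $(b)$ are in $A$, so Lemma \ref{F6} yields $ab=0$). Your incomparability checks are sound because $\m E=0$. What your route buys is self-containment: it avoids the external black box of Lemma \ref{F8} entirely and needs only Lemma \ref{F6}. What the paper's route buys is economy within the survey: Lemmas \ref{F8} and \ref{F9} are needed anyway for Theorem \ref{F13}, and identifying $\m\ltimes E$ with $\Nil(R)=\Ze(R)$ gives structural information beyond the bare equality $\m^{2}=0$. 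Note also that the case split on whether $(a)$ and $(b)$ are comparable is genuinely necessary in your argument, since $(a+b)^{2}=0$ only yields $2ab=0$; you handle this correctly.
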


The proof of this theorem depends on the following lemmas.

\begin{lemma} [{\cite[Theorem 2]{SM}}] \label{F8}  Let $R$ be a local fqp-ring which is not a chained
ring. Then $(\Nil(R))^{2}=0$.
\end{lemma}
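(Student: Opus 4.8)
The plan is to exploit the structure theory of local fqp-rings that is available once we know such a ring is not chained. Let $(R,\m)$ be a local fqp-ring that is not a chained ring. The key observation is that being non-chained produces a pair of incomparable principal ideals: since the ideals are not totally ordered, there exist $a,b\in R$ with $(a)\not\subseteq (b)$ and $(b)\not\subseteq (a)$; both $a$ and $b$ are then necessarily nonzero. The finitely generated ideal $(a,b)$ is quasi-projective by hypothesis, so Lemma \ref{F6} applies and gives us $a^{2}=b^{2}=ab=0$, $(a)\cap(b)=0$, and $\Ann(a)=\Ann(b)$. In particular $a,b\in I:=\{x\in R\mid x^{2}=0\}$ and, more to the point, $a$ and $b$ are nonzero elements of $\Nil(R)$.

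First I would reduce the claim $(\Nil(R))^{2}=0$ to showing that every element of $\Nil(R)$ squares to zero and that the product of any two nilpotents vanishes. Since $R$ is local, the nilradical is $\Nil(R)=\{x\in R\mid x\text{ nilpotent}\}$, and I would aim to show $\Nil(R)\subseteq I$ together with $I\cdot I=0$; the latter is not automatic in a general ring but should follow here from the chained-ring structure of the situation combined with quasi-projectivity. The natural route is: take an arbitrary $c\in\Nil(R)$ and compare it with the fixed incomparable pair $a,b$. If $(c)$ is comparable to both $(a)$ and $(b)$ one extracts information directly; if $(c)$ is incomparable to one of them, say to $(a)$, then $(a,c)$ is quasi-projective and Lemma \ref{F6} again yields $c^{2}=0$ and $ac=0$ and $\Ann(a)=\Ann(c)$. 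The bookkeeping of these cases — handling a nilpotent element that happens to be comparable to both $a$ and $b$ — is the part that needs care: one wants to show that even then $c^{2}=0$, perhaps by writing $c=ra$ (or $c=rb$) and using $a^{2}=0$ to get $c^{2}=r^{2}a^{2}=0$, while the cross terms $cd$ for another nilpotent $d$ are killed because both lie in the annihilator-equal family.

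The step I expect to be the main obstacle is passing from "each individual nilpotent squares to zero and annihilates $a$ (or $b$)" to the full vanishing of the product $(\Nil(R))^{2}$, i.e. controlling $cd$ for two nilpotents $c,d$ that are each comparable to $a$ but where a priori one cannot immediately apply Lemma \ref{F6} to the pair $(c,d)$. The way around this should be to observe that in a local ring comparability of $(c)$ with $(a)$ forces $c\in(a)$ or $a\in(c)$; in the first case $c=ra$ so $cd=rad$, and if $d$ also lies in $(a)$ then $cd\in(a^{2})=0$, while if instead $a\in(d)$ one swaps roles. One must make sure the finitely many cases are genuinely exhaustive and that the equalities $\Ann(a)=\Ann(b)=\Ann(c)=\cdots$ propagate so that every nilpotent annihilates every other. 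Once $\Nil(R)\subseteq I$ and $I^{2}=0$ are both established, $(\Nil(R))^{2}\subseteq I^{2}=0$ finishes the proof.
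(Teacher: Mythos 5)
The paper itself offers no proof of this lemma: it is quoted directly from Singh and Mohammad \cite[Theorem 2]{SM}, so there is no internal argument to compare yours against, and I am judging your proposal on its own terms. Your overall strategy --- fix nonzero $a,b$ with $(a)$ and $(b)$ incomparable, feed pairs of elements into Lemma \ref{F6}, and reduce the statement to showing that every element of $\Nil(R)$ squares to zero --- is viable. But the step you single out as the main obstacle is in fact the easy one: once you know $c^{2}=d^{2}=0$ for $c,d\in\Nil(R)$, either $(c)$ and $(d)$ are comparable, in which case $cd$ lies in $(c^{2})$ or in $(d^{2})$ and hence vanishes, or they are incomparable, in which case Lemma \ref{F6} applied to the quasi-projective ideal $(c,d)$ gives $cd=0$ directly; no propagation of annihilator equalities is needed.

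The genuine gap is in the step you address only tentatively: proving $c^{2}=0$ when $(c)$ is comparable to both $(a)$ and $(b)$. Your device ``write $c=ra$ and use $a^{2}=0$'' covers only the sub-case $c\in(a)$ (or $c\in(b)$); it says nothing about the sub-case $a\in(c)$ and $b\in(c)$ with $c\notin(a)\cup(b)$, and that is where the real content lies, because it is the only place where the nilpotency of $c$ must be used. One way to close it: write $a=rc$ and $b=sc$; incomparability of $(a)$ and $(b)$ forces $(r)$ and $(s)$ to be incomparable (if $r=ts$ then $a=tb$), so $r^{2}=s^{2}=0$ by Lemma \ref{F6}. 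Now compare $(c)$ with $(r)$: if they are incomparable, Lemma \ref{F6} gives $c^{2}=0$; if $c\in(r)$, then $c^{2}\in(r^{2})=0$; otherwise $r\in(c)$, say $r=u_{1}c$, whence $a=u_{1}c^{2}$ and, by the parallel analysis for $s$, also $b=v_{1}c^{2}$ with $(u_{1})$, $(v_{1})$ incomparable and $u_{1}^{2}=v_{1}^{2}=0$. Repeating the trichotomy with $(c)$ and $(u_{1})$ either forces $c^{2}=0$, which now contradicts $a=u_{1}c^{2}\neq 0$, or pushes $a$ into $(c^{3})$; iterating places $a$ in $(c^{k})$ for every $k$, contradicting $a\neq 0$ since $c$ is nilpotent. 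Hence the third branch of the first trichotomy is impossible and $c^{2}=0$. Without an argument of this kind your proof is incomplete.
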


\begin{lemma} [{\cite[Lemma 4.5]{AJK}}] \label{F9}  Let $R$ be a local fqp-ring which is not a chained
ring. Then $\Ze(R)=\Nil(R)$.
\end{lemma}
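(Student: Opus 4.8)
The plan is to show that in a local fqp-ring $(R,\m)$ that is not a chained ring, every zero divisor is nilpotent; the reverse inclusion $\Nil(R)\subseteq\Ze(R)$ is automatic since nilpotents are zero divisors (as $R$ is nonzero, $1\notin\Nil(R)$). So the real content is $\Ze(R)\subseteq\Nil(R)$. First I would extract the structural consequence of $R$ not being chained: there exist nonzero elements $a,b\in R$ with $(a)$ and $(b)$ incomparable, and then Lemma \ref{F6} applies to the finitely generated ideal $(a,b)$, giving $(a)\cap(b)=0$, $a^2=b^2=ab=0$, and $\Ann(a)=\Ann(b)$. In particular $a,b\in\Nil(R)$, and by Lemma \ref{F8} we already know $(\Nil(R))^2=0$, so $\Nil(R)$ is a nonzero ideal squaring to zero.

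Next I would take an arbitrary $z\in\Ze(R)$ and aim to place $z$ inside $\Nil(R)$. Pick $0\neq w$ with $zw=0$. Since $R$ is local, $\m$ is the unique maximal ideal and $\Ze(R)\subseteq\m$, so $z\in\m$. The idea is to compare $z$ with the elements $a,b$ produced above. Because $R$ is local, for any two of these elements one either divides the other or they generate incomparable ideals; I would run a short case analysis. If $(z)$ and $(a)$ are incomparable, Lemma \ref{F6} (applied to $(z,a)$) gives $z^2=0$, so $z\in\Nil(R)$ and we are done. If $(z)\subseteq(a)$, then $z=ra$ for some $r$, so $z^2=r^2a^2=0$, again $z\in\Nil(R)$. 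The remaining, and genuinely delicate, case is $(a)\subsetneq(z)$ (or $(b)\subsetneq(z)$): here $z$ is "larger" and we cannot immediately conclude. I expect this to be the main obstacle.

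To handle the hard case I would exploit $w$ and the annihilator equalities. From $(a)\subseteq(z)$ write $a=sz$; since $a\neq 0$ we have $s\notin\Ann(z)$, and since $a^2=0$ and $a=sz$, the element $sz$ is killed by $s$ (indeed $s(sz)=s a$, and one shows $sa=0$ using $a^2=0$ and locality as in Lemma \ref{F6}'s proof), which forces $s\in\m$. Now I would compare $z$ with $w$ and with $a$ simultaneously: $zw=0$ means $w\in\Ann(z)$, while $a=sz$ with $s\in\m$. If $(z)$ and $(w)$ are incomparable, Lemma \ref{F6} applied to $(z,w)$ yields $z^2=0$ directly. If $w\in(z)$, say $w=tz$, then from $zw=0$ we get $tz^2=0$; if $t$ is a unit then $z^2=0$, and if $t\in\m$ we can iterate/compare with $a$. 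If instead $(z)\subseteq(w)$, then $z=t'w$ and $z^2=t'wz=t'\cdot 0=0$. In every branch one reaches $z^2=0$, hence $z\in\Nil(R)$. Thus $\Ze(R)\subseteq\Nil(R)$, completing the proof.

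The key steps, in order: (1) use the non-chained hypothesis and Lemma \ref{F6} to get nonzero $a,b$ with $a^2=b^2=ab=0$ and $\Ann(a)=\Ann(b)$; (2) recall $(\Nil(R))^2=0$ from Lemma \ref{F8}, so $\Nil(R)$ is the obstruction module we must capture $z$ in; (3) for $z\in\Ze(R)$ with $zw=0$, observe $z\in\m$ and run the dichotomy "$(z)$ comparable or incomparable with $a$ (or with $w$)"; (4) in the comparable-and-$z$-is-bigger case, use $zw=0$ together with locality and the relations $a=sz$, $s\in\m$, to force $z^2=0$. The crux is step (4): showing that being a zero divisor plus the rigidity coming from $(a)\cap(b)=0$ and $\Ann(a)=\Ann(b)$ is enough to annihilate $z^2$, even when $z$ sits above the "small" nilpotents $a,b$ in the divisibility order.
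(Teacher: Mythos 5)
Your setup matches the paper's: the reverse inclusion is automatic, the non-chained hypothesis produces nonzero $a,b$ with $(a)$, $(b)$ incomparable, Lemma \ref{F6} gives $a^{2}=b^{2}=ab=0$ and $\Ann(a)=\Ann(b)$, and for $z\in\Ze(R)$ with $zw=0$, $w\neq 0$, the branches ``$(z)$ incomparable with $(a)$'' and ``$(z)\subseteq(a)$'' are handled exactly as in the paper. But your hard case is not closed. In the branch $(a)\subsetneq(z)$, $w=tz$ with $t\in\m$, you only obtain $tz^{2}=0$ and then say ``we can iterate/compare with $a$''; that is not an argument, and comparing $(w)$ with $(a)$ alone cannot finish: incomparability gives $\Ann(w)=\Ann(a)$, hence $za=0$, i.e.\ $sz^{2}=0$; the inclusion $(w)\subseteq(a)$ gives $usz^{2}=0$; neither yields $z^{2}=0$. (Your parenthetical justification that $s\in\m$ via ``$sa=0$'' is also circular, though the conclusion is immediate from the strictness of $(a)\subsetneq(z)$.)

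The missing idea is that you must use \emph{both} incomparable elements. The genuinely hard case is when $z$ properly divides both $a$ and $b$: write $a=sz$, $b=s'z$ with $s,s'\in\m$. Then $(s)$ and $(s')$ are incomparable (if $s=cs'$ then $a=cb\in(b)$, contradicting incomparability of $(a)$ and $(b)$), so Lemma \ref{F6} applies to $(s,s')$ and gives $(s)\cap(s')=0$ and $\Ann(s)=\Ann(s')$. Now compare $w$ with these cofactors: if $(w)$ and $(s)$ are incomparable, then $\Ann(w)=\Ann(s)$ and $z\in\Ann(w)$ forces $a=sz=0$, absurd; if $(w)\subseteq(s)$, then $(w)\cap(s')\subseteq(s)\cap(s')=0$, so $(w)$ and $(s')$ are incomparable and the same argument kills $b$; if $(s)\subseteq(w)$, then $s=r'w$ and $a=sz=r'wz=0$, absurd. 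So this case cannot occur, and $z$ is caught by one of the earlier branches. This is precisely the paper's argument (its $x',y',t$ are your $s,s',w$); without it your case analysis has a branch in which $z^{2}=0$ is never established.
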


\begin{proof}
 We always have $\Nil(R)\subseteq \Ze(R)$. Now, let $s\in \Ze(R)$.
Then there exists $t\neq0 \in R$ such that $st=0$. Since $R$ is
not chained, there exist  nonzero elements $x,\ y\ \in R$ such
that $(x)$ and $(y)$ are incomparable. By Lemma \ref{F6},
$x^{2}=xy=y^{2}=0$. Either $(x)$ and $(s)$ are incomparable and
hence, by Lemma \ref{F6}, $s^{2}=0$. Whence $s\in \Nil(R)$. Or
$(x)$ and $(s)$ are comparable. In this case, either $s=rx$ for
some $r\in R$ which implies that $s^{2}=r^{2}x^{2}=0$ and hence
$s\in \Nil(R)$. Or $x=sx^{\prime}$ for some $x^{\prime} \in R$.
Same arguments applied to $(s)$ and $(y)$ yield either $s\in
\Nil(R)$ or $y=sy^{\prime}$ for some $y^{\prime} \in R$. Since
$(x)$ and $(y)$ are incomparable, $(x^{\prime})$ and
$(y^{\prime})$ are incomparable. Hence, by Lemma \ref{F6},
$(x^{\prime})\cap(y^{\prime})=0$. If $(x^{\prime})$ and $(t)$ are
incomparable, then by Lemma \ref{F6}, $\Ann(x^{\prime})=\Ann(t)$.
So that $s\in \Ann(x^{\prime})$ which implies that
$x=sx^{\prime}=0$, absurd. If $(t)\subseteq (x^{\prime})$, then
$(t)\cap(y^{\prime})\ \subseteq (x^{\prime})\cap(y^{\prime})=0$.
So $(t)$ and $(y^{\prime})$ are incomparable, whence similar
arguments as above yield $y=0$, absurd. Last, if $(x^{\prime})
\subseteq (t)$, then $x^{\prime}=r^{\prime}t$ for some $r^{\prime}
\in R$. Hence $x=sx^{\prime}=str^{\prime}=0$, absurd. Therefore
all the possible cases lead to  $s\in \Nil(R)$. Consequently,
$\Ze(R)=\Nil(R)$.\end{proof}
\begin{lemma}[{\cite[Lemma 4.6]{AJK}}] \label{F10}
Let $(R,\ \m)$ be a local ring such that  $\m^{2}=0$. Then $R$ is
an fqp-ring.
\end{lemma}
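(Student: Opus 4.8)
The plan is to invoke Lemma \ref{F4}, which reduces quasi-projectivity of a finitely generated module to projectivity over the quotient by its annihilator. So I would let $I$ be an arbitrary finitely generated ideal of $R$ and aim to show that $I$ is projective over $R/\Ann(I)$; this suffices since $I$ is finitely generated over $R$.

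First I would dispose of the trivial cases. If $I=0$ there is nothing to prove, and if $I\not\subseteq\m$ then $I$ contains a unit, so $I=R$ is free and hence quasi-projective. Thus I may assume $0\neq I\subseteq\m$. Next, the hypothesis $\m^{2}=0$ gives $\m I\subseteq\m^{2}=0$, so $\m\subseteq\Ann(I)$; since $I\neq 0$ we have $\Ann(I)\neq R$, and maximality of $\m$ forces $\Ann(I)=\m$. Consequently $R/\Ann(I)=R/\m$ is a field, and $I$, being annihilated by $\m$, is naturally an $R/\m$-vector space; being finitely generated over $R$ it is finite-dimensional, hence free — a fortiori projective — as an $R/\m$-module. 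By Lemma \ref{F4}, $I$ is quasi-projective, and as $I$ was arbitrary, $R$ is an fqp-ring.

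I do not anticipate a genuine obstacle here: the entire content is the observation that $\m^{2}=0$ forces every finitely generated ideal contained in $\m$ to be a residue-field vector space, where finite generation and the structure theory of vector spaces trivialize projectivity. The only points requiring a modicum of care are the correct identification $\Ann(I)=\m$ (which uses both $I\neq 0$ and the maximality of $\m$), the separate handling of the unit ideal, and checking that the finite-generation hypothesis of Lemma \ref{F4} is met — all of which are immediate.
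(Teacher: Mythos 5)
Your proof is correct and follows essentially the same route as the paper: reduce to a nonzero proper finitely generated ideal $I$, use $\m^{2}=0$ to get $\Ann(I)=\m$, observe that $I$ is then a finite-dimensional vector space over the residue field (hence free, hence projective over $R/\Ann(I)$), and conclude via Lemma \ref{F4}. Your explicit handling of the trivial cases and the justification that $\Ann(I)\neq R$ are welcome bits of care that the paper leaves implicit.
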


\begin{proof}
Let $I$ be a nonzero proper finitely generated ideal of $R$. Then
$I\subseteq \m$ and $\m I=0$. Hence $\m \subseteq \Ann(I)$, whence
$\m =\Ann(I)$ $(I\neq 0)$. So that $\frac{R}{\Ann(I)}\cong
\frac{A}{\m}$ which implies that $I$ is a free
$\frac{R}{\Ann(I)}$-module, hence projective over
$\frac{R}{\Ann(I)}$. By Lemma \ref{F4}, $I$ is quasi-projective.
Consequently, $R$ is an fqp-ring.
\end{proof}

\textbf{Proof of Theorem \ref{F7}.}
Assume that $R$ is an fqp-ring. We may suppose that $A$ is not a
field. Then $R$ is not a chained ring since $((a,\ 0)$ and $((0,\
e))$ are incomparable where $a\neq 0\ \in \m$ and $e=(1,\ 0,\ 0,\
...)\ \in E$. Also $R$ is local with maximal $\m \ltimes E$. By
Lemma \ref{F9}, $\Ze(R)=\Nil(R)$. But $\m \ltimes E=\Ze(R)$. For,
let $(a,e)\ \in \m \ltimes E$. Since $E$ is an
$\frac{A}{\m}$-vector space, $(a,e)(0,e)=(0,ae)=(0,0)$. Hence $\m
\ltimes E\subseteq \Ze(R)$. The other inclusion holds since
$\Ze(R)$ is an ideal. Hence $\m \ltimes E=\Nil(R)$. By Lemma
\ref{F8}, $(\Nil(R))^{2}=0= (\m \ltimes E)^{2}$. Consequently,
$\m^{2}=0$.

Conversely, $\m^{2}=0$ implies $(\m \ltimes E)^{2}=0$ and hence by
Lemma \ref{F10}, $R$ is an fqp-ring. $\Box$
\bigskip

Now we can use Theorem \ref{F7} to construct examples which prove
that the implications in Theorem \ref{F3} cannot be reversed in
general. The following is an example of an fqp-ring which is not
an arithmetical ring
\begin{example} \rm \label{F11}

$R:=\frac{\mathbb{R}[X]}{(X^{2})} \ltimes \mathbb{R}$ is an
fqp-ring by Theorem \ref{F7}, since $R$ is   local  with a
nilpotent maximal ideal $\frac{(X)}{(X^{2})} \ltimes \mathbb{R}$.
Also, since $\frac{\mathbb{R}[X]}{(X^{2})}$ is not a field, $R$ is
not arithmetical by Theorem \ref{T3}.
\end{example}

The following is an example of a Gaussian ring which is not an
fqp-ring.

\begin{example} \rm \label{F12}

$R:=\mathbb{R}[X]_{(X)} \ltimes \mathbb{R}$  is Gaussian by
Theorem \ref{T3}. Also, by Theorem \ref{F7}, $R$ is not an
fqp-ring.
\end{example}

Now the natural question is what are the values of the weak global
dimension of an arbitrary fqp-ring$?$  The answer is given by the
following theorem.
\begin{thm} [{\cite[Theorem 3.11]{AJK}}] \label{F13}
Let $R$ be an fqp-ring. Then  $\wgdim(R)=$ 0, 1, or $\infty$.
\end{thm}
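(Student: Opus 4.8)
The plan is to mimic the reduction strategy used for arithmetical rings (Theorem~\ref{A3}) and for Gaussian rings, exploiting the structural dichotomy available for fqp-rings: by Lemma~\ref{F8} (and the proof of Theorem~\ref{F7}), a local fqp-ring is \emph{either} a chained ring \emph{or} a ring whose nilradical squares to zero and coincides with its zero-divisor set. First I would reduce to the local case: since $\wgdim(R)=\sup_{\m}\wgdim(R_{\m})$ over maximal ideals $\m$ \cite[Theorem 1.3.14(1)]{G1}, and each $R_{\m}$ is again an fqp-ring by Lemma~\ref{F5}, it suffices to show $\wgdim(R)\in\{0,1,\infty\}$ for $R$ local. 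So assume $(R,\m)$ is a local fqp-ring with $\wgdim(R)<\infty$; the goal is to force $\wgdim(R)\le 1$.

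Next I would split into the two cases of the dichotomy. \emph{Case 1: $R$ is a chained ring.} Then $R$ is in particular arithmetical, so Osofsky's Theorem~\ref{A3} (equivalently Theorem~\ref{A4}) applies directly and gives $\wgdim(R)=0,1,$ or $\infty$; combined with the finiteness assumption this yields $\wgdim(R)\le 1$. \emph{Case 2: $R$ is not chained.} Then by the arguments in the proof of Theorem~\ref{F7}, $\Nil(R)=\Ze(R)$ and $(\Nil(R))^{2}=0$. If $\Nil(R)\ne 0$, I claim $\wgdim(R)=\infty$, contradicting finiteness: indeed, one expects to produce a finitely generated ideal $I$ with $\Ann(I)=\Nil(R)$ (e.g.\ $I$ generated by two incomparable nilpotents $a,b$ with $a^2=b^2=ab=0$ and $\Ann(a)=\Ann(b)=\Nil(R)$, as supplied by Lemma~\ref{F6}), so that $I$ is a nonzero $R/\Nil(R)$-vector space, hence $I\cong\bigoplus R/\Nil(R)$ as $R$-modules, and then $\wdim_R(I)=\wdim_R(R/\Nil(R))$; since $\Nil(R)$ is a nonzero nilpotent ideal of the local ring $R$ and equals $\m$ only when $R$ is as in Corollary~\ref{A7}, an application of Lemma~\ref{B9} / Proposition~\ref{B10} (or the iteration $\wdim(R/\Nil(R))=\wdim(\Nil(R))+1$ together with $\Nil(R)^2=0$ forcing $\Nil(R)$ to be a direct sum of copies of $R/\m$, hence $\wdim(\Nil(R))=\wdim(R/\m)$) produces $\wdim=\infty$. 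Therefore under the finiteness hypothesis we must have $\Nil(R)=0$, i.e.\ $R$ is reduced.

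Once $R$ is reduced and local, I would invoke the Gaussian side: by Theorem~\ref{F3} every fqp-ring is Gaussian, so $R$ is a reduced Gaussian ring, and Lemma~\ref{B5} gives exactly $\wgdim(R)\le 1$. Globalizing back (the reduction step is reversible since the conclusion $\wgdim\le 1$ is checked locally via \cite[Corollary 4.2.6]{G1}), we conclude that an fqp-ring of finite weak global dimension has $\wgdim\le 1$, which together with the possibility $\wgdim=\infty$ proves the theorem.

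The step I expect to be the main obstacle is the argument in Case~2 that a \emph{nonzero} nilradical forces infinite weak global dimension. Proposition~\ref{B10} already handles the case where the \emph{maximal} ideal itself is nonzero nilpotent, but here $\Nil(R)$ need not equal $\m$, so I cannot apply it verbatim; the work is to show that the quotient $R/\Nil(R)$ (or the ideal $\Nil(R)$) still witnesses infinite flat dimension. The cleanest route is probably: pick $0\ne a\in\Nil(R)$, use Lemma~\ref{F6}-type control of annihilators to realize $aR$ as a module over $R/\Nil(R)$, run the short exact sequence $0\to\Nil(R)\to R\to R/\Nil(R)\to 0$ through Lemma~\ref{B8} and Lemma~\ref{B9} to get $\wdim(R/\Nil(R))=\wdim(\Nil(R))+1$, and then use $\Nil(R)^2=0$ to identify $\Nil(R)$ itself as an $R/\m$-vector space, so $\wdim(\Nil(R))=\wdim(R/\m)=\wdim(\Nil(R))+1$, whence $\infty$. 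Verifying that all the annihilators line up so that these isomorphisms of $R$-modules genuinely hold is the delicate bookkeeping; everything else is a routine reduction or a direct citation of earlier lemmas.
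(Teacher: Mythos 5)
Your overall architecture is the same as the paper's: reduce to the local case via $\wgdim(R)=\sup_{\m}\wgdim(R_{\m})$ and Lemma~\ref{F5}, dispose of the reduced case with Lemma~\ref{B5} (using that fqp implies Gaussian, Theorem~\ref{F3}), and in the non-reduced case use the dichotomy of Lemma~\ref{F8}: either $R$ is a chained ring with zero divisors (Osofsky, Theorem~\ref{A4}) or $(\Nil(R))^{2}=0$ with $\Nil(R)\neq 0$. So the skeleton is correct and matches the paper.

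The step you single out as the ``main obstacle'' is, however, not an obstacle at all, and your substitute argument for it is where the proposal goes wrong. Theorem~\ref{B7} is stated precisely for a Gaussian ring with a maximal ideal $\m$ such that $\Nil(R_{\m})$ is a \emph{nonzero nilpotent ideal}; it does not require $\Nil(R)=\m$. Its proof already performs the reduction you are trying to reconstruct: since the primes of a local Gaussian ring are linearly ordered, $\Nil(R)$ is prime, and localizing at it produces a local ring whose maximal ideal is nonzero and nilpotent, to which Proposition~\ref{B10} applies; then $\wgdim(R)\geq\wgdim(R_{\Nil(R)})=\infty$. So in your Case~2 you should simply cite Theorem~\ref{B7} verbatim. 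Your proposed direct argument is flawed on two counts: $(\Nil(R))^{2}=0$ makes $\Nil(R)$ a module over $R/\Nil(R)$, which is a domain and not in general a field, so $\Nil(R)$ is \emph{not} an $R/\m$-vector space unless $\m\Nil(R)=0$, which you have not established; and a finitely generated ideal killed by $\Nil(R)$ is an $R/\Nil(R)$-module but need not be free, so the isomorphism $I\cong\bigoplus R/\Nil(R)$ does not follow. Both gaps disappear once you route the argument through the localization at $\Nil(R)$ as in the paper's proof of Theorem~\ref{B7}. With that one substitution your proof coincides with the paper's.
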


\begin{proof}
Since $\wgdim(R)=\sup\{\wgdim(R_{p})\mid p\ \mbox{prime ideal
of}\ R\}$, one can assume that $R$ is a local fqp-ring. If $R$ is reduced, then
$\wgdim(R)\leq 1$ by Lemma \ref{B5}. If $R$ is not reduced, then $\Nil(R)\neq 0$. By
Lemma \ref{F8}, either $(\Nil(R))^{2}= 0$, in this case,
$\wgdim(R)=\infty$ by Theorem \ref{B7} (since an fqp-ring is
Gaussian); or $R$ is a chained ring with zero divisors
($\Nil(R)\neq 0$), in this case $\wgdim(R)=\infty$ by Theorem
\ref{A3}. Consequently,   $\wgdim(R)=$ 0, 1, or $\infty$.
\end{proof}

It is clear that Theorem \ref{F13} generalizes Osofsky's Theorem
on the weak global dimension of arithmetical rings (Theorem
\ref{A3}) and partially resolves  Bazzoni-Glaz Conjecture on
Gaussian rings (Conjecture 4.13).
\end{section}


\end{document}